\documentclass[11pt,letterpaper]{amsart}

\usepackage{amsmath, amscd, amssymb}
\usepackage[frame,cmtip,arrow,matrix,line,graph,curve]{xy}
\usepackage{graphpap, color}
\usepackage[mathscr]{eucal}
\usepackage{cancel}
\usepackage{verbatim}
\usepackage{adjustbox}
\usepackage{tikz}
\usepackage{float}
\usepackage{booktabs}
\usepackage{multirow}
\usepackage{tabularx}
\usepackage{bbm}
\usepackage{stmaryrd}

\usepackage{hyperref}

\setlength{\evensidemargin}{\oddsidemargin}

\numberwithin{equation}{section}

\newtheorem{theorem}{Theorem}[section]
\newtheorem{corollary}[theorem]{Corollary}
\newtheorem{proposition}[theorem]{Proposition}
\newtheorem{lemma}[theorem]{Lemma}

\newtheorem{conjecture}[theorem]{Conjecture}

\theoremstyle{definition}
\newtheorem{definition}[theorem]{Definition}
\newtheorem{question}[theorem]{Question}
\newtheorem{example}[theorem]{Example}
\newtheorem{remark}[theorem]{Remark}

\newtheorem{problem}[theorem]{Problem}

\newcommand{\Z}{\mathbb{Z}}
\newcommand{\Q}{\mathbb{Q}}

\newcommand{\PP}{\mathbb{P}}

\def\QQ{\mathbb{Q}}

\def\ZZ{\mathbb{Z}}


\def\sL{{\mathscr L}}

\def\sT{\mathscr{T}}

\newcommand{\cal}{\mathcal}

\def\cM{{\cal M}}



\def\fp{\mathfrak{p}}
\def\fq{\mathfrak{q}}
\def\fQ{\mathfrak{Q}}

\def\fc{\mathfrak{c}}
\def\fd{\mathfrak{d}}



\def\tfp{\widetilde{\fp}}
\def\tfq{\widetilde{\fq}}


\def\hbar{\overline{h}}

\def\Mbar{\overline{\cM}}


\DeclareMathOperator{\Aut}{Aut}

\DeclareMathOperator{\Exp}{Exp}
\DeclareMathOperator{\Log}{Log}

\def\Rep{\mathrm{Rep}}

\def\Ind{\mathrm{Ind} }

\def\Stab{\mathrm{Stab} }
\def\dim{\mathrm{dim} }

\def\log{\mathrm{log} }

\def\ch{\mathrm{ch} }

\def\Inv{\mathrm{Inv} }
\def\mult{\mathrm{mult}}

\def\and{\quad{\rm and}\quad}
\def\lra{\longrightarrow }

\def\beq{\begin{equation}}
\def\eeq{\end{equation}}
\def\ben{\begin{enumerate}}
\def\een{\end{enumerate}}

\def\and{\quad\text{and}\quad}

\def\a{\alpha}
\def\d{\delta}

\def\sfs{\mathsf{s}}
\def\sfp{\mathsf{p}}
\def\sfh{\mathsf{h}}
\def\sfe{\mathsf{e}}

\def\symS{\mathbb{S}}
\def\Lab{\sL}

\title[Algorithm and log-concavity of representations on  $H^*(\Mbar_{0,n})$]
{Recursive algorithm and log-concavity of representations on the cohomology of $\Mbar_{0,n}$}
\date{2024.08.21}

\author{Jinwon Choi}
\address{Department of Mathematics and Research Institute of Natural Science, Sookmyung Women's University, Seoul 04310, Korea}
\email{jwchoi@sookmyung.ac.kr}

\author{Young-Hoon Kiem}
\address{School of Mathematics, Korea Institute for Advanced Study, 85 Hoegiro, Dongdaemun-gu, Seoul 02455, Korea}
\email{kiem@kias.re.kr}

\author{Donggun Lee}
\address{Center for Complex Geometry, Institute for Basic Science (IBS), 55 Expo-ro, Yuseong-gu, Daejeon 34126, Korea}
\email{dglee@ibs.re.kr}

\thanks{JC was partially supported by NRF grant 2018R1C1B6005600. YHK was partially supported by Korea NRF grant 2021R1F1A1046556. DL was supported by the Institute for Basic Science (IBS-R032-D1).}

\setcounter{tocdepth}{1}
\begin{document}

\begin{abstract}
We provide a programmable recursive algorithm for the $\symS_n$-representations on the cohomology of the moduli spaces $\Mbar_{0,n}$ of $n$-pointed stable curves of genus 0.
As an application, we find explicit inductive and asymptotic formulas for the invariant part $H^*(\Mbar_{0,n}/\symS_n)$ and prove that its Poincar\'e polynomial is asymptotically log-concave.  
Based on numerical computations with our algorithm, we further conjecture that the sequence $\{H^{2k}(\Mbar_{0,n})\}$ of $\symS_n$-modules  is equivariantly log-concave.
\end{abstract}

\maketitle

\tableofcontents

\section{Introduction}

The moduli space $\Mbar_{0,n}$ of $n$-pointed stable rational curves, introduced in \cite{Knu}, is a central object in algebraic geometry which has been much studied.
On $\Mbar_{0,n}$, the symmetric group $\symS_n$ acts by permuting the marked points and it is natural to consider the following.

\begin{problem}\label{11}
	Compute the $\symS_n$-representations on the rational cohomology of $\Mbar_{0,n}$.
\end{problem}

In \cite[Theorem 6.1]{CKL}, we found an explicit formula for the $\symS_n$-characters of $H^*(\Mbar_{0,n})$ as a sum over  weighted rooted trees. Using this, we could provide partial answers to the question asking whether $H^{2k}(\Mbar_{0,n})$ is a permutation representation or not for each $k$. 
However, for an actual computation of the cohomology $H^{2k}(\Mbar_{0,n})$ as an $\symS_n$-module, one has to deal with the combinatorics of weighted rooted trees which quickly causes difficulties as $n$ grows. 
In this paper, we provide a programmable recursive algorithm (Theorem \ref{thm:intro1}) for the $\symS_n$-characters of $H^*(\Mbar_{0,n})$ by investigating on the inductive structure of weighted rooted trees.  The resulting formula is much simpler than the previous one in \cite{CKL} and can be easily implemented by softwares like Mathematica. 

Recently, the property of log-concavity has been much studied as it often reveals deep underlying structures like Hodge modules as in the celebrated works of June Huh. 
In \cite{ACM}, Aluffi-Chen-Marcolli proved the asymptotic ultra-log-concavity for the Betti numbers of $\Mbar_{0,n}$. As the moduli space 
$$\Mbar_{0,n}/\symS_n$$
of rational curves with \emph{unordered} marked points is also an important space, 
it seems natural to ask the following question which is equally interesting and perhaps more difficult. 

\begin{question}\label{12}
Do the Betti numbers $H^{2k}(\Mbar_{0,n}/\symS_n)$ of $\Mbar_{0,n}/\symS_n$ form a log-concave sequence?
\end{question}

Since $H^*(\Mbar_{0,n}/\symS_n)$ is the $\symS_n$-invariant part in $H^*(\Mbar_{0,n})$,
we can compute the Betti numbers of $\Mbar_{0,n}/\symS_n$ by using our algorithm (Theorem \ref{thm:intro1}). In particular, we prove an inductive formula (Theorem \ref{thm:intro2}) and an asymptotic formula (Theorem \ref{thm:intro3}) for them. 
As a consequence, we find that the Poincar\'e polynomial of $\Mbar_{0,n}/\symS_n$ is asymptotically log-concave (Corollary \ref{cor:asymp.p_k}) but not ultra-log-concave (Corollary \ref{cor:ultra.lc}). 
More generally, based on our numerical computations, we conjecture that the multiplicities of irreducible representations of $\symS_n$ in $H^*(\Mbar_{0,n})$ form log-concave sequences (Conjecture \ref{conj:mult.lc})  and that the sequence $H^{2k}(\Mbar_{0,n})$ of $\symS_n$-modules is equivariantly log-concave (Conjecture \ref{conj:equiv.lc}).

\subsection{Representations on the cohomology of $\Mbar_{0,n}$}
For more detailed discussions, let us first recall previously known results about Problem \ref{11}. 

In \cite{Get}, Getzler first developed a method to compute the characters of the $\symS_n$-modules $H^{2k}(\Mbar_{0,n})$ by using the theory of modular operads. Later, Bergstr\"om and Minabe in \cite{BM} found a recursive algorithm for the characters which requires computations on Hassett's moduli spaces of  weighted pointed stable curves \cite{Has} which are numerous for each $n$.

In \cite{CKL}, we introduced a new approach for computing the $\symS_n$-representations on $H^{*}(\Mbar_{0,n})$. Based on the theory of $\d$-stable quasimaps developed in \cite{CK}, we related the $\symS_n$-representations on $H^*(\Mbar_{0,n})$ with those on $H^*(\Mbar_{0,n+1})$, where $\Mbar_{0,n+1}$ is equipped with an $\symS_n$-action that permutes the first $n$ markings while keeping the last fixed. The $\symS_n$-representations on $H^*(\Mbar_{0,n+1})$ can be computed by analyzing the Kapranov map $\Mbar_{0,n+1} \to \PP^{n-2}$, which can be factored into a sequence of $\symS_n$-equivariant blowups \cite{Kap}.

More precisely, the wall crossings of the moduli spaces of $\d$-stable quasimaps yield an $\symS_n$-equivariant factorization
\beq \label{5}\Mbar_{0,n+1}\cong \fQ^{\d=\infty}\lra \cdots \lra \fQ^{\d=0^+}\lra \Mbar_{0,n}\eeq
of the forgetful morphism $\Mbar_{0,n+1}\to \Mbar_{0,n}$ which forgets the last marking.
Moreover, every map in \eqref{5} is either a smooth blowup, a $\PP^1$-bundle, or a combination of both. By applying the blowup formula and the projective bundle formula, it was shown in \cite[\S4]{CKL} that the computation of the $\symS_n$-representations on the cohomology of $\Mbar_{0,n}$ is reduced to that of graded $\symS_n$-representation
\beq\label{2}\bigoplus_{k=0}^{n-2}H^{2k}(\Mbar_{0,n+1})t^k \; \in \mathrm{Rep}(\symS_n)[t].\eeq

The computation of \eqref{2} is more accessible, as one can repeatedly  
apply the blowup formula to the factorization \eqref{eq:Kap.map} of the Kapranov map.
This computation was carried out in \cite[\S5]{CKL}, yielding the formula
\beq\label{4}H^{2k}(\Mbar_{0,n+1})=\sum_{T\in \sT_{n,k}}U_T \;\in \mathrm{Rep}(\symS_n)\eeq
where $\sT_{n,k}$ denotes the set of \emph{weighted rooted trees} with $n$ inputs and weight $k$, and $U_T$ is the $\symS_n$-representation associated to $T$. By combining these results, we obtained a closed formula for the $\symS_n$-representations on $H^*(\Mbar_{0,n})$. See \S \ref{s:review} for further details about \cite{CKL}.

In principle, the formula in \cite{CKL} should lead to an explicit computation of $\symS_n$-representations on $H^*(\Mbar_{0,n})$. However, for actual numerical computations, one has to deal with the combinatorial complexity of weighted rooted trees, which increases rapidly as $n$ or $k$ grows.

\subsection{Recursive algorithm}
The first goal of this paper is to overcome the combinatorial difficulty of \eqref{4} and provide a \textbf{programmable} recursive algorithm for computing the $\symS_n$-representations on  $H^*(\Mbar_{0,n})$.

To state the precise results, let us consider the generating series of the characters of the representations as graded symmetric functions as follows.
\beq\label{10}\begin{split}
	&P:=1+\sfh_1 +\sfh_2 +\sum_{n\geq 3,k\geq 0}P_{n,k} t^k, \qquad P_{n,k}:=\ch_{\symS_n}\left(H^{2k}(\Mbar_{0,n})\right)\\
	&Q:=1+\sfh_1 +\sum_{n\geq 2,k\geq0}Q_{n,k} t^k, \qquad Q_{n,k}:=\ch_{\symS_n}\left(H^{2k}(\Mbar_{0,n+1})\right)
\end{split}\eeq
where $\ch_{\symS_n}(-)$ denotes the Frobenius characteristic map, and $\sfh_n$ denotes the $n$-th complete homogeneous symmetric function (see \S\ref{ss:Frob} for definitions).
By \eqref{4}, we have 
$$Q=1+\sfh_1 +\sum_{n\geq2,k\geq0}Q_{n,k} t^k, \qquad Q_{n,k}=\sum_{T\in \sT_{n,k}}\ch_{\symS_n}(U_T).$$
We define another generating function 
\beq\label{9} Q^+=\sfh_1 +\sum_{n\geq2,k>0}Q_{n,k}^+ t^k, \qquad Q_{n,k}^+:=\sum_{T\in \sT_{n,k}^+}\ch_{\symS_n}(U_T),\eeq
where $\sT_{n,k}^+\subset \sT_{n,k}$ denotes a subset consisting of elements having positive weight at the root vertex (see Definition~\ref{def:Tplus}). %
With these notations, we prove the following.

\begin{theorem}[Corollary \ref{cor:CKL.gen} and Theorem~\ref{thm:main1}] \label{thm:intro1} $\;$
\begin{enumerate}
\item $P$ and $Q$ are related by the formula
\beq\label{6}
(1+t)P=(1+t+\sfh_1 t)Q-t\sfs_{(1,1)}\circ Q,
\eeq
where $\circ$ denotes the plethysm (see \S\ref{ss:plethysm}) and $\sfs_{(1,1)}$ is the Frobenius characteristic of the sign representation of $\symS_2$ (see \S\ref{ss:Frob}).
	\item  $Q^+$ satisfies a recursive formula 
	\beq \label{8}Q^+=\sfh_1 +\sum_{r\geq 3}\left(\sum_{i=1}^{r-2}t^i\right)(\sfh_r\circ Q^+).\eeq
	\item  $Q$ is the \emph{plethystic exponential} (see \S\ref{ss:Exp}) of $Q^+$, that is,
	\beq\label{16}Q=\Exp(Q^+),\eeq
	where $\Exp(-):=\sum_{a\geq 0}\sfh_a\circ(-)$.
\end{enumerate}
\end{theorem}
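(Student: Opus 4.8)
The plan is to prove the three identities separately: \eqref{6} is a generating-function repackaging of the $\symS_n$-equivariant cohomology computations of \cite{CKL} recalled in \S\ref{s:review}, whereas \eqref{8} and \eqref{16} are extracted from the inductive structure of the weighted rooted trees behind \eqref{4}. For \eqref{6}, recall that the $\symS_n$-equivariant factorization \eqref{5} was analyzed in \cite[\S4]{CKL} by means of the projective bundle formula and the blowup formula; this yields an identity in $\Rep(\symS_n)[t]$ expressing $\bigoplus_k H^{2k}(\Mbar_{0,n})t^k$ in terms of $\bigoplus_k H^{2k}(\Mbar_{0,n+1})t^k$ together with contributions from the smaller moduli spaces along which the blowups are performed, the latter carrying a natural $\symS_2$-action whose sign character is responsible for the $\sfs_{(1,1)}$. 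Applying the Frobenius characteristic $\ch_{\symS_n}$ and summing over $n$ with the conventions of \eqref{10}, while using that $\ch_{\symS_n}$ converts the relevant induction and sign-twisting into the plethysm $\sfs_{(1,1)}\circ(-)$, turns this cohomological identity into \eqref{6}; the only remaining point is to reconcile the low-degree terms $1,\sfh_1,\sfh_2$ and $1,\sfh_1$ of \eqref{10} with $\Mbar_{0,3}$ and $\Mbar_{0,4}$.

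For \eqref{8}, recall from the construction behind \eqref{4} that a tree $T\in\sT_{n,k}$ encodes an iterated $\symS_n$-equivariant blowup of a projective space, each vertex carrying as its weight a power of a hyperplane-or-exceptional divisor class, with $U_T$ the associated summand of $H^{2k}(\Mbar_{0,n+1})$. The first step is to isolate the recursive shape of $\sT^+_{n,k}$ for $n\geq 2$: a tree with positive root weight has as its root a copy of $\PP^{r-2}$ for some $r\geq 3$, carrying a root weight that may be any of $1,\dots,r-2$ (the nonzero powers of its hyperplane class), and the root has exactly $r$ children, which are again trees in the same family (single leaves being allowed), with the $n$ inputs distributed among them. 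Moreover, by the blowup formula, $U_T$ is assembled from the $U_{T_j}$ over the children by induction while the root contributes only the power $t^i$; since the $r$ children form an unordered tuple on which $\symS_n$ acts through the $n$ inputs, the total contribution of a fixed pair $(r,i)$ to $Q^+$ is $t^i(\sfh_r\circ Q^+)$, this being exactly the description, via the plethysm $\sfh_r\circ Q^+$, of the Frobenius characteristic of an unordered $r$-tuple of $Q^+$-structures. Summing over $r\geq 3$ and $1\leq i\leq r-2$ and adjoining the $n=1$ term $\sfh_1$ gives \eqref{8}.

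For \eqref{16}, the same description shows that every $T\in\sT_{n,k}$ is either indecomposable --- a single leaf, or a tree of positive root weight, i.e. one of the objects counted by $Q^+$ --- or has root weight $0$, in which case it amounts precisely to the unordered multiset of its ($\geq 2$) children, each of which is indecomposable (a weighted rooted tree carries nonzero weight at every internal vertex other than its root). Hence weighted rooted trees are in weight- and input-preserving bijection with finite multisets of indecomposable ones, and since $\ch_{\symS_n}$ of an $a$-element multiset of $Q^+$-structures is $\sfh_a\circ Q^+$, summing over $a\geq 0$ yields $Q=\sum_{a\geq 0}\sfh_a\circ Q^+=\Exp(Q^+)$.

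The main obstacle is not the symmetric-function bookkeeping but the geometric input underlying it: extracting from \cite{CKL} the precise recursive description of $\sT^+$, and above all the multiplicativity of $U_T$ along the tree --- which is what makes $\ch_{\symS_n}$ behave multiplicatively so that plethysm enters --- together with the structural fact that a weighted rooted tree carries nonzero weight at every internal vertex except possibly the root. A secondary, standard but delicate, point is to confirm that the plethystic interpretation of unordered tuples and multisets (with $\symS_n$ acting through the inputs) is compatible with the automorphisms of the trees.
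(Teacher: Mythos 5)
Your overall strategy is right, and for parts (2) and (3) it coincides with the paper's proof up to a reparametrization worth noting. The paper's Lemma~\ref{lem:key1} and Proposition~\ref{prop:main1} separate the root's $a$ direct inputs from its $r$ subtree children, giving $\bar Q^+ = \sum_{a,r\geq0,\;0<b\leq a+r-2}\sfh_a(\sfh_r\circ\bar Q^+)t^b$ in terms of $\bar Q^+=Q^+-\sfh_1$, and Theorem~\ref{thm:main1} then recombines them by expanding $\sfh_{r'}\circ(Q^+-\sfh_1)$ and invoking the alternating identity $\sum_{a+c=d}(-1)^c\sfh_a\sfe_c=0$. You instead absorb the leaf $\sfh_1$ into $Q^+$ from the start and treat the root's $r$ attached objects (inputs counted as single-leaf $Q^+$-structures together with proper subtrees) as one unordered $r$-multiset. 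That gives $\sfh_r\circ Q^+$ directly, skipping the $\sfh$--$\sfe$ cancellation. The two are equivalent since $\sfh_r\circ(\sfh_1+\bar Q^+)=\sum_{a+r'=r}\sfh_a(\sfh_{r'}\circ\bar Q^+)$, but your route is cleaner. The point you flag as ``secondary but delicate'' --- that $\ch$ turns the unordered-tuple construction into plethysm compatibly with tree automorphisms --- is precisely Lemma~\ref{lem:plethysm.Ind} and Lemma~\ref{lem:key2} in the paper, and it does need to be stated and used; you are right that this is where the real content lies. Part (3) is the same as the paper's use of the full bijection $\Phi$ with $b\geq0$.

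For part (1), however, your heuristic explanation of the $\sfs_{(1,1)}\circ Q$ term is misleading and, if taken literally, would leave the derivation incomplete. The sign character of $\symS_2$ genuinely appears only in the single exceptional blowup at the wall $\d=0^+$ for even $n$, where the center is $\frac12\binom{n}{n/2}$ copies of $\Mbar_{0,n/2+1}\times\Mbar_{0,n/2+1}$ with a swap action; this produces $\sfs_{(1,1)}\circ Q_{n/2}$ and nothing more. The remaining blowups at walls $2\leq i<n/2$ contribute plain products $Q_iQ_{n-i}$, with no sign twist at all. The step you are actually missing is the plethysm expansion $\sfs_{(1,1)}\circ\sum_n Q_n=\sum_n\sfs_{(1,1)}\circ Q_n+\sum_{n_1<n_2}Q_{n_1}Q_{n_2}$ (Example~\ref{ex:antisym}), which is what simultaneously packages the genuinely sign-twisted middle term and the untwisted cross-products into the single generating-function expression $\sfs_{(1,1)}\circ Q$. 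Without it you cannot pass from \cite[Theorem 4.8]{CKL} (our Theorem~\ref{thm:CKL}) to the closed form \eqref{6}; with it, the rest is the bookkeeping of the low-degree terms that you already anticipated.
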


By (1), $P$ follows from $Q$ and by (3), $Q$ follows from $Q^+$ while $Q^+$ can be directly computed by (2). 

The formula \eqref{6} is a consequence of \cite[Theorem 4.8]{CKL}, derived from $\d$-wall crossings in \eqref{5}. The formulas \eqref{8} and \eqref{16} stem from the inductive structure of weighted rooted trees. If we restrict to weighted rooted trees with positive weight at the root vertex, a weighted rooted tree can be constructed by combining smaller such rooted trees. This inductive structure results in \eqref{8}. Allowing the root vertex to have zero weight and utilizing the properties of the plethysm product, we arrive at \eqref{16}. It turns out that a combination of an exponential form of \eqref{8} and \eqref{16} is an equivariant generalization of Manin's characterization of the generating series $\varphi$ for the Poincar\'e polynomials of $\Mbar_{0,n+1}$ in \cite{Man} (see Remark~\ref{rem:Manin}). This type of characterization of $\varphi$ had been generalized to higher genus cases \cite{DN}, but not to an equivariant setting.

We note that the resulting inductive formulas in Theorem \ref{thm:intro1} are significantly simpler and more straightforward than the one derived in \cite{BM}. These formulas only involve the multiplications of the Schur functions and the plethysm product of the form $\sfh_r\circ (-)$ and $\sfs_{(1,1)}\circ (-)$. The former is governed by the well-established theory of Littlewood-Richardson rule, and the latter can be efficiently computed using the properties of the plethysm (cf. \S \ref{ss:plethysm}). We implemented the algorithm into a Mathematica program and computed $P$ and $Q$ for $n\le 25$  (cf. Remark \ref{rem:comp.rep}).

\subsection{Equivariant log-concavity} 
Based on our computational results for the $\symS_n$-representations on the cohomology of $\Mbar_{0,n}$ and $\Mbar_{0,n+1}$, we investigate their equivariant properties. Especially, we are interested in the equivariant log-concavity.

A sequence $\{a_k\}$ of integers is called \emph{log-concave} if $a_k^2\ge a_{k-1}a_{k+1}$ holds for all $k$. Recently, Aluffi-Chen-Marcolli in \cite{ACM} established asymptotic (ultra) log-concavity of the Poincar\'e polynomial of $\Mbar_{0,n}$, which means that the sequence of Betti numbers of $\Mbar_{0,n}$ is (ultra) log-concave when $n$ is sufficiently large. 

How do we generalize the log-concavity for a sequence of $\symS_n$-modules?  
\begin{definition} \cite{GPY, MMPR}
A sequence $\{V_k\}_k$ of finite-dimensional $\symS_n$-representations is said to be \emph{(resp. strongly) equivariantly log-concave} if
	\[V_{k-1}\otimes V_{k+1}\subset V_k\otimes V_k \qquad \text{(resp. }~V_i\otimes V_l \subset V_j\otimes V_k)\]
	for all $k$ (resp. for all $i\leq j\leq k\leq l$ with $i+l=j+k$).
\end{definition}

Based on numerical computations, we propose the following conjecture, which is a refinement of log-concavity of the Betti numbers of $\Mbar_{0,n}$. 

\begin{conjecture}\label{conj:equiv.lc}
	The sequence of  $\symS_n$-representations $\left\{H^{2k}(\Mbar_{0,n})\right\}_{0\leq k\leq n-3}$
	is strongly equivariantly log-concave.
\end{conjecture}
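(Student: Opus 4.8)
The plan is to derive the conjecture from the Lefschetz package attached to an $\symS_n$-invariant polarization, reducing it to a short list of genuinely Hodge-theoretic inequalities. Write $d=n-3=\dim\Mbar_{0,n}$ and $V_k:=H^{2k}(\Mbar_{0,n})$, so that $V_k\ne 0$ exactly for $0\le k\le d$ and all cohomology sits in even degree. Averaging an ample class over $\symS_n$ produces an $\symS_n$-invariant ample class $\omega$, and cup product with $\omega$ is then an $\symS_n$-equivariant operator satisfying hard Lefschetz; hence $\bigoplus_k V_k$ becomes a module over $\mathfrak{sl}_2\times\symS_n$. Splitting off $\mathfrak{sl}_2$-isotypic components yields the $\symS_n$-equivariant primitive decomposition
\[
V_k\;=\;\bigoplus_{p=0}^{\min(k,\,d-k)}P^{2p}(\Mbar_{0,n}),
\]
with $P^{2p}\subseteq V_p$ the primitive part; in particular $V_a\cong V_{d-a}$ as $\symS_n$-modules. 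I would also use that $H^*(\Mbar_{0,n})$ is generated as a ring in degree $1$ (by $V_1$), so every cup-product map $V_a\otimes V_b\to V_{a+b}$ is surjective, hence (we are in characteristic $0$) split; consequently $V_{a+b}$ is a subrepresentation of $V_a\otimes V_b$ whenever $a+b\le d$.

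Strong equivariant log-concavity amounts to producing, for each $i\le j\le k\le l$ in $[0,d]$ with $i+l=j+k$, an $\symS_n$-equivariant injection $V_i\otimes V_l\hookrightarrow V_j\otimes V_k$. For the ``interior'' quadruples, those with $i+j\le d$ and $k+l\ge d$ (which include the consecutive inequalities near the middle degree), I would take the $\symS_n$-equivariant map $L^{j-i}\otimes\Lambda^{j-i}\colon V_i\otimes V_l\to V_j\otimes V_k$, where $L=(\,\cdot\,)\cup\omega$, $\Lambda$ is the dual Lefschetz operator, and $k=l-(j-i)$. Factoring the hard Lefschetz isomorphisms $L^{d-2i}\colon V_i\xrightarrow{\sim}V_{d-i}$ and $\Lambda^{2l-d}\colon V_l\xrightarrow{\sim}V_{d-l}$ through $L^{j-i}$ and $\Lambda^{j-i}$ respectively --- which is possible precisely when $i+j\le d$ and when $k+l\ge d$ --- shows that both factors are injective on this range, so the map above is a split injection. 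A further family of boundary cases is then disposed of formally: the inclusions $V_{a+b}\subseteq V_a\otimes V_b$ from the first paragraph, combined with the reflections $V_a\cong V_{d-a}$, settle every quadruple whose smallest member is $0$ (equivalently, after reflection, whose largest member is $d$).

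The remaining boundary quadruples --- those with $i+j>d$, and dually those with $k+l<d$, that are not of this ``product'' type --- are the real obstruction, and I expect them to be the hard part. Via the primitive decomposition each reduces to an inclusion $\bigoplus_{p\le a,\,q\le b}P^{2p}\otimes P^{2q}\subseteq\bigoplus_{p\le c,\,q\le\delta}P^{2p}\otimes P^{2q}$, where concavity of $a\mapsto\min(a,d-a)$ supplies $a+b\le c+\delta$; but because distinct summands $P^{2p}\otimes P^{2q}$ share irreducible constituents this combinatorial inequality does not by itself yield the inclusion. Taking dimensions already turns these into the log-concavity inequalities $\dim V_i\cdot\dim V_l\le\dim V_j\cdot\dim V_k$ for the Betti numbers, so a complete proof must in particular recover, for every $n$, the log-concavity established only asymptotically by Aluffi--Chen--Marcolli. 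What is needed beyond the combinatorics is positivity: an \emph{equivariant} form of the mixed Hodge--Riemann bilinear relations on $\Mbar_{0,n}$, which would make the relevant pairings among primitive parts as nondegenerate as possible and thereby render the Lefschetz-type comparison maps between primitive pieces equivariantly split injective. Establishing such an equivariant Hodge--Riemann statement is, I believe, the main obstacle. A natural attempt is an induction on $n$ that feeds in the $\symS_n$-equivariant factorization \eqref{5} of $\Mbar_{0,n+1}\to\Mbar_{0,n}$ into smooth blowups and $\PP^1$-bundles, together with the relations \eqref{6} and $Q=\Exp(Q^+)$ of Theorem~\ref{thm:intro1}, and tracks how Hodge--Riemann positivity propagates through blowups and projective bundles (for instance via the equivariant decomposition theorem applied to \eqref{5}); the catch is that tensor products, hence equivariant log-concavity, are not additive, so the conclusion cannot be transported formally along \eqref{5}, \eqref{6}, \eqref{16}, and the compatibility of the Lefschetz structures of $\Mbar_{0,n}$, of the blown-up loci, and of the intermediate moduli spaces must be used essentially. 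Finally, the finitely many small-$n$ instances are verified directly from the $\symS_n$-representations output by the algorithm of Theorem~\ref{thm:intro1}, which already confirms the conjecture for $n\le 25$.
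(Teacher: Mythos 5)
This statement is a \emph{conjecture} in the paper: there is no proof, only the computational verification for $n\le 25$ reported in Remark~\ref{rem:comp.rep}. So your proposal should be judged as an independent proof attempt, and as you candidly admit, it does not close.

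The parts you do establish are correct: averaging an ample class gives an $\symS_n$-invariant polarization; hard Lefschetz then yields $\symS_n$-equivariant isomorphisms $V_a\cong V_{d-a}$ and an equivariant primitive decomposition; Keel's presentation shows $H^*(\Mbar_{0,n})$ is generated in degree two, so each cup product $V_a\otimes V_b\to V_{a+b}$ is a split equivariant surjection, giving $V_{a+b}\subset V_a\otimes V_b$ for $a+b\le d$; and for quadruples with $i+j\le d$ and $k+l\ge d$ the map $L^{j-i}\otimes\Lambda^{j-i}$ is an equivariant split injection $V_i\otimes V_l\hookrightarrow V_j\otimes V_k$. But the remaining quadruples are not covered. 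A concrete example: with $d=6$ (so $n=9$) and $(i,j,k,l)=(1,2,2,3)$, one has $k+l=5<d$, so $\Lambda^{1}:V_3\to V_2$ is not injective (its source is the middle degree and has strictly larger dimension), and the smallest index is $1$ and the largest is $3$, so neither the ``product'' reduction nor a reflection brings it into the handled range. Your suggested fix --- an equivariant version of the mixed Hodge--Riemann relations that would make the comparison maps between primitive pieces equivariantly split injective --- is only sketched, not proved, and you correctly note that merely passing to dimensions would already require the (nonequivariant) log-concavity of the Betti numbers of $\Mbar_{0,n}$ for \emph{all} $n$, whereas Aluffi--Chen--Marcolli prove this only asymptotically. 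So the proposal is a reasonable partial reduction, but it leaves the central positivity input unestablished, which is consistent with the statement remaining a conjecture in the paper.
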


Another way to describe log-concavity of $\symS_n$-modules is to consider 
multiplicities of irreducible representations.  
Let $S^\lambda$ denote the irreducible $\symS_n$-representation corresponding to a partition $\lambda$ of $n$.

\begin{conjecture}\label{conj:mult.lc}
For each partition $\lambda$ of $n$, the multiplicities of $S^\lambda$ in 
$\{H^{2k}(\Mbar_{0,n})\}_k$  form a log-concave sequence. The same holds for the multiplicities of $S^\lambda$ in $\{H^{2k}(\Mbar_{0,n+1})\}_k$.
\end{conjecture}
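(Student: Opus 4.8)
The plan is to translate the conjecture into log-concavity of an explicit family of one-variable polynomials and then to push the recursions of Theorem~\ref{thm:intro1}, supplemented by Hodge-theoretic positivity, as far as they will go --- while being explicit from the outset about why the operations involved make this delicate rather than routine. For $\lambda\vdash n$, the multiplicity of $S^\lambda$ in $H^{2k}(\Mbar_{0,n})$ is the coefficient of $t^k$ in $P^\lambda_n(t):=\langle P,\sfs_\lambda\rangle$, and the multiplicity in $H^{2k}(\Mbar_{0,n+1})$ is the coefficient of $t^k$ in $Q^\lambda_n(t):=\langle Q,\sfs_\lambda\rangle$; the claim is that $P^\lambda_n$ and $Q^\lambda_n$ have log-concave coefficient sequences. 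Two preliminary remarks set the tone. First, for the trivial representation $\lambda=(n)$ the $P$-statement specializes to the log-concavity of the Betti numbers of $\Mbar_{0,n}/\symS_n$, i.e. to Question~\ref{12}, which is itself open --- so the conjecture is genuinely harder than anything proved here. Second, by Corollary~\ref{cor:ultra.lc} the polynomial $P^{(n)}_n$ is already not ultra-log-concave, hence not real-rooted, so the real-stability machinery (Newton's inequalities, interlacing families) cannot be invoked wholesale; one must establish plain log-concavity directly.

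I would first reduce the $P$-statement to the $Q$-statement. Pairing $(1+t)P=(1+t+\sfh_1 t)Q-t\,\sfs_{(1,1)}\circ Q$ with $\sfs_\lambda$, using $\sfs_{(1,1)}\circ Q=\tfrac12\bigl(Q^2-Q^{[2]}\bigr)$ where $Q^{[2]}$ is the substitution $\sfp_i\mapsto \sfp_{2i}$, and using Pieri for $\langle\sfh_1 Q_{n-1},\sfs_\lambda\rangle$, one obtains $(1+t)P^\lambda_n$ as a fixed combination, with coefficients in $\ZZ_{\ge0}[t]$, of the polynomials $Q^\mu_m$ with $m\le n$. Thus it suffices to prove log-concavity of all the $Q^\lambda_n$ \emph{and} a stability lemma: the combination of operations occurring here --- Pieri addition, the substitution $Q\mapsto Q^{[2]}$, multiplication of the relevant polynomials, and final division by $1+t$ (i.e. passage to alternating partial sums) --- produces a log-concave sequence. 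This last step is already subtle, since division by $1+t$ does not preserve log-concavity in general; one would need to know that the right-hand side is divisible by $1+t$ with log-concave quotient, presumably by carrying a stronger invariant than bare log-concavity through the argument.

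For the $Q$-side, induction on $n$ using $Q=\Exp(Q^+)=\sum_{a\ge0}\sfh_a\circ Q^+$ and the recursion $Q^+=\sfh_1+\sum_{r\ge3}\bigl(\sum_{i=1}^{r-2}t^i\bigr)(\sfh_r\circ Q^+)$ is the natural strategy. Expanding in the Schur basis, each $Q^\lambda_n$ (and each $\langle Q^+_n,\sfs_\lambda\rangle$) becomes a finite sum of terms of the form $c\cdot t^{i}\cdot \prod_j Q^{\mu_j}_{m_j}(t)$ with $\sum_j m_j<n$ on every nontrivial term --- each application of $\sfh_r$, $r\ge 3$, strictly lowers the number of inputs in each factor --- the constant $c$ being a nonnegative product of Littlewood--Richardson and plethysm coefficients. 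Here two classical facts cooperate: the convolution of log-concave sequences with no internal zeros is again log-concave with no internal zeros, and multiplication by the ``uniform'' polynomial $\sum_{i=1}^{r-2}t^i$ preserves this class. So, carrying the inductive hypothesis in the strengthened form ``log-concave with no internal zeros,'' every individual summand lies in that class. The crux is that $Q^\lambda_n$ is the \emph{sum} of these summands over $r$, $a$, and the combinatorial data, and a sum of log-concave sequences need not be log-concave. Closing the induction would require either (i) uniform control showing the supports of the summands are nested or synchronized so that termwise addition preserves log-concavity, or (ii) a yet stronger, operation-stable invariant among the $\{Q^\mu_m\}$ (a suitable interlacing condition, say --- though the failure of real-rootedness shows it cannot be the classical one).

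The main obstacle is therefore precisely this sum step: Theorem~\ref{thm:intro1} builds $P$ and $Q$ from products and plethysms --- operations under which ``log-concave with no internal zeros'' is stable --- but also sums over combinatorial data, and nothing currently controls those sums. The alternative is to bring in geometry. Both $\Mbar_{0,n}$ and $\Mbar_{0,n+1}$ are smooth projective with finite automorphism groups, hence carry $\symS_n$-invariant ample classes $\omega$; applying $\Hom_{\symS_n}(S^\lambda,-)$ to multiplication by $\omega$ and invoking the hard Lefschetz theorem shows each $P^\lambda_n$ and $Q^\lambda_n$ is palindromic and \emph{unimodal}. Upgrading unimodality to log-concavity would need an equivariant refinement of the Hodge--Riemann relations --- for instance, that on the $S^\lambda$-isotypic part of the Chow ring (in Keel's presentation) a pair $(\omega_1,\omega_2)$ of $\symS_n$-invariant nef classes satisfies the mixed Hodge--Riemann inequality --- and no such equivariant Hodge--Riemann theory is presently available. (This conjecture is, incidentally, logically independent of Conjecture~\ref{conj:equiv.lc}: strong equivariant log-concavity compares tensor products $H^{2k}\otimes H^{2k}$ only up to Kronecker coefficients, which does not isolate the multiplicity of a fixed $S^\lambda$.) Producing the missing equivariant Hodge--Riemann input, or equivalently pinning down the synchronization that makes the combinatorial sum log-concave, is in my view the heart of the matter.
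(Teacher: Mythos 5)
This statement is Conjecture~\ref{conj:mult.lc}; the paper does not prove it. The only supporting evidence it supplies is numerical verification for $n\le 25$ (Remark~\ref{rem:comp.rep}) and, for the single partition $\lambda=(n)$, the asymptotic log-concavity of Theorem~\ref{thm:asymp.p_k} and Corollary~\ref{cor:asymp.p_k}. Your proposal correctly treats the statement as open and does not claim to close it, and your survey of strategies is largely sound; in particular the hard Lefschetz observation that each multiplicity sequence is palindromic and unimodal is correct and is a genuine (if modest) piece of evidence not recorded in the paper. So there is no ``paper proof'' to deviate from, and the honest verdict is that neither you nor the paper proves the conjecture.

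Two concrete inaccuracies in your sketch are worth flagging beyond the gaps you already identify. First, the reduction of the $P$-statement to the $Q$-statement is not a nonnegative combination: in $(1+t)P=(1+t+\sfh_1 t)Q-t\,\sfs_{(1,1)}\circ Q$ the plethysm term is \emph{subtracted}, and $\sfs_{(1,1)}\circ Q$ is Schur-positive, so pairing with $\sfs_\lambda$ expresses $(1+t)P^\lambda_n$ as a difference involving products $Q^\mu_iQ^\nu_{n-i}$ with positive Littlewood--Richardson coefficients. Even granting log-concavity of every $Q^\mu_m$ and resolving the division by $1+t$, this cancellation blocks the transfer. Second, the summands produced by $\sfh_r\circ Q^+$ are not of the form $c\,t^{i}\prod_j Q^{\mu_j}_{m_j}(t)$: since $\sfh_{r}\circ(Ft^{k})=t^{rk}(\sfh_{r}\circ F)$, the diagonal plethysm contributions dilate the $t$-grading (already visible in the invariant projection, where $\sfh_r\circ\fq^+$ involves $\fq^+(q^d,t^d)$), and a dilated log-concave sequence has internal zeros and is not log-concave. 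So the individual summands in your induction already escape the class ``log-concave with no internal zeros,'' and the induction breaks at the plethysm step as well as at the sum step. These do not change your overall (correct) conclusion that the conjecture remains open, but they mean the inductive scheme would need to be reorganized even before confronting the sum problem you rightly single out.
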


We checked that Conjectures \ref{conj:equiv.lc} and \ref{conj:mult.lc} hold for $n\le 25$. Although there is no direct implication between these two conjectures, Conjecture \ref{conj:mult.lc} can provide a supporting evidence for Conjecture \ref{conj:equiv.lc}. For example, one can show that if Conjecture \ref{conj:mult.lc} holds, the multiplicity of the trivial representation in $H^{2(k-1)}(\Mbar_{0,n})\otimes H^{2(k+1)}(\Mbar_{0,n}) $ is less than or equal to that in $H^{2k}(\Mbar_{0,n})^{\otimes 2}$.

\subsection{Betti numbers of the moduli space of rational curves with unordered marked points} 
Concerning Conjecture \ref{conj:mult.lc}, of particular interest are the multiplicities of  
the trivial representation or the dimensions of the invariant parts, because they are the Betti numbers of the moduli space $\Mbar_{0,n}/\symS_n$ of stable rational curves with unordered marked points.   

Consider the invariant analogues of \eqref{10} as follows.
\beq\label{eq:gen.series.inv}\begin{split}
	\fp:=1+q&+q^2+\sum_{n\geq3,k\geq0}\fp_{n,k}q^nt^k, \qquad \fp_{n,k}:=\dim H^{2k}(\Mbar_{0,n})^{\symS_n},\\
	\fq:=1&+q+\sum_{n\geq2,k\geq0}\fq_{n,k}q^nt^k,\qquad \fq_{n,k}:=\dim H^{2k}(\Mbar_{0,n+1})^{\symS_n},\\ 	
	\fq^+:&=q+\sum_{n\geq 2,k\geq0}\fq^+_{n,k} q^nt^k, \qquad  \fq^+_{n,k}:=\lvert \sT^+_{n,k}\rvert,
\end{split}
\eeq
where $q$ is a formal parameter recording $n$.
Note that 
\[\fp_n:=\sum_{k=0}^{n-3}\fp_{n,k} t^k \and \fq_n:=\sum_{k=0}^{n-2}\fq_{n,k}t^k \]
are the Poincar\'{e} polynomials of $\Mbar_{0,n}/\symS_n$ and $\Mbar_{0,n+1}/\symS_n$ respectively.

\begin{theorem}[Theorem~\ref{thm:main2} and Theorem \ref{thm:qwc.inv}]\label{thm:intro2}
	The generating series $\fp$, $\fq$ and $\fq^+$ satisfy the following identities.
\begin{enumerate}
\item $(1+t)\fp=(1+t+qt)\fq-\frac{1}{2}t(\fq^2-\fq^{[2]}),$
where $\fq^{[2]}(q,t):=\fq(q^2,t^2)$.
\item $\fq^+=q+\sum_{r\geq 3}\left(\sum_{i=1}^{r-2}t^i\right)\left(\sfh_r\circ \fq^+\right)$.
\item $ \fq=\Exp(\fq^+)$.
\end{enumerate}
Here, the operations $\circ$ and $\Exp$ are the plethysm and the plethystic exponential on $\Z\llbracket q,t\rrbracket$ respectively (see \S\ref{ss:function.A}).
\end{theorem}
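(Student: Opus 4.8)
The plan is to obtain the three identities by pushing the three identities of Theorem~\ref{thm:intro1} forward along the ``take $\symS_n$-invariants'' specialization. Let $\Lambda$ be the ring of symmetric functions and let $\phi\colon\Lambda_{\QQ}\llbracket t\rrbracket\to\QQ\llbracket q,t\rrbracket$ be the $\QQ\llbracket t\rrbracket$-algebra homomorphism with $\phi(\sfp_m)=q^m$ for all $m\ge1$; equivalently, if $f=\sum_n f_n$ is the decomposition of $f\in\Lambda$ into homogeneous components, then $\phi(f)=\sum_n\langle f_n,\sfh_n\rangle\,q^n$. Since $\sfh_n=\ch_{\symS_n}(\mathbf 1)$, for any finite-dimensional $\symS_n$-representation $V$ we have $\phi(\ch_{\symS_n}V)=(\dim V^{\symS_n})\,q^n$. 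The map $\phi$ is multiplicative, because $\langle f_mg_n,\sfh_{m+n}\rangle=\langle f_m,\sfh_m\rangle\langle g_n,\sfh_n\rangle$ (i.e.\ $\sfh_\bullet$ is grouplike for the comultiplication of $\Lambda$); it sends $\sfh_r\mapsto q^r$; and for any $g\in\Lambda_{\QQ}\llbracket t\rrbracket$ and $m\ge1$ it satisfies $\phi(\sfp_m\circ g)=\phi(g)(q^m,t^m)$, since $\sfp_m\circ g$ is obtained from $g$ by the plethystic substitution $\sfp_k\mapsto\sfp_{mk}$, $t\mapsto t^m$. Expanding $f\in\Lambda$ in power sums and using multiplicativity, the last two facts upgrade to $\phi(f\circ g)=f\circ\phi(g)$, where the right-hand $\circ$ is the plethystic action of $\Lambda$ on $\ZZ\llbracket q,t\rrbracket$ of \S\ref{ss:function.A}; in particular $\phi$ intertwines the operator $\Exp=\sum_{a\ge0}\sfh_a\circ(-)$ on the two sides.

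Next I would compute $\phi(P)$, $\phi(Q)$ and $\phi(Q^+)$. From $P_{n,k}=\ch_{\symS_n}H^{2k}(\Mbar_{0,n})$ and $\phi(1+\sfh_1+\sfh_2)=1+q+q^2$ we get $\phi(P)=\fp$, and likewise $\phi(Q)=\fq$. The identification $\phi(Q^+)=\fq^+$ needs the additional input that $\dim U_T^{\symS_n}=1$, equivalently that the trivial representation occurs with multiplicity one in $U_T$, for every $T\in\sT^+_{n,k}$; granting this, $\phi(Q^+_{n,k})=\bigl(\sum_{T\in\sT^+_{n,k}}\dim U_T^{\symS_n}\bigr)q^n=\lvert\sT^+_{n,k}\rvert\,q^n$, hence $\phi(Q^+)=\fq^+$. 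This $1$-dimensionality follows from the explicit construction of $U_T$ in \cite[\S5]{CKL}: each $U_T$ is assembled, vertex by vertex along the Kapranov tower, from inductions of tensor products of the representations $U_{T_v}$ attached to the branches $T_v$, twisted only by permutations and by signs coming from normal bundles of blowup centers, and an induction on $T$ shows the trivial isotypic line is preserved at every step.

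Now apply $\phi$ to Theorem~\ref{thm:intro1}. For part (1): $\phi$ sends $(1+t)P$ to $(1+t)\fp$, and, using $\phi(\sfh_1)=q$, sends $(1+t+\sfh_1t)Q$ to $(1+t+qt)\fq$; and since for fixed second argument $Q$ the plethysm $(-)\circ Q$ is a $\QQ$-algebra endomorphism of $\Lambda$ and $\sfs_{(1,1)}=\sfe_2=\tfrac12(\sfp_1^2-\sfp_2)$, we get $\sfs_{(1,1)}\circ Q=\tfrac12\bigl(Q^2-\sfp_2\circ Q\bigr)$, so $\phi\bigl(t\,\sfs_{(1,1)}\circ Q\bigr)=\tfrac12t\bigl(\fq^2-\fq(q^2,t^2)\bigr)=\tfrac12t\bigl(\fq^2-\fq^{[2]}\bigr)$; this is identity (1). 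For part (2): $\phi(\sfh_1)=q$, and by the plethysm compatibility $\phi(\sfh_r\circ Q^+)=\sfh_r\circ\fq^+$, which gives identity (2); there is no convergence problem since $\fq^+$ has zero constant term, so only finitely many $r$ contribute to each monomial $q^nt^k$. For part (3): the left side is $\phi(Q)=\fq$, while $\phi(\Exp(Q^+))=\Exp(\phi(Q^+))=\Exp(\fq^+)$ by the compatibility with $\Exp$; this is identity (3).

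The only input that is not a formal transport of Theorem~\ref{thm:intro1} is the fact $\dim U_T^{\symS_n}=1$ for all weighted rooted trees $T$, so I expect unwinding the recursive construction of $U_T$ far enough to see that the trivial isotypic component stays one-dimensional to be the main point requiring care; the other source of bookkeeping is keeping the plethysm conventions on $\ZZ\llbracket q,t\rrbracket$ straight, in particular the rule $\sfp_m\circ g(q,t)=g(q^m,t^m)$, which is what doubles both $q$ and $t$ in $\fq^{[2]}$. Alternatively, parts (2) and (3) can be derived directly from the recursive structure of $\sT^+_{n,k}$, exactly as in the proof of Theorem~\ref{thm:intro1}(2),(3) --- identity (2) recording that a tree with positive root weight is a choice of root weight $i\in\{1,\dots,r-2\}$ together with an unordered collection of $r$ smaller such trees, and identity (3) recording that an arbitrary weighted rooted tree is a grafting, at a root of possibly zero weight, of finitely many trees with positive root weight --- although reading off the left-hand side $\fq$ of (3) as a count of trees still uses the same $1$-dimensionality.
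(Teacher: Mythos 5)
Your proposal is correct and takes essentially the same approach as the paper: the paper defines the same specialization $\Inv\colon\Lambda\llbracket t\rrbracket\to\Z\llbracket q,t\rrbracket$ (sending $\sfh_n\mapsto q^n$, i.e.\ extracting the multiplicity of the trivial representation), proves in Proposition~\ref{prop:Inv.plethysm} that it intertwines the plethysm and $\Exp$ on the two sides, and then simply applies it to Theorem~\ref{thm:main1} and Corollary~\ref{cor:CKL.gen}, including your computation $\Inv(\sfs_{(1,1)}\circ Q)=\tfrac12(\fq^2-\fq^{[2]})$. The one place you overwork is the identification $\phi(Q^+)=\fq^+$: since $U_T=\Ind^{\symS_n}_{\Stab(T)}\mathbbm{1}$ is by definition a transitive permutation representation, $\dim U_T^{\symS_n}=1$ is immediate (Remark~\ref{rem:Inv.rep}), and no induction along the Kapranov tower is needed.
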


To prove this theorem, we show that the projection to the invariant part
 is a $\ZZ$-algebra homomorphism which commutes with the plethystic exponential (Proposition \ref{prop:Inv.plethysm}). Then Theorem \ref{thm:intro2} can be extracted from Theorem \ref{thm:intro1}.

Note that Theorem \ref{thm:intro2} enables us to recursively compute $\fp_{n}$ and $\fq_{n}$,   
without computing the whole representations $P_{n,k}$ or $Q_{n,k}$.
Using this, we  
checked that they are log-concave polynomials for $n\leq 45$ (Remark~\ref{rem:comp.inv}).  
This is a supporting evidence for the following special case of Conjecture \ref{conj:mult.lc}. 
\begin{conjecture}\label{conj}
	The Poincar\'e polynomials of $\Mbar_{0,n}/\symS_n$ and $\Mbar_{0,n+1}/\symS_n$ satisfy the log-concavity, that is,
	\beq\label{15}\fp_{n,k}^2\geq \fp_{n,k-1}\fp_{n,k+1} \and \fq_{n,k}^2\geq \fq_{n,k-1}\fq_{n,k+1}\eeq
hold for all $n$ and $k$.
\end{conjecture}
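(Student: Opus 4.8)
The plan is to run an induction on $n$ through the three identities of Theorem~\ref{thm:intro2}, descending from $\fp_n:=[q^n]\fp$ to $\fq_n:=[q^n]\fq$ to $\fq^+_n:=[q^n]\fq^+$. Let $\mathscr L$ denote the set of polynomials in $t$ with nonnegative coefficients that are log-concave and have no internal zeros; recall that $\mathscr L$ is closed under multiplication but \emph{not} under addition (e.g.\ $(1+2t+4t^2)+(4+2t+t^2)=5+4t+5t^2\notin\mathscr L$). The backbone is: (i) $\fq^+_n\in\mathscr L$ for all $n$, from identity~(2) of Theorem~\ref{thm:intro2}; (ii) $\fq_n\in\mathscr L$ for all $n$, from $\fq=\Exp(\fq^+)$; (iii) the log-concavity of $\fp_n$, from identity~(1). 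It is also worth recording that $\fp_n$ and $\fq_n$ are palindromic of degrees $n-3$ and $n-2$ by Poincar\'e duality on $\Mbar_{0,n}$ and $\Mbar_{0,n+1}$, which halves the number of inequalities to check and suggests trying instead to prove log-concavity of the associated $\gamma$-vectors.

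For (i), expanding the $q^n$-coefficient of $\sum_{r\ge 3}\bigl(\sum_{i=1}^{r-2}t^i\bigr)(\sfh_r\circ\fq^+)$ yields, for each $r$, each partition $\mu\vdash r$, and each compatible decomposition of $n$ over the blocks of $\mu$, a term $\bigl(\sum_{i=1}^{r-2}t^i\bigr)\prod_s\fq^+_{a_s}(t^{j_s})$ with nonnegative coefficients, where by induction each $\fq^+_{a_s}(t)\in\mathscr L$. The obstruction is twofold: the dilation $t\mapsto t^{j}$ with $j\ge 2$ creates internal zeros and kicks the individual factor out of $\mathscr L$, and one must then add up all these terms while $\mathscr L$ is not additively closed. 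The crux is thus to show that after the aggregation forced by fixing $q^n$, the dilation gaps are always filled and the sum returns to $\mathscr L$. I would attempt this by isolating the contribution of $\mu=(1^r)$ (which involves no dilation) as the dominant part, bounding the dilated remainders against it, and exploiting that the smoothing window $\sum_{i=1}^{r-2}t^i$ widens with $r$. A cleaner alternative uses $\fq^+_{n,k}=\lvert\sT^+_{n,k}\rvert$: construct an explicit injection from pairs of weighted rooted trees of total weights $(k-1,k+1)$ to pairs of total weights $(k,k)$, in the spirit of Lindstr\"om--Gessel--Viennot, using the recursive structure of $\sT^+_{n,k}$ to define a canonical weight-rebalancing move; the delicate point is global injectivity across the recursion.

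Step (ii) is formally identical: $\fq=\Exp(\fq^+)=\exp\bigl(\sum_{m\ge 1}\tfrac1m\fq^+(q^m,t^m)\bigr)$, so $[q^n]\fq$ is again a nonnegative sum of products of dilated copies $\fq^+_a(t^m)$, and the same dilation-plus-summation issue recurs; it is probably cleanest to prove (i) and (ii) simultaneously, or to first show $[q^n](\sfh_r\circ\fq^+)\in\mathscr L$ for all $r,n$ and then assemble $\fq=\sum_r\sfh_r\circ\fq^+$. For step (iii), take $[q^n]$ of identity~(1): since $(1+t)\fp_n=(1+t)\fq_n+t\,\fq_{n-1}-t\,g_n$ with $g_n:=\tfrac12[q^n](\fq^2-\fq^{[2]})$ the generating polynomial of unordered pairs of $\fq$-classes of total $q$-index $n$, one rewrites $\fp_n=\fq_n+t\,h_n$ where $(1+t)h_n=\fq_{n-1}-g_n$. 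The difficulty is the subtraction: knowing $(1+t)\fp_n\in\mathscr L$ does \emph{not} suffice to conclude $\fp_n\in\mathscr L$ (already $c=(1,3,2,3,1)\notin\mathscr L$ while $(1+t)c=(1,4,5,5,4,1)\in\mathscr L$), so one must genuinely control the correction $h_n$. Here the equivariant identity~\eqref{6}, read through $\symS_n$-invariants, identifies $(1+t)\fp_n$ as the Poincar\'e polynomial of $(\Mbar_{0,n}/\symS_n)\times\PP^1$ produced by the $\delta$-wall-crossing \eqref{5} as a genuine (non-virtual) class, which is evidence that no catastrophic cancellation occurs; turning this into a proof would require either a geometric/Hodge-theoretic handle on $\fq_{n-1}-g_n$ or a term-by-term comparison of $g_n$ with $\fq_{n-1}$ using the control obtained in (ii).

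The principal obstacle, shared by all three steps, is structural: the plethystic operations $\sfh_r\circ(-)$ and $\Exp$ are built out of the dilations $t\mapsto t^m$, which destroy log-concavity, and the recursions force one to sum many dilated pieces, whereas $\mathscr L$ is stable under neither operation. Moreover, since these sequences are known \emph{not} to be ultra-log-concave (Corollary~\ref{cor:ultra.lc}), the powerful black boxes --- real-rootedness, the theory of Lorentzian polynomials, Hodge--Riemann relations on a hard-Lefschetz package --- are unavailable unless one first finds a twist of $\fp_n$ restoring ultra-log-concavity. Absent such a structural input, the most realistic route is to make the asymptotic log-concavity of Corollary~\ref{cor:asymp.p_k} \emph{effective} --- exhibiting an explicit $N_0$ past which the asymptotic estimates already force \eqref{15} --- and then to dispatch the finitely many cases $n<N_0$ via the recursion of Theorem~\ref{thm:intro2}, which by Remark~\ref{rem:comp.inv} is computationally light.
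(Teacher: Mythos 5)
You have set out to prove a statement that the paper itself does not prove: \eqref{15} is stated as Conjecture~\ref{conj}, and the only support the authors offer is a machine verification for $n\leq 45$ (Remark~\ref{rem:comp.inv}) and the fixed-$k$ asymptotics of Theorem~\ref{thm:intro3} and Corollary~\ref{cor:asymp.p_k}. So there is no proof in the paper to compare yours against, and your text --- which candidly flags an unresolved obstruction at every one of its three steps --- is a research plan rather than a proof. To its credit, the plan correctly identifies the structural enemy (the dilations $t\mapsto t^m$ inside $\sfh_r\circ(-)$ and $\Exp$ destroy membership in $\mathscr L$, and $\mathscr L$ is not closed under the sums the recursion forces), correctly notes that the failure of ultra-log-concavity (Corollary~\ref{cor:ultra.lc}) blocks the Lorentzian/Hodge--Riemann black boxes, and correctly observes that deducing $\fp_n\in\mathscr L$ from $(1+t)\fp_n\in\mathscr L$ is invalid. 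None of these observations is then overcome: steps (i)--(iii) each end with ``one would need to show\dots,'' so nothing is established beyond what the paper already records.

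The one place where you present a concrete escape route --- make Corollary~\ref{cor:asymp.p_k} effective and check finitely many $n$ by computer --- contains a gap you should name explicitly, because it is fatal as stated. The asymptotic statements are of the form ``for each fixed $k$, the inequality at position $k$ holds for all $n\geq N_0(k)$,'' with $N_0(k)$ depending on $k$; they come from the leading term of a degree-$k$ polynomial in $n$ with an error that is $o(n^k)$ \emph{for fixed $k$}. The polynomial $\fp_n$ has degree $n-3$, so verifying \eqref{15} for a given large $n$ requires the inequality at every $k\leq n-3$, including the middle range $k\sim n/2$ where $n$ and $k$ grow together and the fixed-$k$ asymptotics say nothing. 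Consequently an effective $N_0(k)$ would still leave, for every $n$, a nonempty range of $k$ uncovered, and no finite computation closes it. What is actually needed is an estimate uniform in $k$ (or a two-variable analysis of $\fp_{n,k}$ in the regime $k=\alpha n$), which neither the paper's method nor your proposal supplies. Until that, or until the aggregation argument in step (i) is actually carried out, the conjecture remains open.
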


By considering $\fp_{n,k}$ and $\fq_{n,k}$ as functions of $n$, we can prove the following asymptotic formulas by Theorem \ref{thm:intro2} and \eqref{4}.

\begin{theorem}[Theorems~\ref{thm:asymp.q_k} and \ref{thm:asymp.p_k}]\label{thm:intro3}
	For $k\geq0$, we have 
	\[\fp_{n,k}= \frac{(k+1)^{k-2}}{(k!)^2}n^k+o(n^k) \and \fq_{n,k}= \frac{(k+1)^{k-1}}{(k!)^2}n^k+o(n^k).\]
\end{theorem}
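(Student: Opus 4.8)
The plan is to extract the leading-order asymptotics of $\fq_{n,k}$ from the recursion in Theorem~\ref{thm:intro2}(2)--(3) by induction on $k$, and then transfer the result to $\fp_{n,k}$ via the wall-crossing identity Theorem~\ref{thm:intro2}(1). The governing object is the generating series $\fq^+ = q + \sum_{n\ge 2, k\ge 0}\fq^+_{n,k}q^n t^k$, and since $\fq = \Exp(\fq^+)$ differs from $\fq^+$ only by ``disconnected'' contributions that are negligible at leading order in $n$ for each fixed $k$, it suffices to understand the $q^n$-coefficients of the $t^k$-part of $\fq^+$. I would write $\fq^+ = \sum_{k\ge 0}f_k(q)\,t^k$ and show by induction that $f_k(q)$, as a power series in $q$, has coefficients $\fq^+_{n,k}$ growing like a polynomial of degree exactly $k$ in $n$ with leading coefficient $(k+1)^{k-1}/(k!)^2$.

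\textbf{Step 1: Set up the recursion coefficientwise.} From Theorem~\ref{thm:intro2}(2), extracting the coefficient of $t^k$ gives $f_k = \delta_{k,0}\cdot q + \sum_{r\ge 3}\sum_{i=1}^{r-2}\big[t^{k-i}\big]\big(\sfh_r\circ \fq^+\big)$. The plethysm $\sfh_r\circ \fq^+$ in the ring $\Z\llbracket q,t\rrbracket$ is, concretely, the degree-$r$ complete-homogeneous operation applied to the series $\fq^+$; its $t^{k-i}$-coefficient is a sum over ways of distributing the ``$t$-budget'' $k-i$ among $r$ copies of $\fq^+$, i.e. a sum of products $f_{j_1}\cdots f_{j_r}$ (with multiplicity, since $\sfh_r$ is symmetric) over $j_1+\cdots+j_r = k-i$. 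The key point is that $i\ge 1$ in every term, so each $f_{j}$ appearing on the right has index $j \le k-1$; this is exactly what makes the recursion well-founded in $k$.

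\textbf{Step 2: Track the degree in $n$.} I claim that multiplication of power series in $q$ adds the ``degrees in $n$'' of the coefficient sequences: if $a_n \sim \alpha n^p$ and $b_n \sim \beta n^q$ then $(a*b)_n \sim \alpha\beta\, B(p+1,q+1)\, n^{p+q+1}$ by a standard convolution/beta-integral estimate. Hence a product $f_{j_1}\cdots f_{j_r}$ with $\sum j_\ell = k-i$ contributes to $[t^{k-i}](\sfh_r\circ\fq^+)$ coefficients of $q^n$ growing like $n^{(k-i)+(r-1)}$ (using the inductive hypothesis $\deg_n f_{j_\ell} = j_\ell$ for $j_\ell \le k-1$, and noting $f_0$ itself has $n$-degree $0$ with leading term matching the Catalan-type count). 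To reach total $n$-degree $k$ in $f_k$, we need $(k-i)+(r-1) = k$, i.e. $r = i+1$; combined with $i \le r-2 = i-1$ this is impossible unless\ldots \emph{wait}: the constraint is $1\le i\le r-2$, so $r\ge i+2$, giving $(k-i)+(r-1)\ge k+1$. So naively the top degree is $k+1$, not $k$. The resolution is that the leading ($n^{k+1}$) terms must cancel in the sum over $r$ and over the distribution of the $t$-budget --- equivalently, one recognizes that the recursion for $\fq^+$ ``is'' the equivariant/refined version of Manin's functional equation for $\varphi$ (Remark~\ref{rem:Manin}), and the non-equivariant specialization $f_k(1) =: \varphi_k$ already satisfies a clean recursion whose solution is known. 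I would instead argue via the \textbf{specialization $q\mapsto$ the Poincar\'e-series variable and comparison with Manin}: set $\Phi(q,t) = \sum \dim H^{2k}(\Mbar_{0,n+1})\,q^n t^k$; by \eqref{4} and the tree description this equals $\Exp$ of the generating series counting $\sT^+$, and Manin's theorem identifies $\Phi$ (or its connected version) as the solution of $y = q + \sum_{r\ge 3}\big(\sum_{i=1}^{r-2}t^i\big)\binom{?}{} y^r$-type equation. Then $\fq_{n,k} = [q^n t^k]\Phi$, and the asymptotics in $n$ for fixed $k$ come from singularity analysis of $\Phi(\cdot,t)$ near its dominant singularity $q = q_c(t)$, or more elementarily by observing that $\fq_{n,k}$ is a polynomial in $n$ of degree $k$ (this polynomiality should follow from the recursion once the degree-$(k+1)$ cancellation is established) and pinning down the leading coefficient by a direct count.

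\textbf{Step 3: Identify the leading coefficient and transfer to $\fp$.} For the leading coefficient: the dominant contribution to $\fq_{n,k}$ for fixed $k$ and $n\to\infty$ should come from weighted rooted trees in $\sT_{n,k}$ that are ``as spread out as possible'' --- concretely a caterpillar/star-like configuration --- and counting these reduces to a clean combinatorial identity yielding $(k+1)^{k-1}/(k!)^2$; here the appearance of $(k+1)^{k-1}$ (a Cayley-type count of labeled forests/trees on $k+1$ vertices, divided by symmetry factors $(k!)^2$ from the two independent labelings) is the hallmark, and I expect it to match a generating-function identity such as $[t^k]$ of a Lambert-$W$-type series. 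Finally, apply Theorem~\ref{thm:intro2}(1): writing $(1+t)\fp = (1+t+qt)\fq - \tfrac12 t(\fq^2 - \fq^{[2]})$ and extracting $q^n t^k$, the term $qt\,\fq$ contributes $\fq_{n-1,k-1}$ (degree $k-1$ in $n$), the term $\tfrac12 t\fq^2$ contributes $\tfrac12\sum_{a+b=n,\,i+j=k-1}\fq_{a,i}\fq_{b,j}$, and $\fq^{[2]}$ only affects even indices so is lower order; the dominant piece is the $\fq$ term itself plus the convolution. A short computation (convolving the $n^{i}$ and $n^{j}$ leading terms via the beta integral, summing over $i+j=k-1$) converts $(k+1)^{k-1}/(k!)^2$ into $(k+1)^{k-2}/(k!)^2$, exactly the claimed formula for $\fp_{n,k}$.

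\textbf{The main obstacle} is Step 2: establishing that the a priori degree-$(k+1)$ contributions to $\fq_{n,k}$ in the recursion cancel, so that $\fq_{n,k}$ genuinely grows like $n^k$ rather than $n^{k+1}$, and doing so in a way that simultaneously yields the precise leading coefficient. I expect the cleanest route is not to fight the recursion directly but to use the identification with Manin's functional equation (via \eqref{4} and the plethystic-exponential structure, exactly as flagged in Remark~\ref{rem:Manin}) together with standard singularity/transfer analysis of the algebraic generating function $\Phi(q,t)$ for fixed $t$ --- or, if one wants to stay elementary, to prove polynomiality of $n\mapsto \fq_{n,k}$ by induction and then determine the top two coefficients by an explicit tree count. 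Everything after that (the passage to $\fp_{n,k}$) is a routine convolution estimate.
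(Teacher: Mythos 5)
Your primary route (Steps 1--2, inducting on $k$ through the recursion for $\fq^+$) has a genuine gap, and it starts from a false premise. You assert that $\fq=\Exp(\fq^+)$ differs from $\fq^+$ ``only by disconnected contributions that are negligible at leading order in $n$,'' but the opposite is true: $\fq^+_{n,k}=\lvert\sT^+_{n,k}\rvert$ counts trees whose root has positive weight, hence trees with at most $k$ vertices, so $\fq^+_{n,k}=O(n^{k-1})$, whereas $\fq_{n,k}\sim n^k$. Already for $k=1$ one has $\fq^+_{n,1}=1$ while $\fq_{n,1}=n-2$; the plethystic exponential supplies the dominant growth, not a correction. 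On top of this, the degree bookkeeping in Step 2 leads you to an apparent $n^{k+1}$ growth whose cancellation you acknowledge you cannot establish; you then defer to either singularity analysis of Manin's equation or ``a direct count,'' neither of which is carried out. (Incidentally, exact polynomiality of $n\mapsto\fq_{n,k}$, which you propose to prove by induction, is also not quite right: the automorphism factors make it only an eventual quasi-polynomial, though this would not affect leading asymptotics.)

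The route that actually works --- and is the paper's --- is the direct count you gesture at in Step 3: since each $U_{(T,w)}$ is a transitive permutation representation, $\fq_{n,k}=\lvert\sT_{n,k}\rvert$ exactly; one stratifies $\sT_{n,k}$ by the shape obtained by deleting all inputs, observes that such a shape has at most $k+1$ vertices (every non-root vertex has weight $\geq1$), and that a shape $T$ with exactly $k+1$ vertices contributes $\frac{1}{k!\,\lvert\Aut(T)\rvert}n^k+o(n^k)$ ways of distributing the $n$ inputs, while shapes with fewer vertices contribute $O(n^{k-1})$. Summing $k!/\lvert\Aut(T)\rvert$ over shapes gives the number $(k+1)^{k-1}$ of labeled trees on $k+1$ vertices by Cayley's formula. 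Your description of this count is imprecise on both points that matter: the dominant stratum consists of \emph{all} tree shapes on $k+1$ vertices, not just ``caterpillar/star-like'' ones, and the two factors of $k!$ arise from (i) the simplex count $\binom{n+k}{k}\sim n^k/k!$ of input distributions and (ii) the passage from labeled to unlabeled shapes, not from ``two independent labelings.'' Your Step 3 transfer to $\fp_{n,k}$ via the wall-crossing identity and the beta-integral convolution is correct in outline and matches the paper, but note that the final identity $c_k-\tfrac12\sum_{j}c_jc_{k-1-j}=(k+1)^{k-2}/k!$ (with $c_k=(k+1)^{k-1}/k!$) is not a triviality; the paper derives it from the functional equation $\fc=\exp(t\fc)$ for the generating series of the $c_k$.
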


From \eqref{4} and the fact that each $U_T$ contains the trivial representation with multiplicity one in its decomposition into irreducible representations, one can see that $\fq_{n,k}$ is exactly the number of weighted rooted trees with $n$ inputs and weight $k$. From this, it can be shown that $\fq_{n,k}$ grows at a polynomial rate of order $k$ as $n$ increases, and that the highest-order coefficient is given by the enumeration of the weighted rooted trees having weight one at each non-root vertex. Combinatorially, such weighted rooted trees can be related to labeled trees on $k+1$ vertices, the number of which is $(k+1)^{k-1}$ according to Cayley's formula. This number appears as the numerator in the formula for $\fq_{n,k}$. The formula for $\fp_{n,k}$ is then obtained by Theorem \ref{thm:intro2} (1).

With Theorem \ref{thm:intro3}, one can easily check that Conjecture \ref{conj} holds asymptotically (Corollary \ref{cor:asymp.p_k}). This provides  another supporting evidence for Conjecture \ref{conj} and also for Conjecture \ref{conj:mult.lc}. Interestingly, $\fp_n$ and $\fq_n$ are not ultra-log-concave (Corollary \ref{cor:ultra.lc}). 
Finally, we remark that our approach recovers \cite[Theorem 1.3]{ACM} (cf. Remark~\ref{rem:ACM}).

\subsection{The layout}

This paper is organized as follows. In \S \ref{sec:symftn}, we summarize necessary facts on $\symS_n$-representations and symmetric functions. In \S \ref{s:review}, we give a brief overview of the algorithm to compute $P$ and $Q$ developed in \cite{CKL}. In \S \ref{s:recursion}, we present the recursive formula for $Q$. In \S \ref{s:invariant}, we prove the recursive formulas for $\fp_n$ and $\fq_n$ of the multiplicities of the trivial representations. In \S \ref{s:asymptotic}, we study the asymptotic log-concavity of $\fp_n$ and $\fq_n$.
\medskip

All cohomology groups are singular cohomology with rational coefficients and all $\symS_n$-representations are over $\QQ$.

\medskip

\noindent \textbf{Acknowledgement.}
We thank Graham Denham, Tao Gui, June Huh, Shiyue Li, Jacob Matherne and Botong Wang for useful discussions.

\bigskip

\section{Preliminaries: $\symS_n$-representations and symmetric functions} \label{sec:symftn}
We review basics of the theory of representations of the symmetric groups and symmetric functions. For references, see \cite{Mac}, \cite{FH} and \cite{GeKa,Get}.
\subsection{Permutation representations}
Let $G$ be a finite group. Let $\Rep(G)$ be the free abelian group generated by irreducible representations of $G$.
\begin{definition}
	Let $H$ be a subgroup of $G$, and let $V$ be a representation of $H$.
	The \emph{induced $G$-representation} 
	of $V$ is defined to be the $G$-representation
\[\Ind^G_HV:=\bigoplus_{i}g_iV\]
where $\{g_i\}_i\subset G$ is the set of representatives of the left cosets in $G/H$, and the $G$-action on $\Ind^G_HV$ is defined as follows: for $g\in G $ and $w=g_iv\in g_iV$,
\[gw:=g_j(hv)\in g_jV\]
where $j$ and $h$ are uniquely determined by $gg_i=g_jh$ in $G$.
\end{definition}

The class of $\Ind^G_H V$ in $\Rep(G)$ does not depend on the choice of representatives $\{g_i\}$. Moreover, one can check that $\Ind^G_HV=\Ind^G_{H'} V$ in $\Rep(G)$ if $H$ and $H'$ are conjugate to each other.

\begin{definition}Let $G$ be a finite group. 
A representation $V$ of $G$ is called a \emph{permutation representation} of $G$ (\emph{spanned by $B$}) if there exists a basis $B$ of $V$ such that $B$ is invariant as a set under the $G$-action on $V$. If the basis $B$ consists of a single orbit of $G$, then $V$ is said to be \emph{transitive}.
	
\end{definition}

Every finite dimensional permutation representation $V$ of $G$ can be decomposed into a direct sum of transitive permutation subrepresentations 
\[V=\bigoplus_{i=1}^dV_i, \qquad \dim V_i^G=1\]
where $d=\dim V^G$ is the dimension of the $G$-invariant subspace of $V$. This decomposition is induced by the orbit decomposition of a $G$-invariant basis of $V$ and hence may not be unique. 

\begin{definition}\label{def:perm.rep}
	Let $H$ be a subgroup of $G$.
	We define the \emph{permutation representation of $G$ associated to $H$} to be the induced $G$-representation
	\[U_H:=\Ind^G_H\mathbbm{1}\]
	of the trivial representation $\mathbbm{1}$ of $H$.
\end{definition}
Note that $U_H$ is transitive, since it is naturally isomorphic to $\bigoplus_{\sigma\in G/H}\Q\cdot \sigma$ with natural $G$-action on it. Conversely, if the $G$-representation $V$ has an invariant basis $B$ consisting of a single $G$-orbit, then as a $G$-set, $B$ is isomorphic to $G/H$ for some $H\subset G$. Hence, $V= U_H$.

\begin{lemma}\label{lem:Schur.pos.perm.rep}
	Let $H,K$ be subgroups of $G$ with $H\subset K$. Then $U_K\subset U_H$ as $G$-representations.
\end{lemma}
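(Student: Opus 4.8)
The plan is to exhibit an explicit $G$-equivariant injection $U_K \hookrightarrow U_H$. Recall $U_H = \Ind_H^G \mathbbm{1}$ can be realized as $\bigoplus_{\sigma H \in G/H} \Q\cdot \sigma H$, the permutation module on the coset space $G/H$, and similarly $U_K = \bigoplus_{\tau K \in G/K} \Q \cdot \tau K$. Since $H \subset K$, there is a canonical $G$-equivariant surjection of $G$-sets $\pi\colon G/H \twoheadrightarrow G/K$ sending $\sigma H \mapsto \sigma K$; its fibers all have the same cardinality $m := [K:H]$.

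First I would define the map $\iota\colon U_K \to U_H$ on basis elements by $\iota(\tau K) := \sum_{\sigma H \in \pi^{-1}(\tau K)} \sigma H$, i.e. each coset of $K$ maps to the sum of the cosets of $H$ lying above it. This is well defined and visibly $G$-equivariant, because $G$ acts on both coset spaces and $\pi$ intertwines the two actions, so $g \cdot \iota(\tau K) = \sum_{\sigma H \in \pi^{-1}(\tau K)} g\sigma H = \sum_{\sigma' H \in \pi^{-1}(g\tau K)} \sigma' H = \iota(g\tau K)$.

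Next I would check injectivity: the images $\{\iota(\tau K)\}_{\tau K \in G/K}$ are sums over pairwise disjoint subsets of the basis $\{\sigma H\}$ of $U_H$ (since the fibers of $\pi$ partition $G/H$), hence they are linearly independent in $U_H$. Therefore $\iota$ identifies $U_K$ with a $G$-subrepresentation of $U_H$, which is exactly the assertion $U_K \subset U_H$. (One can also phrase this via Frobenius reciprocity: $\Hom_G(U_K, U_H) = \Hom_G(\Ind_K^G\mathbbm{1}, \Ind_H^G\mathbbm{1})$, and the transitivity of induction gives $\Ind_H^G\mathbbm{1} = \Ind_K^G \Ind_H^K \mathbbm{1}$, so $\Hom_G(U_K,U_H) = \Hom_K(\mathbbm{1}, \Ind_H^K\mathbbm{1}) = (\Ind_H^K\mathbbm{1})^K$, which contains the trivial summand corresponding to $\iota$; but the explicit construction is cleaner.)

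There is essentially no serious obstacle here — the only thing to be a little careful about is that $U_K \subset U_H$ is being asserted as an honest containment of representations (a subrepresentation), not merely in the Grothendieck group $\Rep(G)$, so one genuinely needs the explicit split injection above rather than just a dimension or character count. The subtlety, if any, is purely bookkeeping: confirming that $\iota$ lands in $U_H$ with image a $G$-stable subspace and that it has trivial kernel, both of which follow immediately from the fiber structure of $\pi\colon G/H \to G/K$.
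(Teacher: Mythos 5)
Your proof is correct and takes essentially the same route as the paper: the paper's one-line proof simply invokes the $G$-equivariant surjection $G/H \to G/K$, and your sum-over-fibers map $\iota(\tau K) = \sum_{\sigma H \in \pi^{-1}(\tau K)} \sigma H$ is precisely the explicit $G$-equivariant injection $U_K \hookrightarrow U_H$ that this surjection induces.
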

\begin{proof}
	This is immediate since $U_H$ and $U_K$ are the permutation representations of $G$ spanned by the left cosets $G/H$ and $G/K$ respectively, and there is a $G$-equivariant surjection $G/H\to G/K$.
\end{proof}

\subsection{Representations of symmetric groups}\label{ss:symrep}
Let $R_n:=\Rep(\symS_n)$ be the free abelian group generated by irreducible representations of $\symS_n$ over $\QQ$.
We set $R(\symS_0):=\Z$. Consider the graded ring
\[R:=\prod_{n\geq0}R_n\]
where the ring structure is given by
\[V.W:=\Ind^{\symS_{m+n}}_{\symS_m\times \symS_n}V\otimes W \in R_{n+m}\]
for $V\in R_n$ and $W\in R_m$.

\begin{definition}\label{def:partition}
For an integer $n>0$, a \emph{partition} of $n$ is a nonincreasing sequence (or a multiset) $\lambda=(\lambda_1,\cdots,\lambda_\ell)$ of positive integers $\lambda_i$ with $\sum_i\lambda_i=n$. The nonzero $\lambda_i$ are called the \emph{parts} of $\lambda$. The number of parts is the \emph{length} of $\lambda$ and denoted by $\ell(\lambda)$. The notation $\lambda \vdash n$ means that $\lambda$ is a partition of $n$.

We sometimes write a partition in the form $\lambda=(\lambda_1^{\ell_1},\cdots, \lambda_m^{\ell_m})$ when $\lambda_i$ have the multiplicities $\ell_i$, and $\lambda_i$ are distinct. In this case, $\ell(\lambda)=\sum_{i=1}^m\ell_i$.

	The \emph{permutation module} $M^{\lambda}$ of $\symS_n$ associated to $\lambda$ is defined to be the permutation representation in Definition~\ref{def:perm.rep}
	\[M^\lambda:=U_{\symS_\lambda}=\Ind^{\symS_n}_{\symS_{\lambda}}\mathbbm{1}\]
	associated to the Young subgroup $\symS_\lambda:=\prod_{i=1}^\ell \symS_{\lambda_i}$  of $\symS_n$ associated to $\lambda$.
	
	For $n=0$, we consider $\lambda=(0)$ to be the unique partition of 0, and set $M^{(0)}:=1$ in $R_0=\Z$.

\end{definition}

It is well known that
$\{M^\lambda\}_{\lambda\vdash n}$ forms a free basis of $R_n$ for $n\geq 0$. So,
\[R=\ZZ\llbracket M^{(1)},M^{(2)},\cdots~ \rrbracket\]
is the power series ring in the trivial representations $\{M^{(n)}\}_{n\geq 1}$.

For each $\lambda\vdash n$, there exists an irreducible $\symS_n$-representation $S^\lambda$, called a \emph{Specht module}.
Each $S^\lambda$ is irreducible and $\{S^\lambda\}_{\lambda\vdash n}$ is the complete set of irreducible $\symS_n$-representations.
The Specht module $S^\lambda$ is a subrepresentation of $M^\lambda$, appearing with multiplicity one in the decomposition of $M^\lambda$ into irreducible representations.

For example, when $\lambda =(n)$, $M^{(n)}=S^{(n)}$ is the trivial representation, and when $\lambda=(1,\cdots,1)=(1^n)$,  $M^{(1^n)}$ is the regular representation
\beq \label{eq:reg.rep} M^{(1^n)}=\sum_{\lambda \vdash n}(\dim S^\lambda)\cdot S^\lambda\eeq
and $S^{(1^n)}$ is
the sign representation of $\symS_n$. For more details, see \cite{FH}.

\subsection{Frobenius characteristic map} \label{ss:Frob}

We denote by
\[\Lambda:=\lim_{\longleftarrow}\Z\llbracket x_1,\cdots,x_n\rrbracket ^{\symS_n} \]
the ring of symmetric functions in variables $\{x_i\}_{i\geq1}$, where $(-)^{\symS_n}$ denotes the invariant subring under the action of $\symS_n$ permuting $x_i$.
Let $\Lambda_n\subset \Lambda$ denote the subgroup consisting of elements of (total) degree $n$ in $x_i$.

	For $n\geq 1$, we denote by $\sfh_n$ the $n$-th complete homogeneous symmetric function and by $\sfp_n$ the $n$-th power sum symmetric function
	\[\sfh_n:=\sum_{1\leq i_1\leq \cdots\leq i_n}x_{i_1}\cdots x_{i_n} \quad \text{and} \quad \sfp_n:=\sum_{i\geq1}x_i^n,\]
and we set $\sfh_0=1$ for convenience. For a partition $\lambda=(\lambda_1,\cdots,\lambda_\ell)$ of $n$, we write
\[\sfh_{\lambda}:=\sfh_{\lambda_1}\cdots \sfh_{\lambda_\ell} \and \sfp_{\lambda}:=\sfp_{\lambda_1}\cdots \sfp_{\lambda_\ell}.\]

The symmetric functions $\sfh_n$ and $\sfp_n$ are related by the identity
\[\sum_{n\geq0}\sfh_n=\exp\sum_{n\geq1}\frac{\sfp_n}{n},\]
where $\exp$ denotes the usual exponential function. Equivalently, we can express $\sfh_n$ as a linear combination of the $\sfp_\lambda$. For a partition $\lambda$ of $n$, define
\[ z_\lambda = \prod_{i\ge 1} i^{m_i}\cdot m_i !, \]
where $m_i$ is the number of parts of $\lambda$ equal to $i$. Then we have
\[ \sfh_n = \sum_{\lambda\vdash n} z_\lambda^{-1} \sfp_\lambda. \]

It is well known that $\{\sfh_\lambda\}_{\lambda\vdash n}$ and $\{\sfp_\lambda\}_{\lambda\vdash n}$ form bases of $\Lambda_n$ and $\Lambda_n\otimes_\Z\Q$ respectively, for all $n$. In particular,
\[\Lambda=\ZZ\llbracket \sfh_1,\sfh_2,\cdots~\rrbracket  \and \Lambda\otimes_\Z\Q=\QQ\llbracket \sfp_1,\sfp_2,\cdots~\rrbracket \]
are power series rings in $\sfh_n$ and $\sfp_n$ respectively (see \cite[\S I.2]{Mac} or \cite[\S5.1]{Get}).

The $n$-th \emph{Frobenius characteristic} map $\ch_{\symS_n}:R_n\to \Lambda_n$ is defined by
\[\ch_{\symS_n}(V)=\frac{1}{n!}\sum_{\sigma \in \symS_n}\mathrm{Tr}_V(\sigma)\sfp_\sigma\]
for $V\in R_n$ and $\sfp_\sigma:=\sfp_{\lambda(\sigma)}$, where $\mathrm{Tr}_V(\sigma)$ denotes the trace of the linear endomorphism of $V$ defined by $\sigma$, and $\lambda(\sigma)$ denotes the partition of $n$ corresponding to the cycle type of the permutation $\sigma$. We set $\ch_0$ to be the identity on $R_0=\Lambda_0=\Z$. One can check that
\beq \label{eq:ch.M.to.h}\ch_{\symS_n}(M^\lambda)=\sfh_\lambda \quad  \text{ for every }\lambda\vdash n.\eeq
In particular, $\ch_{\symS_n}(R_n)\subset \Lambda_n$ so that $\ch_{\symS_n}$ is well defined in integer coefficients. It is well known that $\ch_{\symS_n}$ is an isomorphism (\cite[(I.7.3)]{Mac}).

The \emph{Schur functions} $\sfs_\lambda$ associated to $\lambda\vdash n$ are the images of $S^\lambda\in R_n$ under Frobenius characteristic map, that is, $\ch_{\symS_n}(S^\lambda)=\sfs_\lambda$. In particular, $\sfs_{(n)}= \sfh_n$. The Schur functions form a basis for $\Lambda_n$, as $S^\lambda$ do for $R_n$.

An element of $\Lambda$ is said to be \emph{Schur-positive} if it is a linear combination of $\sfs_\lambda$ with nonnegative coefficients, or equivalently, if it is the image under $\ch$ of the sum of genuine representations in $R$. 

\medskip

Let $R\llbracket t \rrbracket$ and $\Lambda\llbracket t \rrbracket$ be the power series rings in a formal variable $t$ with coefficient rings $R$ and $\Lambda$ respectively, so that
\beq\label{eq:power.rings}R\llbracket t \rrbracket=\Z\llbracket t,M^{(1)},M^{(2)}, \cdots ~\rrbracket\and \Lambda\llbracket t \rrbracket=\Z\llbracket t , \sfh_1,\sfh_2,\cdots~\rrbracket.\eeq
The above maps $\{\ch_{\symS_n}\}_{n\geq 0}$ naturally extend to the \emph{Frobenius characteristic} map 
$\ch :R\llbracket t\rrbracket \lra \Lambda\llbracket t\rrbracket$, which is a graded ring homomorphism.

\subsection{Plethysm}\label{ss:plethysm}
Plethysm is another associative product 
on $\Lambda$
denoted by $\circ$, which is uniquely determined by the following properties:
\begin{enumerate}
	\item $F\circ \sfp_n=\sfp_n\circ F=F(x_1^n,x_2^n,\cdots)$
	\item $(F+G)\circ H=F\circ H+G\circ H$
	\item $(FG)\circ H=(F\circ H)(G\circ H)$
\end{enumerate}
for $F,G,H\in \Lambda$ and $n \geq1$. Note that $\sfh_1=\sfp_1$ is the two-sided identity by (1).
This naturally extends to a multiplication on $\Lambda\llbracket t\rrbracket $ satisfying
\begin{enumerate}
	\item[(1$'$)] $F\circ \sfp_n=\sfp_n\circ F = F(t^n,x_1^n,x_2^n,\cdots)$ and $t\circ F=t$
\end{enumerate}
and the properties (2)--(3)
for $F
, G, H\in \Lambda\llbracket t\rrbracket $ and $n\geq 1$.
For more details, see \cite[\S I.8]{Mac}, \cite[\S5.2]{Get} or \cite[\S7.2]{GeKa}.

\medskip
We will mostly use the operation $\sfh_r\circ(-)$ with $r\geq0$. Note that when $r=0$, we have $\sfh_0\circ F=1$ for every $F\in \Lambda\llbracket t\rrbracket $ by the property (3).

Since $\sfh_r = \sum_{\lambda\vdash r} z_\lambda^{-1} \sfp_\lambda$, by its properties the plethysm can be explicitly computed as
\[\sfh_r\circ F = \sum_{\lambda\vdash r} z_\lambda^{-1} \sfp_\lambda \circ F = \sum_{\lambda\vdash r} z_\lambda^{-1} \prod_{i=1}^{\ell(\lambda)} F(t^{\lambda_i},z_1^{\lambda_i},z_2^{\lambda_i},\cdots). \]
In particular, one can check that $\sfh_r\circ (tF) =t^r(\sfh_r\circ F)$.
\medskip

For $F=\sum_{n,k\geq 0}F_{n,k}\in \Lambda\llbracket t\rrbracket $ with $F_{n,k}\in \Lambda_n$, write
\[F_{\leq (n,k)}:=\sum_{0\leq i\leq n,\; 0\leq j\leq k}F_{i,j}t^j\]
for its truncation up to degrees $(n,k)$.
\begin{lemma}\label{lem:plethysm.exp}
	Let $r\geq 1$. Let $(F_j)_{j\in \Z_{\geq 1}}$ be a sequence of elements $F_j\in \Lambda\llbracket t\rrbracket $ such that $F:=\sum_{j\geq 1}F_j$ is well defined, that is, for every pair $n,k$, there exist only finitely many $F_j$ with nonzero components in  $\Lambda_nt^k$. Then,
    \beq \label{eq:plethysm.exp} \sfh_r\circ F=\sum_{\substack{(r_1,\cdots,r_m)\vdash r}}\;\sum_{\substack{i_1,\cdots,i_m>0\\ \text{distinct}}}\;\prod_{j=1}^m \sfh_{r_j}\circ F_{i_j}\eeq
 where $(r_1,\cdots,r_m)$ runs over all the partitions of $r$ (with $m,r_j>0$).
\end{lemma}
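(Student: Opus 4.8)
The plan is to reduce the identity \eqref{eq:plethysm.exp} to the multiplicativity of plethysm over sums, using the standard combinatorial description of $\sfh_r\circ(-)$ in terms of power sums. First I would recall that $\sfh_r = \sum_{\lambda\vdash r} z_\lambda^{-1}\sfp_\lambda$, so by the distributive and multiplicative properties (2) and (3) of plethysm it suffices to understand $\sfp_\ell\circ F$ for a single power sum $\sfp_\ell$ and then recombine. By property (1$'$), $\sfp_\ell\circ F = \sfp_\ell\circ\bigl(\sum_{j\geq 1}F_j\bigr)$, and since $\sfp_\ell\circ(-)$ is the ring endomorphism $x_i\mapsto x_i^\ell$, $t\mapsto t^\ell$, it is additive: $\sfp_\ell\circ F = \sum_{j\geq 1}\sfp_\ell\circ F_j$. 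The well-definedness hypothesis on $(F_j)$ guarantees all the infinite sums that appear converge in the $(q,t)$-adic (here $\Lambda_n t^k$-graded) topology, so these manipulations are legitimate.

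Next I would expand $\sfh_r\circ F = \sum_{\lambda\vdash r} z_\lambda^{-1}\prod_{b=1}^{\ell(\lambda)}(\sfp_{\lambda_b}\circ F)$, substitute $\sfp_{\lambda_b}\circ F = \sum_{j}\sfp_{\lambda_b}\circ F_j$, and expand the product over the parts of $\lambda$. This produces a sum over functions assigning to each part of $\lambda$ an index $j$; grouping the parts by which index they are assigned to, a term is indexed by a choice of finitely many distinct indices $i_1,\dots,i_m$ together with a partition $\mu^{(t)}$ of the multiset of parts of $\lambda$ sent to $i_t$. Reorganizing the sum so that the outer sum is over the data $(i_1,\dots,i_m$ distinct$)$ and the compositions $r_t := |\mu^{(t)}|$ with $r_1+\cdots+r_m = r$, the $z_\lambda^{-1}$ factor splits: the combinatorial identity I need is that for fixed $r_1,\dots,r_m$ summing to $r$,
\[
\sum_{\substack{\lambda\vdash r \\ \text{refining } (r_1,\dots,r_m)}} z_\lambda^{-1}\prod_{\text{parts}} (\cdots) \;=\; \prod_{t=1}^m\Bigl(\sum_{\mu^{(t)}\vdash r_t} z_{\mu^{(t)}}^{-1}\prod_{\text{parts of }\mu^{(t)}}(\sfp\circ F_{i_t})\Bigr) \;=\; \prod_{t=1}^m \sfh_{r_t}\circ F_{i_t},
\]
which is exactly the multinomial-type factorization of $z_\lambda$ when $\lambda$ is assembled from blocks $\mu^{(t)}$ — this is the content of the classical identity $\sum_n \sfh_n = \exp\sum_n \sfp_n/n$ being a ring homomorphism property, i.e. $\Exp$ turns sums into products. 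I should be slightly careful that the unordered choice of $\{i_1,\dots,i_m\}$ versus ordered choice matches the $z_\lambda$ bookkeeping: since the $i_t$ are required distinct and each carries its own partition $\mu^{(t)}$, and $\sfh_{r_t}$ is symmetric in nothing relevant here, the symmetry factors cancel correctly, and one lands precisely on the right-hand side of \eqref{eq:plethysm.exp} with $(r_1,\dots,r_m)$ ranging over partitions of $r$ and $i_1,\dots,i_m$ over distinct positive integers.

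The main obstacle I anticipate is purely bookkeeping: carefully matching the combinatorial factors so that the passage from "partitions $\lambda$ of $r$ together with a coloring of parts by distinct indices" to "a partition $(r_1,\dots,r_m)$ of $r$ together with distinct indices $i_1,\dots,i_m$ and partitions $\mu^{(t)}\vdash r_t$" is a bijection respecting $z_\lambda^{-1} = \prod_t z_{\mu^{(t)}}^{-1}$ (valid because $z$ is multiplicative over disjoint unions of partitions). There is no analytic difficulty once convergence is granted by the hypothesis. An alternative, cleaner route — which I might adopt instead to avoid the index-matching entirely — is to first establish the case $r=$ arbitrary but $F = F_1 + F_2$ a sum of two terms, namely $\sfh_r\circ(F_1+F_2) = \sum_{a+b=r}(\sfh_a\circ F_1)(\sfh_b\circ F_2)$ (the "addition formula" for $\sfh$, immediate from $\Exp(F_1+F_2)=\Exp(F_1)\Exp(F_2)$), then iterate/induct on the number of summands and pass to the limit using the finiteness hypothesis; collecting equal $\sfh_0\circ F_j = 1$ factors then yields exactly \eqref{eq:plethysm.exp}. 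I would likely present this second approach as the main line and relegate the first to a remark.
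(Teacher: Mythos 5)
Your second route --- the two-term addition formula $\sfh_r\circ(F_1+F_2)=\sum_{a+b=r}(\sfh_a\circ F_1)(\sfh_b\circ F_2)$, induction on the number of summands, and a truncation argument using the finiteness hypothesis to pass to infinitely many summands --- is exactly the paper's proof, which quotes the addition formula from \cite[(I.8.8)]{Mac}. One caution: do not justify that addition formula by ``$\Exp(F_1+F_2)=\Exp(F_1)\Exp(F_2)$,'' since in this paper the multiplicativity of $\Exp$ (Lemma~\ref{lem:Exp.homo}) is itself deduced from the present lemma; cite Macdonald directly or derive it via your power-sum expansion to avoid circularity.
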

\begin{proof}
	The assertion is known to be true when $F=\sum_{j=1}^mF_j$ is a finite sum:
	one can check this holds by using the formula (\cite[(I.8.8)]{Mac})
	\beq\label{eq:plethysm.exp.two} \sfh_r\circ(F+G)=\sum_{i=0}^r(\sfh_{r-i}\circ F)(\sfh_{i}\circ G)\eeq
	for $F,G\in \Lambda\llbracket t\rrbracket $ and by the induction on the number $m$ of summands.	
	
	On the other hand, for every $n,k>0$, we have an equality
	\beq\label{eq:trunc}\left(\sfh_r\circ F\right)_{\leq (n,k)}=\left(\sfh_r\circ F_{\leq (n,k)}\right)_{\leq (n,k)}\eeq
	for any $F\in \Lambda\llbracket t\rrbracket $.
	Indeed, if we let $G=F-F_{\leq (n,k)}$, then by \eqref{eq:plethysm.exp.two},
	\[\sfh_r\circ F=\sfh_r\circ F_{\leq (n,k)}+\sum_{i=1}^r(\sfh_{r-i}\circ F_{\leq (n,k)})(\sfh_{i}\circ G)\]
	with $(\sfh_i\circ G)_{\leq (n,k)}=0$ for $i>0$.
	By \eqref{eq:trunc}, the assertion holds if it does under the finiteness assumption on the summands. This completes the proof.
\end{proof}
\begin{example}[$r=2$]\label{ex:antisym} Let $F\in \Lambda\llbracket t\rrbracket $ be as above. Then,
	\[\sfh_2\circ \sum_{i}F_i=\sum_i \sfh_2\circ F_i +\sum_{i<j}F_iF_j.\]
	Recall that $\sfs_{(1,1)}$ is the Frobenius characteristic  of the (one-dimensional) sign representation of $\symS_2$ and satisfies
	$\sfs_{(1,1)}=\sfh_1^2-\sfh_2$.
	Hence,
	we have
	\beq \label{eq:antisymm}\sfs_{(1,1)}\circ F=F^2-\sfh_2\circ F=\sum_{i}\sfs_{(1,1)}\circ F_i+\sum_{i<j}F_iF_j.\eeq
\end{example}

The following shows how the plethysm $\sfh_r\circ(-)$ reads on $R$ through $\ch$.

\begin{lemma}\label{lem:plethysm.Ind}
	Let $r,n\geq1$.
	Let $V\in R_n$ be a representation of $\symS_n$. 
	Then,
	\[\sfh_r\circ \ch_{\symS_n}(V)=\ch_{\symS_{rn}}\left(\Ind^{\symS_{rn}}_{(\symS_n)^r\rtimes \symS_r}V^{\otimes r}\right)\]
	where the $(\symS_n)^r\rtimes \symS_r$-action on $V^{\otimes r}$ is defined by the component-wise action of $(\symS_n)^r$ and the action of $\symS_r$ permuting the factors of $V^{\otimes r}$.
\end{lemma}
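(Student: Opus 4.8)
The plan is to reduce the statement to the definition of the Frobenius characteristic map and the standard formula expressing $\sfh_r$ in terms of power sums. Write $\chi = \mathrm{Tr}_V$ for the character of $V$, and for a permutation $\sigma \in \symS_{rn}$ let $\lambda(\sigma)$ denote its cycle type. The right-hand side is computed by applying $\ch_{\symS_{rn}}$ to $W := \Ind^{\symS_{rn}}_{G}V^{\otimes r}$, where $G := (\symS_n)^r \rtimes \symS_r$ is the wreath product $\symS_n \wr \symS_r$, embedded in $\symS_{rn}$ in the usual block-permutation way. So the core task is to show
\[
\frac{1}{(rn)!}\sum_{\sigma \in \symS_{rn}} \mathrm{Tr}_W(\sigma)\,\sfp_{\lambda(\sigma)} \;=\; \sfh_r \circ \ch_{\symS_n}(V).
\]

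First I would compute the left-hand side using the induction formula for characters: $\mathrm{Tr}_W(\sigma) = \frac{1}{|G|}\sum_{g \in \symS_{rn},\ g^{-1}\sigma g \in G} \mathrm{Tr}_{V^{\otimes r}}(g^{-1}\sigma g)$, so the whole sum collapses (after absorbing the conjugation average) to $\frac{1}{|G|}\sum_{\tau \in G}\mathrm{Tr}_{V^{\otimes r}}(\tau)\,\sfp_{\lambda(\tau)}$, where now $\lambda(\tau)$ is the cycle type of $\tau$ as an element of $\symS_{rn}$. Next, the standard description of conjugacy classes in the wreath product: an element $\tau = (g_1,\dots,g_r;\pi)$ with $\pi \in \symS_r$ acts on $V^{\otimes r}$, and $\mathrm{Tr}_{V^{\otimes r}}(\tau)$ is multiplicative over the cycles of $\pi$ — for a cycle of $\pi$ of length $c$ involving blocks $i_1,\dots,i_c$, the contribution is $\chi\!\left(g_{i_c}g_{i_{c-1}}\cdots g_{i_1}\right)$, i.e. the character of $V$ evaluated at the ordered product. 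Simultaneously, a $c$-cycle of $\pi$ sends the $c$ blocks (each of size $n$) to a union of cycles in $\symS_{rn}$ whose lengths are exactly $c$ times the cycle lengths of that ordered product $h := g_{i_c}\cdots g_{i_1} \in \symS_n$; hence the $\sfp$-monomial contributed by this part of $\tau$ is $\prod_{j} \sfp_{c\,\mu_j} = \sfp_\mu \circ \sfp_c$ where $\mu = \lambda(h)$ — this is precisely where the plethystic substitution $\sfp_c \mapsto$ "power of variables" enters.

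Assembling these, and reorganizing the sum over $\tau \in G$ as: a choice of $\pi \in \symS_r$ (grouped by cycle type, i.e. a partition $(c_1,\dots,c_m)$ of $r$), and for each cycle a free choice of the "monodromy" $h \in \symS_n$ (the other $g_i$ along the cycle can be gauged away, contributing the factor $|\symS_n|^{c-1}$ that cancels against $|G|$ in the right way), one gets
\[
\ch_{\symS_{rn}}(W) \;=\; \sum_{(c_1,\dots,c_m)\vdash r} z_{(c_1,\dots,c_m)}^{-1}\prod_{j=1}^{m}\Big(\frac{1}{n!}\sum_{h\in\symS_n}\chi(h)\,\sfp_{\lambda(h)}\Big)\!\circ \sfp_{c_j},
\]
where the combinatorial bookkeeping produces exactly the factor $z_{(c_1,\dots,c_m)}^{-1}$ counting $\pi$'s. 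The inner sum is $\ch_{\symS_n}(V)$, so the expression is $\sum_{\mu\vdash r} z_\mu^{-1}\,\sfp_\mu \circ \ch_{\symS_n}(V) = \big(\sum_{\mu\vdash r} z_\mu^{-1}\sfp_\mu\big)\circ \ch_{\symS_n}(V) = \sfh_r \circ \ch_{\symS_n}(V)$, using the identity $\sfh_r = \sum_{\mu\vdash r} z_\mu^{-1}\sfp_\mu$ recalled in \S\ref{ss:Frob} together with properties (2)--(3) of the plethysm.

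The main obstacle is the bookkeeping in the third step: correctly identifying, for a wreath-product element, both the $\symS_{rn}$-cycle type (which governs the $\sfp$-monomial) and the trace on $V^{\otimes r}$ in terms of the ordered cycle-products in $\symS_n$, and then checking that summing over the redundant $g_i$'s exactly converts $|G|^{-1}$ into $z_\mu^{-1}$. This is classical (it is the representation-theoretic content of "$\ch$ intertwines induction from wreath products with plethysm", cf. \cite[\S I.8]{Mac}), so I would either cite it directly or carry out this reorganization explicitly; no genuinely hard estimate is involved, only careful indexing.
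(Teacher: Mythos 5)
Your proposal is correct, and the computation it outlines does go through. The paper itself gives no argument for this lemma: its ``proof'' is a bare citation to \cite[Remark I.8.2 and (I.A.6.2)]{Mac}. What you have written is precisely the classical computation that citation stands for, so the two ``approaches'' coincide in substance; yours simply makes the argument self-contained. For the record, the bookkeeping you flag as the main obstacle does close up correctly: the collapse of $\ch_{\symS_{rn}}(\Ind^{\symS_{rn}}_G V^{\otimes r})$ to $\frac{1}{\lvert G\rvert}\sum_{\tau\in G}\mathrm{Tr}_{V^{\otimes r}}(\tau)\,\sfp_{\lambda(\tau)}$ is immediate from the induced character formula and conjugation-invariance of cycle type; for $\tau=(g_1,\dots,g_r;\pi)$ with $\pi$ of cycle type $\mu=(c_1,\dots,c_m)$, each $c$-cycle contributes $\chi(h)\,(\sfp_{\lambda(h)}\circ\sfp_c)$ with $h$ the ordered cycle product, and summing over the $g_i$'s along that cycle with $h$ fixed gives $(n!)^{c-1}$; combining $\prod_j (n!)^{c_j-1}=(n!)^{r-m}$ with $\lvert G\rvert^{-1}=((n!)^r\, r!)^{-1}$ and the count $r!/z_\mu$ of permutations of cycle type $\mu$ yields exactly $z_\mu^{-1}\,\sfp_\mu\circ\ch_{\symS_n}(V)$, and summing over $\mu\vdash r$ gives $\sfh_r\circ\ch_{\symS_n}(V)$ via $\sfh_r=\sum_{\mu\vdash r}z_\mu^{-1}\sfp_\mu$. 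Either carrying this out or citing Macdonald directly, as the paper does, is acceptable.
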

\begin{proof}
	See \cite[Remark I.8.2 and (I.A.6.2)]{Mac}.
\end{proof}

\subsection{Plethystic exponential}\label{ss:Exp}
We recall the plethystic exponential, denoted by $\Exp(-)$, and its property. For more details, see \cite[\S8.4]{GeKa}.

Denote by $\Lambda\llbracket t\rrbracket _+\subset \Lambda\llbracket t\rrbracket $ the subgroup consisting of elements with no constant terms.
\begin{definition}[Plethystic exponential]
	For $F\in \Lambda\llbracket t\rrbracket _+$,
	define
	\[\Exp(F):=\exp\sum_{r\geq 1}\frac{\sfp_r}{r}\circ F =\sum_{r\geq0}\sfh_r\circ F.\]
\end{definition}
\noindent This is an element of $1+\Lambda\llbracket t\rrbracket _+\subset \Lambda\llbracket t\rrbracket $, since $\sfh_0\circ(-)=1$.

\begin{lemma}\label{lem:Exp.homo}
	$\Exp(F+G)=\Exp(F)\Exp(G)$ for $F$, $G\in \Lambda\llbracket t\rrbracket _+$. In particular, $\Exp(-F)$ is the multiplicative inverse of $\Exp(F)$.
\end{lemma}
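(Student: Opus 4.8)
The plan is to expand both sides using the defining identity $\Exp(H)=\sum_{r\ge 0}\sfh_r\circ H$ for $H\in\Lambda\llbracket t\rrbracket_+$ together with the addition formula \eqref{eq:plethysm.exp.two}, and then to recognize a Cauchy product.

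First I would apply \eqref{eq:plethysm.exp.two}, which gives $\sfh_r\circ(F+G)=\sum_{i=0}^{r}(\sfh_{r-i}\circ F)(\sfh_i\circ G)$ for every $r\ge 0$ (the case $r=0$ reading $1=1\cdot 1$). Summing over $r\ge 0$ yields
\[\Exp(F+G)=\sum_{r\ge 0}\sfh_r\circ(F+G)=\sum_{r\ge 0}\sum_{i=0}^{r}(\sfh_{r-i}\circ F)(\sfh_i\circ G).\]
Next I would rearrange this double sum into the product $\bigl(\sum_{j\ge 0}\sfh_j\circ F\bigr)\bigl(\sum_{i\ge 0}\sfh_i\circ G\bigr)$. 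This is the one step that deserves a word of justification: since $F,G\in\Lambda\llbracket t\rrbracket_+$ have no constant term, for each bidegree $(n,k)$ only finitely many pairs $(j,i)$ contribute a nonzero component of $(\sfh_j\circ F)(\sfh_i\circ G)$ in $\Lambda_n t^k$ — this is precisely the finiteness that makes $\Exp(F)$, $\Exp(G)$, and their product well defined — so the reindexing is legitimate termwise in every bidegree. The right-hand side is then $\Exp(F)\,\Exp(G)$, which proves the first assertion.

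For the ``in particular'' clause I would first record that $\Exp(0)=1$: indeed $\sfh_0\circ 0=1$, while for $r\ge 1$ we have $\sfh_r\circ 0=0$, because $\sfh_r$ is a polynomial in the $\sfp_i$ with no constant term and $\sfp_i\circ 0=0$ by property~(1$'$) of the plethysm. Since $\Lambda\llbracket t\rrbracket_+$ is a subgroup, $-F\in\Lambda\llbracket t\rrbracket_+$, so applying the first part to $F$ and $-F$ gives $\Exp(F)\,\Exp(-F)=\Exp(F+(-F))=\Exp(0)=1$; hence $\Exp(-F)$ is the multiplicative inverse of $\Exp(F)$ in $\Lambda\llbracket t\rrbracket$.

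I expect no serious obstacle here: the only non-formal point is the rearrangement of the double series, which is controlled by the grading exactly as in the proof of Lemma~\ref{lem:plethysm.exp}. (An alternative route is to use the power-sum description $\Exp(H)=\exp\sum_{r\ge1}\tfrac{\sfp_r}{r}\circ H$ and note that $\sfp_r\circ(-)$ is additive by property~(1$'$), so that $\sum_{r\ge1}\tfrac{\sfp_r}{r}\circ(F+G)=\sum_{r\ge1}\tfrac{\sfp_r}{r}\circ F+\sum_{r\ge1}\tfrac{\sfp_r}{r}\circ G$, after which the ordinary exponential on the commutative ring $\Lambda\llbracket t\rrbracket\otimes\Q$ turns sums into products; I would prefer the first route since it stays within $\Lambda\llbracket t\rrbracket$ and avoids tracking denominators.)
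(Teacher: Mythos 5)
Your argument is the same as the paper's: apply \eqref{eq:plethysm.exp.two}, sum over $r$, recognize the Cauchy product to get $\Exp(F)\Exp(G)$, and deduce the inverse statement from $\Exp(F)\Exp(-F)=\Exp(0)=1$. You supply slightly more detail (the bidegree finiteness justifying the reindexing, and the explicit check that $\Exp(0)=1$), but the route is identical.
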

\begin{proof}
	By Lemma~\ref{lem:plethysm.exp} or \eqref{eq:plethysm.exp.two}, we have
	\[\begin{split}
		\Exp(F+G)&=\sum_{r\geq0}\sum_{i+j=r,\;i,j\geq0}(\sfh_i\circ F)(\sfh_j\circ G)
		=\Exp(F)\Exp(G).
	\end{split}
	\]
	The second assertion holds as $\Exp(F)\Exp(-F)=\Exp(0)=1$.
\end{proof}
\begin{example} \label{ex:plethysm.-h1}
	$\Exp(\sfh_1 )=\sum_{r\geq0}\sfh_r$, since $\sfh_r\circ \sfh_1=\sfh_r$ for every $r\geq 0$.
	Let
	\[\sfe_r:=\sum_{1\leq i_1<\cdots<i_r}x_{i_1}\cdots x_{i_r}\]
	denote the $r$-th elementary symmetric function, where $\sfe_0=1=\sfh_0$. Then,
 	$\sfe_r= \sfs_{(1^r)}$ and $0=\sum_{i=0}^r(-1)^i\sfe_i\sfh_{r-i}$ for $r>0$. (\cite[I.(2.6), I.(3.9)]{Mac}) In particular, we have
	\[\Exp(-\sfh_1)=\sum_{r\geq0}(-1)^r\sfe_r\]
	since it is the multiplicative inverse of $\Exp(\sfh_1)$. By the same argument,
	\[\sfh_r\circ (-F)=(-1)^r \sfe_r\circ F\]
	for every $F\in \Lambda\llbracket t\rrbracket $.
	See also \cite[Examples~I.8.1.(a)]{Mac}.
\end{example}

\begin{proposition}\cite[Proposition~8.5]{GeKa}
	The map $\Exp:\Lambda\llbracket t\rrbracket _+\to 1+\Lambda\llbracket t\rrbracket _+$ is invertible over $\Q$, with the inverse
	\beq \label{eq:Log}\Log(F):=\sum_{r\geq1}\frac{\mu(r)}{r}\log (\sfp_r)\circ F=\sum_{r\geq1}\frac{\mu(r)}{r}\log \left(\sfp_r\circ F\right)\eeq
	where $\mu$ is the M\"obious function and $\log$ is the usual log function.
\end{proposition}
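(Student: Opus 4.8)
The plan is to express both $\Exp$ and $\Log$ through the Adams operations $\psi^r:=\sfp_r\circ(-)$ and to reduce the statement to M\"obius inversion on the positive integers. I work over $\Q$: tensor $\Lambda\llbracket t\rrbracket$ with $\Q$ and complete with respect to the grading $\deg t=\deg\sfh_1=1$, $\deg\sfh_n=n$, whose graded pieces are finite dimensional. Write $\mathfrak m$ for the augmentation ideal, so that $\mathfrak m^k$ is the degree-$\ge k$ part and the ambient ring is $\mathfrak m$-adically complete; there the usual power series $\exp\colon\mathfrak m\to1+\mathfrak m$ and $\log\colon1+\mathfrak m\to\mathfrak m$ are continuous and mutually inverse. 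By the defining property (1$'$) of \S\ref{ss:plethysm}, $\psi^r$ is the substitution $F\mapsto F(t^r,x_1^r,x_2^r,\cdots)$, hence a continuous $\Q$-algebra endomorphism with $\psi^1=\id$, $\psi^r\psi^s=\psi^{rs}$ and $\psi^r(\mathfrak m^k)\subseteq\mathfrak m^{rk}$; being a continuous ring homomorphism, it satisfies $\psi^r(\exp H)=\exp(\psi^r H)$ for $H\in\mathfrak m$ and $\psi^r(\log F)=\log(\psi^r F)$ for $F\in1+\mathfrak m$.

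First I would rewrite the two maps. Since $\tfrac{\sfp_r}{r}\circ F=\tfrac{1}{r}(\sfp_r\circ F)=\tfrac{1}{r}\psi^r(F)$ by $\Q$-linearity of the plethysm in its first argument, the definition of $\Exp$ reads $\Exp(F)=\exp\big(L(F)\big)$ with $L(F):=\sum_{r\ge1}\tfrac{1}{r}\psi^r(F)$; this converges because $\psi^r(F)\in\mathfrak m^r$ for $F\in\mathfrak m$, and clearly $L(\mathfrak m)\subseteq\mathfrak m$. Likewise, combining the second form of $\Log$ in \eqref{eq:Log} with $\psi^r(\log F)=\log(\psi^r F)$ gives $\Log(F)=M(\log F)$ with $M(G):=\sum_{r\ge1}\tfrac{\mu(r)}{r}\psi^r(G)$ and $M(\mathfrak m)\subseteq\mathfrak m$; the first form of \eqref{eq:Log} is the same statement under the convention $\log(\sfp_r)\circ F:=\log(\sfp_r\circ F)$. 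In particular $\Log$ is well defined on $1+\mathfrak m$ with values in $\mathfrak m$.

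The crux is that $L$ and $M$ are mutually inverse $\Q$-linear automorphisms of $\mathfrak m$. Using additivity and continuity of the $\psi^r$, the relation $\psi^r\psi^s=\psi^{rs}$, and the fact that an $\mathfrak m$-adically convergent double series may be reindexed (for each $n$ only finitely many pairs $(r,s)$ with $rs=n$ occur, and $\psi^n(G)\in\mathfrak m^n$), one computes
\[
M\big(L(F)\big)=\sum_{r,s\ge1}\frac{\mu(r)}{rs}\,\psi^{rs}(F)=\sum_{n\ge1}\Big(\frac{1}{n}\sum_{r\mid n}\mu(r)\Big)\psi^n(F)=\psi^1(F)=F,
\]
by the identity $\sum_{r\mid n}\mu(r)=\delta_{n,1}$, and symmetrically $L\big(M(G)\big)=G$. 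Combining this with the preceding paragraph: for $F\in\mathfrak m$ one gets $\Log(\Exp(F))=M\big(\log\exp L(F)\big)=M(L(F))=F$, and for $F\in1+\mathfrak m$ one gets $\Exp(\Log(F))=\exp\big(L(M(\log F))\big)=\exp(\log F)=F$. Hence $\Exp$ is a bijection $\Lambda\llbracket t\rrbracket_+\to1+\Lambda\llbracket t\rrbracket_+$ over $\Q$ with inverse $\Log$.

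I expect the only genuine obstacle to be the topological bookkeeping: one must check that $\mathfrak m$-adic convergence survives each manipulation — that the series defining $L$, $M$, $\Exp$, $\Log$ converge, that $\psi^r$ passes termwise through $\exp$, $\log$ and through infinite sums, and that the double series for $M\circ L$ may be regrouped along the divisor relation $n=rs$ — all of which follow from the single estimate $\psi^r(\mathfrak m^k)\subseteq\mathfrak m^{rk}$ together with completeness. The arithmetic input is only M\"obius inversion, and once $\Exp$ and $\Log$ are written via $L$ and $M$ the plethystic algebra collapses to manipulations of the single commuting family $\{\psi^r\}_{r\ge1}$ of Adams operations.
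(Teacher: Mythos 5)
The paper gives no proof of this proposition: it is quoted verbatim from \cite{GeKa} (Proposition~8.5). Your argument is correct and is essentially the standard proof found there: write $\Exp=\exp\circ L$ and $\Log=M\circ\log$ with $L=\sum_{r\ge1}\tfrac1r\psi^r$ and $M=\sum_{r\ge1}\tfrac{\mu(r)}{r}\psi^r$, use that the Adams operations $\psi^r=\sfp_r\circ(-)$ are continuous ring endomorphisms commuting with $\exp$ and $\log$ and satisfying $\psi^r\psi^s=\psi^{rs}$, and finish with M\"obius inversion $\sum_{r\mid n}\mu(r)=\delta_{n,1}$; the convergence bookkeeping via $\psi^r(\mathfrak m^k)\subseteq\mathfrak m^{rk}$ is exactly what is needed. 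One small notational caveat: what you call $\mathfrak m^k$ is the degree-$\ge k$ part of $\Lambda\llbracket t\rrbracket\otimes\Q$, which strictly contains the $k$-th power of the augmentation ideal (e.g.\ $\sfh_2$ lies in the former but not the latter), so ``$\mathfrak m$-adically complete'' should be read as completeness for the degree filtration; since all your estimates and limits are taken in that filtration, whose graded pieces are finite dimensional, the argument is unaffected.
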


\bigskip

\section{Representations on the cohomology of $\Mbar_{0,n}$}
\label{s:review}
In this section, we review necessary results from \cite{CKL}. Let $n\ge 3$ and consider the forgetful morphism
\[\pi:\Mbar_{0,n+1}\lra\Mbar_{0,n}\]
which forgets the last marking. The symmetric group $\symS_n$ acts on $\Mbar_{0,n}$ (resp.  $\Mbar_{0,n+1}$) by permuting the $n$ markings (resp. the first $n$ markings fixing the last), and the map $\pi$ is equivariant under these actions.

In \cite{CKL}, we established an $\symS_n$-equivariant factorization of $\pi$ using the theory of $\delta$-stable quasimaps which gives us a formula comparing the $\symS_n$-modules $H^*(\Mbar_{0,n})$ and $H^*(\Mbar_{0,n+1})$. Then we developed a combinatorial algorithm to compute the $\symS_n$-representations on the cohomology of $\Mbar_{0,n+1}$ by employing the Kapranov map \[ \Mbar_{0,n+1} \lra \PP^{n-2},\]which can be factored into a sequence of $\symS_n$-equivariant blowups. Combining these two results provides an algorithm for computing the $\symS_n$-equivariant cohomology of $\Mbar_{0,n}$.

\subsection{Moduli of $\d$-stable quasimaps}\label{ss:quasimap}
The moduli space of $\delta$-stable quasimaps was defined and studied in \cite{CK}. A \emph{quasimap} is a triple $(C, L, s)$ consisting of
\begin{itemize}
	\item nodal curves $C$ of genus $g$ with $n$ distinct marked points,
	\item line bundles $L$ on $C$ of degree $d$,
	\item nonzero multisections $s\in H^0(C,L^{\oplus m})$ of $L$.
\end{itemize}
The \emph{$\d$-stability} for a quasimap was introduced in \cite{CK} for any positive rational number $\d$, and then the moduli space $\fQ^\d$ of $\d$-stable quasimaps and its virtual invariant were constructed. For precise definitions and details, we refer to \cite{CK}.

In \cite{CK, CKL}, we studied the wall crossings of the moduli spaces of $\d$-stable quasimaps in the special case where $g=0$ and $m=d=1$, which give us an $\symS_n$-equivariant factorization of the forgetful morphism $\pi:\Mbar_{0,n+1}\to\Mbar_{0,n}$.

\begin{theorem}\cite[Theorem~2.8]{CKL}
\label{thm:qwc.geom}
	Let $n\geq 3$ and $\ell = \lfloor \frac{n-1}{2}\rfloor$. The forgetful morphism $\pi$ factorizes as
	\beq \label{eq:fact.pi}\Mbar_{0,n+1}\cong \fQ^{\d=\infty}\xrightarrow{~\rho_2~} \cdots \xrightarrow{~\rho_\ell~} \fQ^{\d=0^+}\xrightarrow{~p~} \Mbar_{0,n}\eeq
where
	\begin{enumerate}
		\item for $2\leq i\leq \ell$, $\rho_i$ is the blowup along the disjoint union of $\binom{n}{i}$ copies of $\Mbar_{0,i+1}\times \Mbar_{0,n-i+1}$,
		\item when $n=2\ell+1$, $p$ is a $\PP^1$-bundle morphism;\\
		when $n=2\ell+2$, $p$ is the blowup of a $\PP^1$-bundle over $\Mbar_{0,n}$ along the disjoint union of $\frac{1}{2}\binom{n}{\ell+1}$ copies of $\Mbar_{0,\ell+2}\times \Mbar_{0,\ell+2}$.
	\end{enumerate}
	All $\rho_i$ and $p$ are $\symS_n$-equivariant.
\end{theorem}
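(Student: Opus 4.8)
The result is the specialization to $g=0$, $m=d=1$ of the general theory of $\delta$-wall-crossings for quasimaps developed in \cite{CK}, and the plan has three parts: set up the chamber structure of $\{\fQ^{\delta}\}$ and locate its walls; identify the two extreme moduli spaces; and analyze each wall-crossing locally. For the first part I would invoke from \cite{CK} that, for fixed $n$, the stack $\fQ^{\delta}$ of $\delta$-stable quasimaps is a smooth proper Deligne--Mumford stack of dimension $n-2$ (smoothness being automatic here, since genus-$0$ degree-$1$ quasimaps are unobstructed), that it depends on $\delta\in\QQ_{>0}$ only through its chamber, and that the walls are detected by the numerical stability inequality --- ampleness of the log dualizing sheaf twisted by $L^{\delta}$ --- tested on rational subcurves. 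Evaluating this inequality on a rational component carrying $i$ of the markings $p_1,\dots,p_n$ together with the degree-$1$ section data and joined to the rest of the curve along one node, one finds finitely many critical values of $\delta$: one for each $i$ with $2\le i\le\ell=\lfloor\tfrac{n-1}{2}\rfloor$, together with the limiting value $\delta=0^+$. The bound $i\le\ell$ reflects that the complementary $(n-i)$-marking subcurve must also stay stable; the ``symmetric'' splitting $i=n/2$, which exists only for even $n$, is exactly the one that appears in the limit $\delta\to0^+$. This gives the chain \eqref{eq:fact.pi}.

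For the endpoints: when $\delta\gg0$, $\delta$-stability forces the section $s$ to vanish at a single smooth point $q$ disjoint from the nodes and markings, and the $(n+1)$-pointed curve $(C,p_1,\dots,p_n,q)$ to be Deligne--Mumford stable; conversely $L=\mathcal{O}(q)$ recovers a $\delta$-stable quasimap from any $(n+1)$-pointed stable curve, so $\fQ^{\delta=\infty}\cong\Mbar_{0,n+1}$. When $\delta=0^+$, the $L^{\delta}$ term leaves the positivity condition, so the underlying $n$-pointed curve is forced to be stable, and a short analysis of degree-$1$ line bundles with a section on a genus-$0$ curve (allowing the one extra rational bubble that $0^+$-stability permits) organizes the residual data into a $\PP^1$-bundle over $\Mbar_{0,n}$; when $n=2\ell+1$ this is $\fQ^{\delta=0^+}$ and $p$ is its projection. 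When $n=2\ell+2$ one still has to cross the symmetric wall before reaching $\Mbar_{0,n}$, and that crossing contributes the extra blowup along the $\tfrac12\binom{n}{\ell+1}$ copies of $\Mbar_{0,\ell+2}\times\Mbar_{0,\ell+2}$ --- the factor $\tfrac12$ since a subset $S$ of size $\ell+1$ and its complement give the same splitting --- so that $p$ is a blowup of the $\PP^1$-bundle.

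For the wall-crossings, for each $i$ with $2\le i\le\ell$ I would determine the local structure of $\rho_i$. On the side of larger $\delta$, the quasimaps whose curve acquires a node separating a fixed $i$-subset $S\subset\{1,\dots,n\}$ from its complement, with the section data constrained to the node, form a divisor; on the side of smaller $\delta$ this degenerates to the codimension-$2$ locus $\bigsqcup_{|S|=i}\Mbar_{0,\,S\sqcup\{\bullet\}}\times\Mbar_{0,\,S^{c}\sqcup\{\bullet\}}\cong\bigsqcup\Mbar_{0,i+1}\times\Mbar_{0,n-i+1}$, where $\bullet$ denotes the node; and the standard local model of a quasimap wall-crossing identifies $\rho_i$ with the blowup along this locus. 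Finally, the entire construction --- the stability condition, the walls, the two endpoints, and all the blowup centers --- is visibly invariant under the $\symS_n$-action permuting $p_1,\dots,p_n$ while fixing the section data, so every $\rho_i$ and $p$ is $\symS_n$-equivariant.

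The crux is the last step: one has to check that each wall-crossing is a genuine smooth blowup rather than a flip or a blowup along a singular or non-transverse center, and to match the gluing data so that the center is precisely $\Mbar_{0,i+1}\times\Mbar_{0,n-i+1}$ occurring with multiplicity $\binom{n}{i}$; this rests on a careful deformation-theoretic description of $\delta$-stable quasimaps near a wall. The even-$n$ case additionally requires verifying that the final symmetric modification is compatible with the $\PP^1$-bundle structure over $\Mbar_{0,n}$.
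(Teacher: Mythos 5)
The paper does not prove Theorem \ref{thm:qwc.geom}: it is quoted verbatim from \cite[Theorem~2.8]{CKL}, and the paragraphs following the statement give only an informal overview that defers all details to \cite{CKL} and, for the blowup structure at each wall, to \cite[Proposition~7.10]{CK}. Your outline matches that overview in every essential respect --- the identification $\fQ^{\delta=\infty}\cong\Mbar_{0,n+1}$ via the vanishing locus of $s$, the walls indexed by subsets of size $2\leq i\leq\ell$, the local blowup model at each wall, the $\PP^1$-bundle description of $\fQ^{\delta=0^+}$ (and its blowup along the central-node locus for even $n$), and the $\symS_n$-equivariance --- and you correctly flag the crux as the deformation-theoretic verification that each wall-crossing is a smooth blowup along a transverse center, which is precisely what the paper attributes to \cite{CK}. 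One small slip worth correcting: the blowup in $p$ for even $n$ is not itself a wall crossed at some $\delta>0$; the symmetric splitting $i=n/2=\ell+1$ lies just beyond the range $i\leq\ell$ of genuine walls, and that extra modification is intrinsic to the construction of $\fQ^{\delta=0^+}$ (needed because the limiting curve may have a central node rather than a central component) rather than a further $\delta$-wall-crossing, as the paper's review explains.
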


As $\d$ varies, the moduli space changes only at finitely many values of $\d$, which are called the \emph{walls}. Since the degree of the line bundle $L$ is one, the section $s$ is determined by the unique point on $C$ where it vanishes. When $\d$ is sufficiently large (denoted by $\d=\infty$), one can deduce that $\fQ^{\d=\infty}\cong \Mbar_{0,n+1}$ (cf. \cite[Lemma~2.7]{CKL}), where the vanishing point of the section $s$ is treated as an additional marking.

As $\d$ decreases, there are $\ell -1$ walls where the moduli space varies. If a quasimap becomes unstable as we cross a wall, there is a \emph{modification} procedure to obtain a stable quasimap. (cf. \cite[\S 7.2]{CK}) When $d=1$, such procedure induces a blowup morphism at each wall along the locus of unstable quasimaps \cite[Proposition 7.10]{CK}.

When $\d$ is sufficiently close to zero (denoted by $\d=0^+$) and $n$ is odd, a quasimap $(C,L,s)$ is $\d$-stable if $C\in \Mbar_{0,n}$ and $L$ has degree one on the unique central component of $C$. Here a central component means that its complement has no connected subcurve with more than $\frac{n}{2}$ markings. When $n$ is even, the curve $C$ may have a central node instead of a central component and we need to blow up along this locus to get $\fQ^{\delta=0^+}$.

Let
\[P_n=\sum_{k=0}^{n-3}\ch_{\symS_n} \left(H^{2k}\left(\Mbar_{0,n}\right)\right)t^k \and Q_n=\sum_{k=0}^{n-2}\ch_{\symS_n} \left(H^{2k}\left(\Mbar_{0,n+1}\right)\right)t^k\]
be the Frobenius characteristics of the graded $\symS_n$-representations and let
\[P=1+\sfh_1+\sfh_2+\sum_{n\geq 3}P_n \and Q=1+\sfh_1+\sum_{n\geq2}Q_n.\]

By Theorem~\ref{thm:qwc.geom}, 
we obtain (cf. \cite[Proposition 4.1]{CKL})
\beq\label{eq:qwc1}Q_n=P_{\fQ^{\d=0^+}} + t\sum_{2\leq i<\frac{n}{2}}Q_iQ_{n-i},  \eeq
where \[P_{\fQ^{\d=0^+}}(t):=\sum_{k=0}^{n-2}\ch_{\symS_n}\left(H^{2k}\left(\fQ^{\d=0^+}\right)\right)t^k\]
is defined similarly using the $\symS_n$-action on $\fQ^{\d=0^+}$.

Moreover, it is immediate that
$P_{\fQ^{\d=0^+}}=(1+t)P_n$ when $n$ is odd, by the projective bundle formula.
When $n$ is even, we can use an $\symS_n$-equivariant description of $\fQ^{\d=0^+}$ as a GIT quotient to prove (cf. \cite[Corollary~4.7]{CKL})
\beq\label{eq:qwc2}P_{\fQ^{\d=0^+}}=(1+t)P_n+t\sfs_{(1,1)}\circ Q_{\frac{n}{2}}\eeq
for any $n$, where we set $Q_{\frac{n}{2}}=0$ for $n$ odd. 
For details, we refer to \cite[\S4.3--4.4]{CKL}.

Combining \eqref{eq:qwc1} and \eqref{eq:qwc2}, we get the formula which relates $P_n$ and $Q_n$.

\begin{theorem}\cite[Theorem~4.8]{CKL}\label{thm:CKL} For $n\geq 3$,  we have 
	\beq \label{eq:CKL}(1+t)P_n=Q_n -t\left(\sum_{2\leq i<\frac{n}{2}}Q_iQ_{n-i}+\sfs_{(1,1)}\circ Q_{\frac{n}{2}} \right)\eeq
	where we set $Q_{\frac{n}{2}}=0$ for $n$ odd.
\end{theorem}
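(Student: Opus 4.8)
The plan is to obtain \eqref{eq:CKL} by combining the two identities \eqref{eq:qwc1} and \eqref{eq:qwc2} that were already recorded above. Substituting \eqref{eq:qwc2} for $P_{\fQ^{\d=0^+}}$ into \eqref{eq:qwc1} gives
\[Q_n=(1+t)P_n+t\,\sfs_{(1,1)}\circ Q_{\frac{n}{2}}+t\sum_{2\leq i<\frac{n}{2}}Q_iQ_{n-i},\]
and transposing the last two terms to the other side yields \eqref{eq:CKL} verbatim. The only care needed is bookkeeping. With the convention $Q_{n/2}=0$ for $n$ odd, \eqref{eq:qwc2} is uniformly valid for all $n\geq 3$: when $n$ is odd the space $\fQ^{\d=0^+}$ is an honest $\PP^1$-bundle over $\Mbar_{0,n}$, so $P_{\fQ^{\d=0^+}}=(1+t)P_n$ by the projective bundle formula. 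Moreover the integer range $2\leq i<n/2$ is the same in \eqref{eq:qwc1} and \eqref{eq:qwc2}, so no term is dropped or double-counted when $n$ is even.

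For completeness I would also indicate where \eqref{eq:qwc1} and \eqref{eq:qwc2} themselves come from, since that is where the substance lies. Identity \eqref{eq:qwc1} is read off from the $\symS_n$-equivariant factorization of Theorem~\ref{thm:qwc.geom}: each $\rho_i$ is a smooth blowup whose center is a disjoint union of $\binom{n}{i}$ copies of $\Mbar_{0,i+1}\times\Mbar_{0,n-i+1}$ of codimension $2$, so by the blowup formula it enlarges the graded $\symS_n$-character by $t\cdot\ch_{\symS_n}H^*(\text{center})$; since $\symS_n$ permutes the copies with stabilizer $\symS_i\times\symS_{n-i}$ and the Frobenius characteristic turns induction from a Young subgroup into a product, this contribution equals $t\,Q_iQ_{n-i}$. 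Summing over $2\leq i\leq\lfloor\frac{n-1}{2}\rfloor$ and collecting the effect of the final map $p$ into $P_{\fQ^{\d=0^+}}$ produces \eqref{eq:qwc1}. Identity \eqref{eq:qwc2} is the analysis of $p$: a projective bundle formula for $n$ odd, and for $n$ even the study of the blowup $p$ of a $\PP^1$-bundle over $\Mbar_{0,n}$ along $\frac{1}{2}\binom{n}{n/2}$ copies of $\Mbar_{0,n/2+1}\times\Mbar_{0,n/2+1}$, whose $\symS_n$-orbit has stabilizer $(\symS_{n/2}\times\symS_{n/2})\rtimes\symS_2$.

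The one genuinely delicate point---the step I expect to be the main obstacle for a self-contained proof---is the appearance of the \emph{sign} representation, that is of $\sfs_{(1,1)}\circ Q_{n/2}=Q_{n/2}^2-\sfh_2\circ Q_{n/2}$ rather than $\sfh_2\circ Q_{n/2}$, in \eqref{eq:qwc2}. The factor-swapping $\symS_2$ in the stabilizer above acts with a twist on the normal data of the central locus, so the exceptional contribution of $p$ is the \emph{antisymmetric} plethystic square of $Q_{n/2}$; pinning this down uses the $\symS_n$-equivariant GIT presentation of $\fQ^{\d=0^+}$ from \cite[\S4.3--4.4]{CKL}. Granting it, the theorem follows by the one-line substitution above.
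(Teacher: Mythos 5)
Your proposal is correct and coincides with the paper's own derivation: the paper states Theorem~\ref{thm:CKL} precisely as the combination of \eqref{eq:qwc1} and \eqref{eq:qwc2}, which is the one-line substitution you perform, with the substantive inputs (the blowup/projective-bundle analysis of the factorization and the GIT argument producing $\sfs_{(1,1)}\circ Q_{n/2}$) deferred to \cite{CKL} exactly as you indicate.
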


From this, we deduce a formula relating $P$ and $Q$.
\begin{corollary}\label{cor:CKL.gen}  $P$ and $Q$ satisfy
\beq \label{eq:qwc.gen}
(1+t)P=(1+t+\sfh_1 t)Q-t\sfs_{(1,1)}\circ Q.
\eeq
\end{corollary}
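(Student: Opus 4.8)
## Proof Proposal

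The plan is to obtain \eqref{eq:qwc.gen} from Theorem~\ref{thm:CKL} by summing the identity \eqref{eq:CKL} over all $n \geq 3$, after multiplying by the appropriate powers of the formal variable tracking $n$, and then recognizing the resulting sums in terms of the generating series $P$ and $Q$ together with the plethysm operations introduced in \S\ref{ss:plethysm}. Since the statement as written does not introduce an $n$-tracking variable in $P$ and $Q$ (the grading on $R$ records $n$ intrinsically via the degree of symmetric functions), the bookkeeping is already built into the ring $R\llbracket t\rrbracket$: multiplication in $R$ sends $R_i \times R_j \to R_{i+j}$, so $Q_i Q_{n-i}$ lands in degree $n$ automatically. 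Thus summing \eqref{eq:CKL} over $n$ will directly produce identities among the graded elements $P, Q \in R\llbracket t\rrbracket$, with no extra variable needed.

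First I would record the low-degree terms that are not covered by Theorem~\ref{thm:CKL}, which only applies for $n \geq 3$: the pieces $1 + \sfh_1 + \sfh_2$ in $P$ and $1 + \sfh_1$ in $Q$. I would check by hand that the claimed identity \eqref{eq:qwc.gen} holds in degrees $n = 0, 1, 2$, using $H^0(\Mbar_{0,n}) = \mathbbm{1}$ trivially and the fact that $\Mbar_{0,3}$ is a point so $H^*(\Mbar_{0,3})$ contributes $\sfh_3$ in degree $3$, etc. Concretely, in degree $\leq 2$ one has $P = 1 + \sfh_1 + \sfh_2 + \cdots$ and $Q = 1 + \sfh_1 + Q_2 + \cdots$, and one needs the constant, linear, and quadratic parts of both sides of \eqref{eq:qwc.gen} to match; the plethysm $\sfs_{(1,1)}\circ Q$ has no part in degree $< 2$ and its degree-$2$ part is $\sfs_{(1,1)}\circ \sfh_1 = \sfs_{(1,1)}$, which must be reconciled with $Q_2 = \ch_{\symS_2}(H^*(\Mbar_{0,3}))$ and the term $\sfh_1 t \cdot \sfh_1$. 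I expect this to be a short finite computation.

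Next, for the main part, I would sum \eqref{eq:CKL} over $n \geq 3$. The left side gives $(1+t)\sum_{n\geq 3} P_n = (1+t)(P - 1 - \sfh_1 - \sfh_2)$. On the right, $\sum_{n\geq 3} Q_n = Q - 1 - \sfh_1 - Q_2$ (one must be careful: the sum in $Q$ starts at $n = 2$). The key step is to identify the bilinear term: $\sum_{n \geq 3} \sum_{2 \leq i < n/2} Q_i Q_{n-i}$ is exactly the sum of all products $Q_i Q_j$ with $i, j \geq 2$ and $i \neq j$, each unordered pair counted once, i.e.\ $\sum_{2 \leq i < j} Q_i Q_j$, while $\sum_{n \geq 3}\sfs_{(1,1)}\circ Q_{n/2}$ (over even $n$) is $\sum_{i \geq 2}\sfs_{(1,1)}\circ Q_i$. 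By Example~\ref{ex:antisym}, equation \eqref{eq:antisymm}, applied to $F = \sum_{i \geq 2} Q_i = Q - 1 - \sfh_1$, these two sums combine to give precisely $\sfs_{(1,1)}\circ(Q - 1 - \sfh_1)$. So the summed right side is
\[
(Q - 1 - \sfh_1 - Q_2) - t\,\sfs_{(1,1)}\circ(Q - 1 - \sfh_1).
\]
Then I would use $\sfs_{(1,1)} = \sfh_1^2 - \sfh_2$ and the plethysm properties (1)--(3) to expand $\sfs_{(1,1)}\circ(Q - 1 - \sfh_1)$ in terms of $\sfs_{(1,1)}\circ Q$ and lower-order corrections — here one uses \eqref{eq:plethysm.exp.two} repeatedly, noting $\sfs_{(1,1)}\circ 1 = 0$ and $\sfs_{(1,1)}\circ \sfh_1 = \sfs_{(1,1)}$ and cross terms of the form $(\sfh_1 \circ F)(\sfh_1 \circ G)$. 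Collecting everything and moving the low-degree corrections around, I would verify the telescoping cancellation of the stray constants $1, \sfh_1, \sfh_2, Q_2$ against the terms produced by the low-degree check, landing on the clean identity $(1+t)P = (1 + t + \sfh_1 t)Q - t\,\sfs_{(1,1)}\circ Q$.

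The main obstacle I anticipate is the careful handling of the boundary/low-degree terms: the sum defining $Q$ starts at $n = 2$ while Theorem~\ref{thm:CKL} starts at $n = 3$, and $\sfs_{(1,1)}\circ Q$ mixes all degrees, so the discrepancy between $\sfs_{(1,1)}\circ Q$ and $\sfs_{(1,1)}\circ(Q - 1 - \sfh_1)$ must be tracked exactly, and the term $\sfh_1 t \cdot Q$ appearing in the target identity must be seen to arise from reconciling $Q_2$ and the $n=2$ contributions. The plethysm bookkeeping via \eqref{eq:plethysm.exp.two} is routine but must be done with care to make sure all the "$\sfh_1 t$-type" terms assemble correctly; once the identity is checked in low degrees and the generating-series manipulation is done cleanly, the two will match.
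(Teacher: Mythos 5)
Your proposal is correct and follows essentially the same route as the paper: the paper likewise sums \eqref{eq:CKL} over $n$, uses Example~\ref{ex:antisym} (equation \eqref{eq:antisymm}) to recombine $\sum_{2\le i<j}Q_iQ_j+\sum_{i\ge 2}\sfs_{(1,1)}\circ Q_i$ into $\sfs_{(1,1)}\circ(Q-1-\sfh_1)$, and then substitutes $\bar P=P-(1+\sfh_1+\sfh_2)$, $\bar Q=Q-(1+\sfh_1)$ (using $Q_2=\sfh_2$ and $Q_3=(1+t)P_3$) to absorb the low-degree corrections into the term $\sfh_1 t\,Q$. The bookkeeping you flag as the main obstacle is exactly the content of the paper's final substitution step, and it does close up as you expect.
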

\begin{proof}
We first show that, if we let
	\[\bar P:=\sum_{n\geq 3}P_n \and
	\bar Q:=\sum_{n\geq 2}Q_n,\] then we have
	\beq \label{eq:qwc.gen2} (1+t)\bar P= (\bar Q-\sfh_2)-t\sfs_{(1,1)}\circ \bar Q.\eeq
Indeed, by \eqref{eq:antisymm} and Theorem \ref{thm:CKL}, we have
	\[\begin{split}
		t\sfs_{(1,1)}\circ \bar Q&=t\left(\sum_{2\leq n_1<n_2}Q_{n_1}Q_{n_2}+\sum_{n\geq2}\sfs_{(1,1)}\circ Q_n\right)\\
		&=\sum_{n\geq4}t \left(\sum_{2\leq i<\frac{n}{2}}Q_iQ_{n-i}+\sfs_{(1,1)}\circ Q_{\frac{n}{2}}\right)\\
		&=\sum_{n\geq4}\Big(Q_n-(1+t)P_n\Big)= (\bar Q -\sfh_2) -(1+t)\bar P,
	\end{split}\]
	where the last equality holds because $Q_3=(1+t)\sfh_3=(1+t)P_3$ and $Q_2=\sfh_2$.
Now by substituting $\bar P=P-(1+\sfh_1 +\sfh_2 )$ and $\bar Q=Q-(1+\sfh_1 )$ in \eqref{eq:qwc.gen2}, \eqref{eq:qwc.gen} follows.
\end{proof}

As a result, the computation of $P$ is reduced to that of $Q$.

\subsection{$\symS_n$-representations on the cohomology of $\Mbar_{0,n+1}$}
As first observed by Kapranov \cite{Kap}, $\Mbar_{0,n+1}$ admits a birational morphism
onto $\PP^{n-2}$, which factorizes into a sequence of blowups
\beq\label{eq:Kap.map}\Mbar_{0,n+1}\xrightarrow{f_{n-2}}\cdots \xrightarrow{~f_1~}\PP^{n-2},\eeq
where the intermediate spaces are Hassett's moduli spaces of  weighted stable curves, with weights of the form $(({1}/{k})^n,1)$ for $2\le k\le n-1$.

Geometrically, these blowup morphisms are described as follows. For given $n$ points in $\PP^{n-2}$ in general position, $f_i$ is the blowup along the transversal union of the proper transforms of $\binom{n}{i}$ distinct $(i-1)$-planes $\PP^{i-1}$ passing through $i$ points among the $n$ points for each $i$. For example, $f_1$ is the blowup along the $n$ points, $f_2$ is the blowup along the proper transforms of the $\binom{n}{2}$ lines passing through pairs of the $n$ points, etc.

For such $n$ points, the $\symS_n$-action permuting the $n$ points naturally extends to a unique projective linear automorphism of $\PP^{n-2}$. With respect to this action, \eqref{eq:Kap.map} is $\symS_n$-equivariant.

As proved by Hassett in \cite{Has}, \eqref{eq:Kap.map} enjoys a further nice property: irreducible components of the blowup centers of $f_i$, and their nonempty (transversal) intersections also admit birational morphism to the projective spaces, together with factorization into blowups along the transversal unions of smooth subvarieties. Moreover, these subvarieties again admit birational morphisms to the projective spaces together with factorizations by such nice blowups.

Consequently, by applying the blowup formula to all these blowups, one can express the cohomology of $\Mbar_{0,n+1}$ in terms of the cohomology of various projective spaces that appear as the targets in the above description. This process is systematically carried out in \cite[\S5]{CKL} using the blowup formula for the transversal union of smooth subvarieties \cite[Proposition~6.1]{BM}. The combinatorics of the blowups is governed by rooted trees, which we review below, leading to the result that
\beq \label{eq:cohom.M0n+1.2}H^*(\Mbar_{0,n+1})\cong \bigoplus_{T\in \sT^o_n}H^*\left(\PP^{val(v_0)-3}\right)\otimes \bigotimes_{v\in V(T)\setminus\{v_0\}}H^+\left(\PP^{val(v)-3}\right)\eeq
where $\sT_n^o$ denotes the set of rooted trees with $n$ legs, $v_0$ denotes the root of $T\in \sT_n^o$, and $H^+(\PP^m)\subset H^*(\PP^m)$ denotes the subspace of positive cohomology degrees (cf. \cite[(5.9)]{CKL}). By taking the degree $2k$ part, we obtain the following.
\begin{proposition}\cite[Proposition~5.12]{CKL}\label{prop:Q.wrt} The $\symS_n$-representation on $H^{2k}(\Mbar_{0,n+1})$ is the permutation representation of $\symS_n$ spanned by weighted rooted trees with $n$ inputs and weight $k$.
\end{proposition}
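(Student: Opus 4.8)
The plan is to make precise the informal combinatorial description from \eqref{eq:cohom.M0n+1.2} and the discussion following it, and then read off the $\symS_n$-module structure of each graded piece. First I would recall from \cite[\S5]{CKL} the explicit isomorphism \eqref{eq:cohom.M0n+1.2}: iterating the blowup formula for transversal unions of smooth subvarieties (\cite[Proposition~6.1]{BM}) along the Kapranov factorization \eqref{eq:Kap.map} and the analogous factorizations of the blowup centers, one expresses $H^*(\Mbar_{0,n+1})$ as a direct sum, indexed by rooted trees $T$ with $n$ legs, of tensor products of (shifted) cohomologies of projective spaces $\PP^{\mathrm{val}(v)-3}$, one factor for each vertex $v$ of $T$, with the root contributing the full cohomology and every other vertex contributing only the positive-degree part. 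A \emph{weighted} rooted tree with $n$ inputs and weight $k$ is then precisely the datum of a rooted tree $T\in\sT_n^o$ together with a choice of cohomological degree at each vertex — i.e.\ an element of $H^{2a_{v_0}}(\PP^{\mathrm{val}(v_0)-3})\otimes\bigotimes_{v\neq v_0}H^{2a_v}(\PP^{\mathrm{val}(v)-3})$ with $a_{v_0}\ge 0$, $a_v\ge 1$ for $v\neq v_0$, and $\sum_v a_v=k$ — so taking the degree-$2k$ part of \eqref{eq:cohom.M0n+1.2} gives a vector space with a basis naturally indexed by $\sT_{n,k}$, the set of weighted rooted trees with $n$ inputs and weight $k$. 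This is the content I would cite as essentially \cite[\S5]{CKL}.

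Next I would track the $\symS_n$-action through this isomorphism. The action of $\symS_n$ on $\PP^{n-2}$ permuting the $n$ base points is linear, hence acts trivially on $H^*(\PP^{n-2})$; all the subsequent blowup centers and their components are permuted among themselves by $\symS_n$, and the blowup formula is equivariant, so the induced action on the right-hand side of \eqref{eq:cohom.M0n+1.2} permutes the summands according to the $\symS_n$-action on $\sT_n^o$ (relabeling the legs), and within the stabilizer of a given tree it acts on the tensor factors $H^*(\PP^{\mathrm{val}(v)-3})$ — again linearly, hence trivially on cohomology up to the permutation of factors coming from automorphisms of $T$ that permute isomorphic subtrees. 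Consequently the natural basis of $H^{2k}(\Mbar_{0,n+1})$ indexed by $\sT_{n,k}$ is permuted as a set by $\symS_n$: an element $\sigma\in\symS_n$ sends the basis vector attached to a weighted rooted tree $T$ to the one attached to $\sigma\cdot T$. This exhibits $H^{2k}(\Mbar_{0,n+1})$ as a permutation representation of $\symS_n$ with invariant basis $\sT_{n,k}$, which is the assertion.

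The main obstacle is the bookkeeping in the previous paragraph: making sure that the $\symS_n$-action really does carry \emph{basis vectors to basis vectors} rather than to signed or permuted-within-a-tensor-factor combinations. The subtlety is that an automorphism of a rooted tree $T$ fixed by some $\sig\in\symS_n$ may permute isomorphic branches, and one must check that the corresponding permutation of tensor factors $H^{2a_v}(\PP^{\mathrm{val}(v)-3})$ sends the chosen monomial basis element of the tensor product to another such basis element (up to the sign coming from the Koszul rule) — but since all these cohomology groups are concentrated in even degrees, the Koszul signs are trivial, and the permutation of factors simply reorders the tensor monomial, which is again a basis monomial. This even-degree concentration is exactly what makes the permutation-representation statement clean, and I would emphasize it. With that point settled, the proposition follows by combining the equivariant version of \eqref{eq:cohom.M0n+1.2} from \cite[\S5]{CKL} with the identification of its degree-$2k$ basis with $\sT_{n,k}$.
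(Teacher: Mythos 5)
Your proposal follows essentially the same route as the paper, which simply cites \cite[Proposition~5.12]{CKL} and reviews the derivation via the Kapranov factorization \eqref{eq:Kap.map}, the iterated blowup formula, and the decomposition \eqref{eq:cohom.M0n+1.2}; your extra care about how $\symS_n$ acts trivially on $H^*(\PP^{n-2})$ and about Koszul signs (trivial by even-degree concentration) is correct and is exactly the bookkeeping needed. The one imprecision is notational: the invariant basis consists of \emph{labeled} weighted rooted trees, i.e.\ the set $\sT_{n,k}^{\mathrm{lab}}$, whose $\symS_n$-orbits are indexed by the unlabeled set $\sT_{n,k}$, so that each $(T,w)\in\sT_{n,k}$ contributes the transitive permutation representation $U_{(T,w)}$ spanned by $\Lab^{-1}(T,w)$ rather than a single basis vector.
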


We now provide a review of weighted rooted trees. 
A \emph{tree} is a connected graph with no loops or cycles. We allow some edges of a tree to be attached to only one end vertex. We refer to such edges as \emph{legs}, while reserving the term ``edges'' for those with two end vertices. 
For a vertex $v$ of a tree, the \emph{valency} of $v$, denoted by $val(v)$, is the number of edges and legs attached to $v$.

\begin{definition}\label{def:rooted.tree}
	A \emph{labeled rooted tree} or \emph{$n$-labeled rooted tree} is a tree with one distinguished leg called the \emph{output}, and with the remaining legs, called the \emph{inputs}, labeled by integers $1,\dots,n$, where $n$ is the number of inputs.
	The vertex to which the output is attached is called the \emph{root}.
	A \emph{rooted tree} is a tree obtained from a labeled rooted tree by forgetting the labels (and order) on the inputs.
\end{definition}

Our notion of a rooted tree (resp. $n$-labeled rooted tree) is the same as that of a tree (resp. $n$-tree) in \cite[\S1.1.1]{GiKa}.
\begin{figure}[h]
    \centering
    \begin{tikzpicture} [scale=0.8,auto=left,every node/.style={scale=0.8}]
      \tikzset{Bullet/.style={circle,draw,fill=black,scale=0.5}}
      \node[Bullet] (n0) at (2,0.5) {};
      \node[Bullet] (n1) at (2,-0.5) {};
      \node[] (n2) at (0.5,-1.5) {};
      \node[] (n3) at (1.5,-1.5) {};
      \node[] (n4) at (2.5,-1.5) {};
      \node[] (n5) at (3.5,-1.5) {};

      \draw[black] (n0) -- (2,1.5);
      \draw[black] (n0) -- (n1);
      \draw[black] (n1) -- (n2);
      \draw[black] (n1) -- (n3);
      \draw[black] (n1) -- (n4);
      \draw[black] (n1) -- (n5);
      \draw[black] (n0) -- (0.5,-0.5);
      \draw[black] (n0) -- (3.5,-0.5);
      \draw[] (2,1.5) node[right] {the output};
      \draw (2.1,0.5) node[right] {$v_0$};
      \draw (2.1,-0.5) node[right] {$v$};
      \draw[] (0.4,-0.5) node {$1$};
      \draw[] (3.6,-0.5) node {$2$};
      \draw[] (0.4,-1.6) node {$3$};
      \draw[] (1.4,-1.6) node {$4$};
      \draw[] (2.6,-1.6) node {$5$};
      \draw[] (3.6,-1.6) node {$6$};
    \end{tikzpicture}
    \qquad
    \begin{tikzpicture} [scale=0.8,auto=left,every node/.style={scale=0.8}]
      \tikzset{Bullet/.style={circle,draw,fill=black,scale=0.5}}
      \node[Bullet] (n0) at (2,0.5) {};
      \node[Bullet] (n1) at (2,-0.5) {};
      \node[] (n2) at (0.5,-1.5) {};
      \node[] (n3) at (1.5,-1.5) {};
      \node[] (n4) at (2.5,-1.5) {};
      \node[] (n5) at (3.5,-1.5) {};

      \draw[black] (n0) -- (2,1.5);
      \draw[black] (n0) -- (n1);
      \draw[black] (n1) -- (n2);
      \draw[black] (n1) -- (n3);
      \draw[black] (n1) -- (n4);
      \draw[black] (n1) -- (n5);
      \draw[black] (n0) -- (0.5,-0.5);
      \draw[black] (n0) -- (3.5,-0.5);
      \draw[] (2,1.5) node[right] {the output};
      \draw (2.1,0.5) node[right] {$v_0$};
      \draw (2.1,-0.5) node[right] {$v$};
      \draw[] (0.4,-0.5);
      \draw[] (3.6,-0.5);
      \draw[] (0.4,-1.6);
      \draw[] (1.4,-1.6);
      \draw[] (2.6,-1.6);
      \draw[] (3.6,-1.6);
    \end{tikzpicture}
    \caption{A labeled rooted tree and a rooted tree}
    \label{fig:RootedTree}
\end{figure}
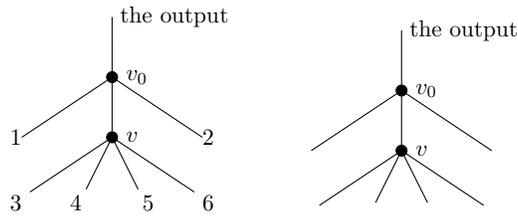
The labeled rooted tree and the rooted tree in Figure \ref{fig:RootedTree} are examples with six inputs. Each has two vertices $v_0$ and $v$ of valencies four and five respectively, where $v_0$ is the root.

\begin{definition}\label{def:wrt} Let $T$ be a labeled rooted tree or a rooted tree in Definition~\ref{def:rooted.tree}.
	A \emph{weight function} on $T$ is a $\Z_{\geq 0}$-valued function
	\[w:V(T)\lra \Z_{\geq 0}\]
	such that for each $v\in V(T)$,
	\begin{enumerate}
		\item $0\leq w(v)\leq val(v)-3$ if $v$ is the root;
		\item $0< w(v) \leq val(v)-3$ if $v$ is not the root.
	\end{enumerate}
	
	We call such a pair $(T,w)$ a (resp. \emph{labeled}) \emph{weighted rooted tree} if $T$ is a (resp. labeled) rooted tree. The \emph{weight} of $(T,w)$ refers to $\sum_{v\in V(T)}w(v)$.
\end{definition}
Note that only the root can have zero weight, whereas weights on all the other vertices are always positive. The conditions~(1) and (2) on the weight function are chosen based on the formula \eqref{eq:cohom.M0n+1.2}.

\begin{definition}\label{def:set.wrt}
	For $n,k\geq0$, we denote by $\sT_{n,k}$ (resp.~$\sT_{n,k}^{\mathrm{lab}}$) the set of (resp. labeled) weighted rooted trees with $n$ inputs and weight $k$.
\end{definition}

There is a natural action of $\symS_n$ on $\sT_{n,k}^{\mathrm{lab}}$ permuting the labels on the inputs. The quotient map by this action
\[\Lab:\sT_{n,k}^{\mathrm{lab}}\lra \sT_{n,k}^{\mathrm{lab}}/\symS_n= \sT_{n,k}\]
is the forgetful morphism which forgets the labels on the inputs.

\begin{remark}
	We slightly changed the notations from \cite{CKL} for simplification. For example, $\sT_{n,k}^{\mathrm{lab}}$ defined above is the set denoted by $\sT_{n,k}$ in \cite{CKL}.
	\end{remark}

\begin{example} \label{ex:empty.cases}
	If $n\leq 1$, then $\sT_{n,k}=\emptyset$ for any $k$. If $n\geq 2$, then $\sT_{n,0}$ consists of the unique weighted rooted tree which has only one vertex, the root. 
\end{example}

Note that every fiber of $\Lab$ is the set of all possible ways of labeling the inputs of a given rooted tree by $1,\cdots,n$, and hence it is
 an $\symS_n$-set.
\begin{definition}\label{def:U_T}
	For $(T,w)\in \sT_{n,k}$, 
	we define  $U_{(T,w)}$ to be the permutation representation of $\symS_n$ spanned by 
	$\Lab^{-1}{(T,w)}$.
	An equivalent definition
	is that
	\[U_{(T,w)}=U_{\Stab(T,w)}=\Ind^{\symS_n}_{\Stab(T,w)}\mathbbm{1}\]
	as in Definition~\ref{def:perm.rep}, where $\Stab(T,w)$ denotes the stabilizer subgroup in $\symS_n$ of an element in $\Lab^{-1}{(T,w)}$ so that $\Lab^{-1}(T,w)\cong \symS_n/\Stab(T,w)$.
	
\end{definition}

For convenience, we will sometimes omit $w$ in our notations, and write $T$, $\Stab(T)$ and $U_T$ for $(T,w)$, $\Stab(T,w)$ and $U_{(T,w)}$ respectively.

\begin{example}\label{ex:UT1}
	Suppose that  $T\in \sT_{n,k}$ has only one vertex, the root. Then, $\Lab^{-1}(T)$ consists of a single element, and $\Stab(T)=\symS_n$. Hence, $U_T\cong M^{(n)}$ and
	$\ch_{\symS_n}(U_T)=\sfh_n$.
\end{example}
\begin{example}
	Suppose that  $T\in \sT_{n,k}$ has two vertices with $a$ inputs attached to the root. Then $\Lab^{-1}(T)$ consists of $\binom{n}{a}=\frac{n!}{a!(n-a)!}$ elements, and $\Stab(T)\cong \symS_a\times \symS_{n-a}$. Hence, $U_T\cong M^{(a,n-a)}$ and
	$\ch_{\symS_n}(U_T)=\sfh_{(a,n-a)}$.
\end{example}

\begin{example}
	Suppose that $(T,w)\in \sT_{n,k}$ has three vertices, and that the two non-root vertices are adjacent to the root and have the same weights and the same number $a\leq \frac{n}{2}$  of inputs attached to them. Then, $\Lab^{-1}{(T,w)}$ consists of $\frac{1}{2}\binom{n}{a}\binom{n-a}{a}=\frac{1}{2}\frac{n!}{(a!)(a!)(n-2a)!}$ elements, and $\Stab(T,w)\cong \symS_{n-2a}\times ((\symS_a\times \symS_a)\rtimes \symS_2)$. Hence, $\ch_{\symS_n}(U_T)=\sfh_{n-2a}\cdot (\sfh_2\circ \sfh_a)$.
\end{example}

By Proposition \ref{prop:Q.wrt}, we have an isomorphism
	\[H^{2k}(\Mbar_{0,n+1})\cong \bigoplus_{(T,w)\in \sT_{n,k}}U_{(T,w)}\]
	 of $\symS_n$-representations. Hence,
\beq\label{eq:Qnk}Q_{n,k}=\sum_{(T,w)\in \sT_{n,k}}\ch_{\symS_n}(U_{(T,w)}).\eeq

Our primary goal now is to compute the generating function $Q$ using the combinatorial formula described above. This will lead us to the computation of $P$ by the wall crossing formula \eqref{eq:qwc.gen}. In the next section, we develop a recursive algorithm for $Q$ based on the recursive structure of the weighted rooted trees.

\bigskip

\section{Recursive algorithm} 
\label{s:recursion}
In this section, we capture an inherent recursive structure on $Q$, based on the combinatorial formula \eqref{eq:Qnk}. 

\subsection{Recursive structure}\label{ss:key.lemmas} We present key lemmas for our recursion.

\begin{definition}\label{def:Tplus}
	Let $\sT_{n,k}^+\subset \sT_{n,k}$ denote the subset consisting of weighted rooted trees with positive weight at the root.
	In other words, $\sT_{n,k}^+$ is the set of pairs $(T,w)\in \sT_{n,k}$ satisfying
	\beq\label{eq:wt.cond.pos}
		0< w(v)\leq val(v)-3
	\eeq
	for every vertex $v$ of $T$.
\end{definition}

\begin{example}\label{ex:empty.cases+}
	$\sT^+_{n,k}=\emptyset$ if either $n\leq 2$ or $k=0$. 
	If $n\geq 3$, then $\sT^+_{n,1}$ consists of the unique weighted rooted tree with only one vertex, the root.
\end{example}

\begin{lemma} \label{lem:key1}
Let
\[\sT:=\bigsqcup_{n,k\geq0}\sT_{n,k} \and \sT^+:=\bigsqcup_{n,k\geq0}\sT^+_{n,k}.\] Then there is a natural bijection
\[\Phi:\prod_{\substack{a,r\geq0\\0\le b\le a+r-2}}(\sT^+)^{\times r}/\symS_r\xrightarrow{~\cong~}\sT\]
\[\hspace{2em}
\begin{tikzpicture} [scale=.7,auto=left,every node/.style={scale=0.8}]
      \tikzset{Bullet/.style={circle,draw,fill=black,scale=0.5}}
      \node[Bullet] (T1) at (-7,-1) {};
      \node[Bullet] (T2) at (-4,-1) {};
      \node[Bullet] (n0) at (1,-0.5) {};
      \node[Bullet] (n1) at (0,-1.5) {};
      \node[Bullet] (n2) at (2,-1.5) {};

      \draw[black] (-5.5,-1) node[] {$\cdots$};
      \draw[black] (T1) -- (-7,0);
      \draw[black] (T2) -- (-4,0);
      \draw[black] (T1) node[below] {$(T_1,w_1)$};
      \draw[black] (T2) node[below] {$(T_r,w_r)$};
      \draw[black] (-2.5,-1) node[right] {$\longmapsto$};
      \draw[black] (n0) -- (1,0.5);
      \draw[black] (n0) -- (n1);
      \draw[black] (n0) -- (n2);
      \draw[black] (1,-1.5) node[] {$\cdots$};
      \draw[black] (n1) node[below] {$(T_1,w_1)\quad$};
      \draw[black] (n0) -- (1.7,-0.5) node[near end, right] {$a$ inputs};
      \draw[black] (n0) node[left] {$(b)~$};
      \draw[black] (n2) node[below] {$\quad(T_r,w_r)$};
    \end{tikzpicture}
    \]
	 which sends the $\symS_r$-equivalence class of the $r$-tuple $((T_1,w_1),\cdots ,(T_r,w_r))\in \sT^+_{n_1,k_1}\times \cdots\times \sT^+_{n_r,k_r}$ of weighted rooted trees, together with a pair $(a,b)$ of integers, to a new weighted rooted tree $(T,w)$
	 where
	 \begin{enumerate}
	 	\item $T$ is obtained by attaching the outputs of $T_i$ to the root $v$ of the rooted tree with only one vertex $v$ and $a$ inputs attached to $v$;
	 	\item $w$ is defined by $w(v):=b$ and $w(v'):=w_i(v')$ if $v'\in V(T_i)$.
	 \end{enumerate}
	 In particular,
	 \[val(v)=a+r+1, \quad V(T)=\{v\}\cup \bigsqcup_{i=1}^rV(T_i) \and  (T,w)\in \sT_{n+a,k+b},\] where $n=n_1+\cdots+n_r$ and $k=k_1+\cdots +k_r$. When $r=0$, we assume that $(\sT^+)^{\times r}/\symS_r$ consists of one element, and $\Phi$ sends it to a weighted rooted tree with one vertex.

By restricting $\Phi$ to the subset where $b>0$, we get a natural bijection
	 \[\Phi^+:\prod_{\substack{a,r\geq0,\\0<b\le a+r-2}}(\sT^+)^{\times r}/\symS_r\xrightarrow{~\cong~}\sT^+.\]
\end{lemma}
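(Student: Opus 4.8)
The plan is to construct $\Phi$ and its inverse explicitly and check they are mutually inverse, after verifying that $\Phi$ is well defined on $\symS_r$-orbits. The construction in the statement already describes the map at the level of representatives: given an $r$-tuple $((T_1,w_1),\dots,(T_r,w_r))$ of elements of $\sT^+$ together with $(a,b)$ with $0\le b\le a+r-2$, glue the outputs of the $T_i$ to a fresh root vertex $v$ carrying $a$ legs, and set $w(v)=b$. First I would check this is well defined: permuting the $T_i$ by $\sigma\in\symS_r$ produces an isomorphic weighted rooted tree (the abstract tree does not see an ordering of the branches at $v$), so $\Phi$ descends to the quotient by $\symS_r$. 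I would also note the bookkeeping identities $val(v)=a+r+1$, $V(T)=\{v\}\sqcup\bigsqcup_i V(T_i)$, $n=\sum n_i$, $k=\sum k_i$, so $(T,w)\in\sT_{n+a,k+b}$, and that the constraint $0\le b\le val(v)-3=a+r-2$ is exactly the weight condition (1) of Definition~\ref{def:wrt} at the root, while condition (2) at every other vertex $v'\in V(T_i)$ holds because $(T_i,w_i)\in\sT^+$ already satisfies the strict inequality $0<w_i(v')\le val(v')-3$ (and valencies are unchanged by the gluing). The $r=0$ case is the degenerate one: the empty product of trees maps to the one-vertex tree with $a$ legs and root weight $b$, covering exactly the $\sT_{n,k}$ with $n=a$, $k=b$, $0\le b\le a-2$.

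Next I would build the inverse. Given $(T,w)\in\sT$, let $v$ be the root, let $a$ be the number of inputs (legs, other than the output) attached to $v$, and let $b=w(v)$; removing $v$ together with its incident legs and edges disconnects $T$ into the connected components hanging off the $r$ edges that were at $v$. Each such component $T_i$ inherits a distinguished leg — the edge that used to join it to $v$ becomes the output of $T_i$ — and inherits the weight function $w|_{V(T_i)}$. I must check $(T_i,w_i)\in\sT^+$: every vertex of $T_i$ is a non-root vertex of $T$, hence satisfies $0<w(v')\le val(v')-3$, which is precisely the defining condition \eqref{eq:wt.cond.pos} of $\sT^+$ (in particular the root of $T_i$, being a non-root vertex of $T$, has strictly positive weight). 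The unordered collection $\{(T_i,w_i)\}$, i.e. its class in $(\sT^+)^{\times r}/\symS_r$, together with $(a,b)$, defines $\Psi(T,w)$; the inequality $0\le b=w(v)\le val(v)-3=a+r-2$ holds because $w$ is a weight function on $T$. It is then routine that $\Psi\circ\Phi=\id$ and $\Phi\circ\Psi=\id$: gluing the components back to a fresh root recovers $T$ (a tree is determined by a cut vertex and the rooted pieces hanging off it, with the legs at the cut vertex recorded by $a$), and cutting a glued tree at its root recovers the pieces and the data $(a,b)$.

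Finally, for $\Phi^+$ I would simply restrict everything to $b>0$. On the source side this is the sub-disjoint-union over $0<b\le a+r-2$; on the target side, $\Phi$ of such data is a tree whose root weight is $b>0$, and whose other vertices already have positive weight, so it lands in $\sT^+$ — and conversely $\Psi$ of an element of $\sT^+$ has $b=w(v)>0$. Hence $\Phi$ restricts to a bijection onto $\sT^+$, which is the asserted $\Phi^+$.

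I do not expect any genuine obstacle here; the only point requiring care is the $\symS_r$-equivariance/well-definedness (making sure the gluing genuinely forgets the order of the branches, so that $\Phi$ is a bijection rather than an $r!$-to-one map) and the careful matching of the weight inequalities under gluing and cutting — in particular that the root of each $T_i$ must carry strictly positive weight, which is what forces the factors to lie in $\sT^+$ rather than $\sT$, and conversely why allowing $b=0$ at the new root is exactly what enlarges $\sT^+$ to $\sT$. I would state these as the two verification steps and leave the "gluing and cutting are inverse" bookkeeping as routine.
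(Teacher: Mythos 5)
Your proposal is correct and follows essentially the same route as the paper: construct $\Phi$ by gluing at a fresh root, check well-definedness on $\symS_r$-orbits and the weight inequality at the new root, and invert by cutting at the root of $(T,w)$, observing that each resulting component lies in $\sT^+$ because its root was a non-root vertex of $T$. Your write-up is somewhat more detailed than the paper's (which declares the inverse ``obvious''), but there is no difference in substance.
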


\begin{proof}
	We prove that $\Phi$ is bijective. The same proof will work for $\Phi^+$.
	
	We first show that $\Phi^+$ is well defined. Note that any two elements in the same $\symS_r$-equivalence class have the same image, since inputs are unordered in our definition of weighted rooted trees. Since $0\le w(v)=b\leq a+r-2=val(v)-3$, the valency condition holds for the root $v$ of $(T,w)$ and by definition it is already satisfied for the other non-root vertices. Hence the  image $(T,w)$ is in $\sT$.

	The inverse map is obvious. Given $(T,w)\in \sT$ with the root $v$, we let $a$ be the number of inputs attached to $v$ and $b=w(v)\ge 0$. We associate the $\symS_r$-equivalences classes of the set of $r:=val(v)-1-a$ weighted rooted trees obtained by removing $v$ and $a$ inputs attached to $v$. One can check that this is indeed the inverse.
\end{proof}

For simplicity, we omit $w$ from the notation $(T, w)$ below and use $T$ to refer to weighted rooted trees.
\begin{lemma}\label{lem:key2}
	Let $a$, $b$ and $r$ be nonnegative integers with $0 \le b \le a+r-2$. Let $(r_1,\cdots,r_m)$ be a partition of $r$. For distinct $T_1,\cdots, T_m\in \sT^+$, let $T_\bullet \in (\sT^+)^r/\symS_r$ denote the element consisting of $r_j$ copies of $T_j$. Then,
 \[\ch\left(U_{\Phi(T_\bullet)}\right)=\sfh_a\cdot \prod_{j=1}^m\sfh_{r_j}\circ \ch_{n_j}\left(U_{T_j}\right)\]
 where $n_j$ are the numbers of inputs of $T_j$. We set $\prod_{j=1}^m(-)=1$ for $m=0$.
\end{lemma}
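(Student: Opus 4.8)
\textbf{Proof plan for Lemma~\ref{lem:key2}.}
The plan is to trace how the stabilizer subgroup $\Stab(\Phi(T_\bullet))$ decomposes, and then translate that group-theoretic decomposition into a product of Frobenius characteristics via Lemma~\ref{lem:plethysm.Ind}. First I would fix a labeled representative of $\Phi(T_\bullet)$: its $n+a$ inputs split into the $a$ inputs attached directly to the root $v$ and the inputs distributed among the $r$ subtrees, which consist of $r_j$ copies of $T_j$ for each $j$. A permutation in $\symS_{n+a}$ fixes the (unordered, weighted) labeled tree if and only if it preserves the set of root-inputs, preserves the partition of the remaining labels into the blocks belonging to the subtrees, permutes the $r_j$ isomorphic copies of $T_j$ among themselves, and acts on each copy of $T_j$ by an element of $\Stab(T_j)$. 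Hence
\[
\Stab(\Phi(T_\bullet)) \;\cong\; \symS_a \,\times\, \prod_{j=1}^m \bigl(\Stab(T_j)^{r_j} \rtimes \symS_{r_j}\bigr),
\]
sitting inside $\symS_{n+a} \supset \symS_a \times \prod_j \symS_{r_j n_j}$ in the evident block-diagonal way.

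Next I would compute $\ch_{\symS_{n+a}}(U_{\Phi(T_\bullet)}) = \ch(\Ind^{\symS_{n+a}}_{\Stab(\Phi(T_\bullet))}\mathbbm{1})$ by inducing in stages. Since $\ch$ is a ring homomorphism from $R$ to $\Lambda$ converting the induction product $V.W = \Ind(V\otimes W)$ into multiplication (\S\ref{ss:symrep}--\S\ref{ss:Frob}), and since induction is transitive, inducing $\mathbbm{1}$ from the block-diagonal subgroup $\symS_a \times \prod_j \bigl(\Stab(T_j)^{r_j}\rtimes\symS_{r_j}\bigr)$ up to $\symS_{n+a}$ factors as the product of $\ch(\Ind^{\symS_a}_{\symS_a}\mathbbm{1}) = \sfh_a$ with the characteristics $\ch\bigl(\Ind^{\symS_{r_j n_j}}_{\Stab(T_j)^{r_j}\rtimes\symS_{r_j}}\mathbbm{1}\bigr)$ for each $j$. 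For the latter I would note that $\mathbbm{1}$ on $\Stab(T_j)^{r_j}\rtimes\symS_{r_j}$ is exactly $(\Ind^{\symS_{n_j}}_{\Stab(T_j)}\mathbbm{1})^{\otimes r_j} = U_{T_j}^{\otimes r_j}$ with $\symS_{r_j}$ permuting the factors, induced from $(\symS_{n_j})^{r_j}\rtimes \symS_{r_j}$; by transitivity of induction through $(\symS_{n_j})^{r_j}\rtimes\symS_{r_j}$ this equals $\Ind^{\symS_{r_j n_j}}_{(\symS_{n_j})^{r_j}\rtimes \symS_{r_j}} U_{T_j}^{\otimes r_j}$, which is precisely $\sfh_{r_j}\circ \ch_{n_j}(U_{T_j})$ by Lemma~\ref{lem:plethysm.Ind}. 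Multiplying these together over $j$ and with $\sfh_a$ gives the claimed identity, with the $m=0$ case being the degenerate tree with a single vertex (Example~\ref{ex:UT1}), for which both sides equal $\sfh_a = \sfh_n$.

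The main obstacle I expect is the careful verification that $\Stab(\Phi(T_\bullet))$ really is the claimed wreath-product-type subgroup, rather than something larger: one must rule out ``accidental'' automorphisms of the labeled tree, which comes down to the fact that our weighted rooted trees have unordered inputs so a label-preserving automorphism is forced to fix each leg, hence fix each subtree setwise (since distinct subtrees carry distinct label sets) up to permuting genuinely isomorphic copies. A secondary technical point is bookkeeping the block-diagonal embedding so that the ``induction in stages'' argument is legitimate, i.e. the chain $\Stab(\Phi(T_\bullet)) \subset \symS_a \times \prod_j \bigl((\symS_{n_j})^{r_j}\rtimes\symS_{r_j}\bigr) \subset \symS_{n+a}$ consists of honest subgroup inclusions with the product group landing block-diagonally; this is routine but should be spelled out so the factorization of $\ch$ is justified. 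Once these are in place, the computation is a direct assembly of Lemma~\ref{lem:plethysm.Ind}, transitivity of induction, and the ring-homomorphism property of $\ch$.
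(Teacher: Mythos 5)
Your proof is correct and takes essentially the same approach as the paper: both hinge on identifying $\Stab(\Phi(T_\bullet))\cong \symS_a\times\prod_j\bigl(\Stab(T_j)^{r_j}\rtimes\symS_{r_j}\bigr)$ and then applying Lemma~\ref{lem:plethysm.Ind}. The paper's proof states this stabilizer decomposition and declares the conclusion ``immediate,'' while you additionally spell out the transitivity-of-induction and ring-homomorphism steps that the paper leaves implicit.
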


\begin{proof}
	This is immediate from Definition~\ref{def:U_T} and Lemma~\ref{lem:plethysm.Ind}, since we have \[\Stab(\Phi(T_\bullet))\cong \symS_a\times \prod_{j=1}^m \left(\Stab(T_{i_j})^{r_j}\rtimes \symS_{r_j}\right)\] by the construction of $\Phi$.
\end{proof}

\subsection{Recursion for $Q$}
We establish a recursion for $Q$ in this subsection. 

We begin with definitions of $Q_{n,k}^+$, $Q_n^+$ and $Q^+$.
\begin{definition}
	For $n\ge 2$, let
	\[Q_{n,k}^+:=\sum_{(T,w)\in \sT_{n,k}^+}\ch_{\symS_n}(U_{(T,w)}) \and Q^+_n:=\sum_{k=1}^{n-2}Q_{n,k}^+t^k.\]
	Define \[Q^+=\sfh_1 +\sum_{n\geq2,k>0}Q_{n,k}^+ t^k.\]
\end{definition}

Using Lemmas~\ref{lem:key1} and~\ref{lem:key2},
we first establish recursions for  
$Q_{n,k}$ and $Q_{n,k}^+$  \emph{without} auxiliary terms. Let
\[\bar Q:= \sum_{n\geq 2,k\geq0}Q_{n,k}t^k \and \bar Q^+ := \sum_{n\geq 2, k>0} Q_{n,k}^+ t^k.\]

\begin{proposition}\label{prop:main1}
	$\bar Q^+$ and $\bar Q$ satisfy 
	\[\bar Q^+=\sum_{\substack{a,r\geq0,\\ 0<b\leq a+r-2}}\sfh_a(\sfh_r\circ \bar Q^+)t^b \and \bar Q=\sum_{\substack{a,r\geq0, \\ 0\leq b\leq a+r-2}}\sfh_a(\sfh_r\circ \bar Q^+)t^b.\]	
\end{proposition}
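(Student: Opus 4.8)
The plan is to deduce both identities from the bijections $\Phi$ and $\Phi^+$ of Lemma~\ref{lem:key1} together with the character formula of Lemma~\ref{lem:key2}. I will treat $\bar Q^+$ first, using $\Phi^+$, and then observe that the argument for $\bar Q$ via $\Phi$ is identical except for the range of $b$. Start from the definition
\[\bar Q^+=\sum_{n\geq2,\,k>0}Q_{n,k}^+t^k=\sum_{n\geq2,\,k>0}\;\sum_{(T,w)\in\sT_{n,k}^+}\ch_{\symS_n}(U_{(T,w)})\,t^k=\sum_{(T,w)\in\sT^+}\ch(U_{(T,w)})\,t^{k(T,w)},\]
where $k(T,w)$ denotes the weight. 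Now partition $\sT^+$ according to the bijection $\Phi^+$: every $(T,w)\in\sT^+$ corresponds uniquely to a triple $(a,b)$ with $0<b\le a+r-2$ and an $\symS_r$-orbit of an $r$-tuple of elements of $\sT^+$. So the sum over $\sT^+$ becomes a sum over $a,r\ge0$, over $b$ in the stated range, and over $\symS_r$-orbits in $(\sT^+)^{\times r}/\symS_r$, and the exponent $t^{k(T,w)}$ factors as $t^b\cdot\prod t^{k_i}$ since weights add under $\Phi^+$.

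The key step is to identify, for fixed $(a,b,r)$, the contribution
\[\sum_{T_\bullet\in(\sT^+)^{\times r}/\symS_r}\ch(U_{\Phi^+(T_\bullet,a,b)})\,t^{k_1+\cdots+k_r}\]
with $\sfh_a\,(\sfh_r\circ\bar Q^+)\,t^b$. Here I group the orbits $T_\bullet$ by their underlying multiset type: an orbit of type $(r_1,\dots,r_m)\vdash r$ built from distinct $T_1,\dots,T_m\in\sT^+$. By Lemma~\ref{lem:key2}, its character is $\sfh_a\cdot\prod_{j=1}^m\sfh_{r_j}\circ\ch_{n_j}(U_{T_j})$, and the $t$-exponent is $b+\sum_j r_j k_j$ where $k_j$ is the weight of $T_j$. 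Summing over all choices of the data (a partition $(r_1,\dots,r_m)$ of $r$, then distinct $T_1,\dots,T_m\in\sT^+$) and comparing with the expansion of $\sfh_r\circ\bar Q^+$ given by Lemma~\ref{lem:plethysm.exp} — which expresses $\sfh_r\circ F$ for $F=\sum F_j$ exactly as $\sum_{(r_1,\dots,r_m)\vdash r}\sum_{i_1,\dots,i_m\text{ distinct}}\prod_j\sfh_{r_j}\circ F_{i_j}$ — yields the match, provided one indexes the summands $F_j$ of $\bar Q^+$ by the elements of $\sT^+$ themselves (with $F_{(T,w)}:=\ch(U_{(T,w)})\,t^{k(T,w)}$), so that $\bar Q^+=\sum_{(T,w)\in\sT^+}F_{(T,w)}$ and the ``distinct indices'' in Lemma~\ref{lem:plethysm.exp} correspond precisely to distinct weighted rooted trees. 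Pulling the $\sfh_a t^b$ factor out of the sum over $T_\bullet$ gives $\sfh_a\,(\sfh_r\circ\bar Q^+)\,t^b$, and summing over $a,r,b$ gives the claimed formula for $\bar Q^+$.

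For $\bar Q$, repeat verbatim with $\Phi$ in place of $\Phi^+$; the only change is that $b$ now ranges over $0\le b\le a+r-2$ (the root is allowed zero weight), and the $r=0$ case contributes the one-vertex trees with character $\sfh_a$ (weight $0$), matching the $r=0,\,b=0$ term $\sfh_a(\sfh_0\circ\bar Q^+)=\sfh_a$. The main obstacle I anticipate is bookkeeping: making the application of Lemma~\ref{lem:plethysm.exp} airtight requires checking its finiteness hypothesis (for each $(n,k)$ only finitely many $(T,w)\in\sT^+$ contribute to degree $t^k\Lambda_n$, which holds since weighted rooted trees with bounded weight and bounded inputs are finite in number), and keeping straight that ``distinct $T_j$'' on the tree side matches ``distinct summand-indices'' on the symmetric-function side rather than merely distinct characters $\ch(U_{T_j})$. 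Once that correspondence is set up cleanly, the computation is a direct unwinding of the two lemmas.
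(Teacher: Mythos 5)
Your proposal is correct and follows essentially the same route as the paper: decompose $\sT^+$ (resp.\ $\sT$) via the bijection $\Phi^+$ (resp.\ $\Phi$) of Lemma~\ref{lem:key1}, compute each orbit's contribution with Lemma~\ref{lem:key2}, and resum using the expansion of $\sfh_r\circ(-)$ over distinct summands from Lemma~\ref{lem:plethysm.exp}. The bookkeeping points you flag (finiteness hypothesis, matching distinct trees with distinct summand indices, and $\sfh_{r_j}\circ(t^{k_j}F)=t^{r_jk_j}(\sfh_{r_j}\circ F)$) are exactly the ones the paper's proof relies on.
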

\begin{proof}
	By Lemmas~\ref{lem:key1} and \ref{lem:key2}, $\bar Q^+$ is equal to the sum of
	\[\ch\left(U_{\Phi^+(T_\bullet)}\right)t^{k+b}=\sfh_at^b \prod_{j=1}^m\sfh_{r_j}\circ \left(\ch_{\symS_{n_j}}\left(U_{T_{j}}\right)t^{k_j}\right)\]
	for all possible combinations of
	\begin{itemize}
		\item integers $a,r\geq 0$ and $0<b\le a+r-2$;
		\item partitions $(r_1,\cdots, r_m)\vdash r$ with $m\geq 0$ and $r_j>0$;
		\item distinct weighted rooted trees $T_j\in \sT_{n_j,k_j}^+$,
	\end{itemize}
	where we set $m=0$ for $r=0$.
	In other words,
	 \[\bar Q^+=\sum_{\substack{a,r\geq0, \\ 0<b\leq a+r-2}}\sfh_at^b\sum_{\substack{(r_1,\cdots,r_m)\vdash r}}\;\sum_{\substack{T_j\in \sT^+ \text{distinct},\\ 1\leq j\leq m}}\;\prod_{j=1}^m\sfh_{r_j}\circ \left(\ch_{\symS_{n_j}}\left(U_{T_{j}}\right)t^{k_j}\right)\]
  where $T_j\in \sT_{n_j,k_j}^+$.
	 By Lemma~\ref{lem:plethysm.exp}, this is equal to
	 \[
	 	\sum_{\substack{a,r\geq0,\\ 0<b\leq a+r-2}}\sfh_at^b \left(  \sfh_r\circ \sum_{n,k\geq0}\sum_{T\in \sT_{n,k}^+}\ch_{\symS_n}\left(U_T\right) t^k\right)=\sum_{\substack{a,r\geq0,\\ 0<b\leq a+r-2}}\sfh_a \left(  \sfh_r\circ \bar Q^+\right)t^b.
	  \]
	 This completes a proof of the first formula. The same argument with $\Phi$ provides a proof of the second, which we omit.
\end{proof}

From Proposition~\ref{prop:main1} and the identities
\[Q=1+\sfh_1 + \bar Q \and Q^+=\sfh_1+\bar Q^+,\]
we deduce recursions for $Q^+$ and $Q$ as follows.

\begin{theorem}\label{thm:main1}
	$Q^+$ satisfies the following equivalent formulas
	\beq \label{eq:Q+}
	\begin{split}
		\mathrm{(Recursive)} \quad &Q^+=\sfh_1 +\sum_{r\geq 3}\left(\sum_{i=1}^{r-2}t^i\right)(\sfh_r\circ Q^+),\\
		\mathrm{(Exponential)} \quad &\Exp(tQ^+)=t^2\Exp(Q^+)+(1-t)(1+t+\sfh_1t ).
	\end{split}
		\eeq
 In particular, $\Exp(tQ^+)$ and $t^2\Exp(Q^+)$ are equal modulo low degree terms.

	Moreover, $Q$ is the plethystic exponential of $Q^+$:
	\beq \label{eq:Q}
	\begin{split}
		Q=\sum_{r\geq 0}\sfh_r\circ Q^+=\Exp(Q^+).
	\end{split}
	\eeq
	In particular, $\Log (Q)=Q^+$ satisfies the formulas in \eqref{eq:Q+}.
\end{theorem}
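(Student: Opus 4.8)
The plan is to deduce all the stated formulas from Proposition~\ref{prop:main1}, which already packages the combinatorics of Lemmas~\ref{lem:key1} and~\ref{lem:key2}. First I would handle the recursive form of \eqref{eq:Q+}. Starting from $Q^+=\sfh_1+\bar Q^+$ and the first identity of Proposition~\ref{prop:main1}, I would reorganize the double sum over $(a,r,b)$ with $0<b\le a+r-2$ by first fixing $r$ and summing the geometric-type contributions over $a\ge0$ and $1\le b\le a+r-2$. The key point is that $\sum_{a\ge0}\sfh_a\,t^b = \left(\sum_{a\ge0}\sfh_a\right)t^b$ and $\sum_{a\ge0}\sfh_a = \Exp(\sfh_1)$ (Example~\ref{ex:plethysm.-h1}), but more to the point, after collecting the $t$-powers one gets exactly a factor $\sum_{i=1}^{r-2}t^i$ multiplying $\sfh_r\circ Q^+$ once the $r=0,1,2$ terms (which contribute only the $\sfh_1$ and lower-order pieces) are separated out. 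Care is needed to check that the $r\le 2$ terms reassemble correctly into the leading $\sfh_1$; I would verify this by comparing against Example~\ref{ex:empty.cases+}, which pins down $\sT^+_{n,1}$ and hence the degree-$t^1$ part.

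Next I would prove \eqref{eq:Q}, that $Q=\Exp(Q^+)$. Again from Proposition~\ref{prop:main1}, the formula for $\bar Q$ differs from that for $\bar Q^+$ only in allowing $b=0$. Summing over $a,r\ge0$ and $0\le b\le a+r-2$, I would split off the $b=0$ contribution and recognize the whole expression, after summing the $\sfh_a$ factor as above, as $\sum_{r\ge0}(\text{something})\cdot(\sfh_r\circ \bar Q^+)$; tracking constants and the $r=0,1$ terms, together with $Q=1+\sfh_1+\bar Q$ and $Q^+=\sfh_1+\bar Q^+$, this should collapse to $Q=\sum_{r\ge0}\sfh_r\circ Q^+=\Exp(Q^+)$. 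The identity $\sfh_r\circ Q^+ = \sfh_r\circ(\sfh_1+\bar Q^+)$ can be expanded via \eqref{eq:plethysm.exp.two} when needed. Once $Q=\Exp(Q^+)$ is established, the claim $\Log(Q)=Q^+$ is immediate from the invertibility of $\Exp$ over $\Q$ (the Proposition after Example~\ref{ex:plethysm.-h1}), and so $\Log(Q)$ satisfies \eqref{eq:Q+} as well.

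For the exponential form in \eqref{eq:Q+}, I would apply $\Exp(t\,\cdot)$—or rather work with $\sum_{r\ge0}\sfh_r\circ(tQ^+)=\Exp(tQ^+)$ and use $\sfh_r\circ(tF)=t^r(\sfh_r\circ F)$—to the recursive formula. Substituting $Q^+=\sfh_1+\sum_{r\ge3}(\sum_{i=1}^{r-2}t^i)(\sfh_r\circ Q^+)$ and comparing with $\Exp(Q^+)=Q$, the telescoping identity $\sum_{i=1}^{r-2}t^i=\frac{t-t^{r-1}}{1-t}$ should let me rewrite $(1-t)\,tQ^+$ in terms of $t\sfh_1$ and $\sum_{r\ge0}(t^2-t^r)(\sfh_r\circ Q^+) = t^2\Exp(Q^+)-\Exp(tQ^+)$ up to the $r=0,1,2$ corrections, which account for the constant $(1-t)(1+t+\sfh_1 t)$ correction term; then applying $\Exp$ to a suitable additive rearrangement via Lemma~\ref{lem:Exp.homo} yields the stated multiplicative identity. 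The main obstacle I anticipate is purely bookkeeping: correctly isolating the low-degree terms ($r=0,1,2$, i.e.\ the $1$, $\sfh_1$, $\sfh_2$ and $t$-linear contributions) at each step so that the ``modulo low degree terms'' claims and the explicit correction polynomial $(1-t)(1+t+\sfh_1 t)$ come out exactly right—there is no conceptual difficulty beyond Proposition~\ref{prop:main1}, but the constant/degree-one terms must be tracked with care, cross-checking against $Q_2=\sfh_2$, $Q_3=(1+t)\sfh_3$ and Examples~\ref{ex:empty.cases} and~\ref{ex:empty.cases+}.
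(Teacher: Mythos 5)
Your overall route is the paper's: everything is deduced from Proposition~\ref{prop:main1}, the formula $Q=\Exp(Q^+)$ comes from splitting off the $b=0$ part and factoring $\sum_{a\ge0}\sfh_a=\Exp(\sfh_1)$, and the exponential form follows from the recursive one via $\sfh_r\circ(tF)=t^r(\sfh_r\circ F)$ and $\sum_{i=1}^{r-2}t^i=\frac{t-t^{r-1}}{1-t}$ exactly as in the paper's computation. Those two parts of your plan are sound as written (for the exponential form no further application of $\Exp$ is needed: once you evaluate $\sum_{r\ge0}\frac{t^2-t^r}{1-t}\,\sfh_r\circ Q^+$ using the recursive formula, you land directly on $-(1+t+\sfh_1 t)$ and multiply through by $1-t$).

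The one step that does not work as you describe it is the derivation of the recursive formula itself. You propose to fix $r$ and ``sum the geometric-type contributions over $a\ge0$,'' invoking $\sum_{a\ge0}\sfh_a=\Exp(\sfh_1)$; but in Proposition~\ref{prop:main1} the range of $b$ is $0<b\le a+r-2$, which depends on $a$, so the sum $\sum_{a,b}\sfh_a t^b$ does \emph{not} factor as $\bigl(\sum_a\sfh_a\bigr)\bigl(\sum_i t^i\bigr)$, and no amount of ``collecting $t$-powers'' will make the $\sfh_a$ factors disappear on their own. The mechanism that actually produces the clean factor $\sum_{i=1}^{r-2}t^i$ multiplying $\sfh_r\circ Q^+$ is the regrouping of $a$ with the plethysm index: by \eqref{eq:plethysm.exp.two}, $\sum_{a+r'=r}\sfh_a\,(\sfh_{r'}\circ\bar Q^+)=\sfh_r\circ(\sfh_1+\bar Q^+)=\sfh_r\circ Q^+$, and under this regrouping the constraint $0<b\le a+r'-2$ becomes the uniform constraint $0<b\le r-2$. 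The paper runs this in the opposite direction: it substitutes $\bar Q^+=Q^+-\sfh_1$ inside the plethysm, expands $\sfh_{r'}\circ(Q^+-\sfh_1)=\sum_{r+c=r'}(\sfh_r\circ Q^+)(-1)^c\sfe_c$, and kills all terms with $a+c>0$ using $\sum_{c=0}^{d}(-1)^c\sfh_{d-c}\sfe_c=0$. Either version works, but one of them is needed; your write-up gestures at the conclusion without supplying the regrouping, so you should make this step explicit.
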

\begin{proof}
	By the first formula in Proposition~\ref{prop:main1}, $Q^+-\sfh_1 =\bar Q^+$ is equal to
	\beq\label{20}\begin{split}
		\sum_{\substack{a,r'\geq0,\\0<b\leq a+r'-2}}\sfh_a   \left(\sfh_{r'}\circ (Q^+-\sfh_1 )\right)t^b.
	\end{split}\eeq
	By Lemma~\ref{lem:plethysm.exp} and Example~\ref{ex:plethysm.-h1}, we can expand:
	\[\sfh_{r'}\circ(Q^+-\sfh_1 )=\sum_{r+c=r',\;r,c\geq0}\left(\sfh_r\circ Q^+\right)\cdot(-1)^c\sfe_c.\]
	Applying this to \eqref{20}, $Q^+-\sfh_1 $ is equal to
	\[\begin{split}
		&\sum_{\substack{a,c,r\geq0,\\ 0<b\leq a+c+r-2}}(-1)^c\sfh_a\sfe_c   \left(\sfh_r\circ Q^+\right)t^b=\sum_{\substack{d,r\geq0,\\ 0<b\leq d+r-2}}\left(\sfh_r\circ Q^+\right)t^b\sum_{\substack{a+c=d\\a,c\geq0}}(-1)^c\sfh_a\sfe_c.
	\end{split}\]
	Since $\sum_{c=0}^d(-1)^c\sfh_{d-c}\sfe_c=0$ for $d>0$ (cf. \cite[(I.2.6)]{Mac}), this is finally equal to the partial sum with $a=c=d=0$:
	\[\sum_{0<b\leq  r-2}\left(\sfh_r\circ Q^+\right)t^b=\sum_{r\geq 3}\left(\sum_{i=1}^{r-2}t^i\right)(\sfh_r\circ Q^+).\]
	The second formula in \eqref{eq:Q+} follows from the identity
	\beq\label{21}\begin{split}
		&\frac{t^2\Exp(Q^+)-\Exp(tQ^+)}{1-t}=\sum_{r\geq0}\frac{t^2-t^r}{1-t}\sfh_r\circ Q^+\\
		&\qquad =-(1+t)-tQ^++t\sum_{r\geq 3}\frac{t-t^{r-1}}{1-t}\sfh_r\circ Q^+=-(1+t+\sfh_1t ).
	\end{split}
	\eeq
	In fact, \eqref{21} proves the equivalence of the two formulas in \eqref{eq:Q+}.
	
	The formula \eqref{eq:Q} follows from the second formula in Proposition~\ref{prop:main1}:
	\[\begin{split}
		\bar Q&=\bar Q^+ +\sum_{\substack{a,r\geq 0\\a+r\geq 2}}\sfh_a(\sfh_r\circ \bar Q^+) =\sum_{\substack{a,r\geq0,\\(a,r)\neq (0,0),(1,0)}} \sfh_a(\sfh_r\circ \bar Q^+)\\
		&=-(1+\sfh_1 )+\sum_{a\geq0}\sfh_a\sum_{r\geq0} \sfh_r\circ\bar Q^+\\
		&=-(1+\sfh_1 )+\Exp(\sfh_1 )\Exp(\bar Q^+)=-(1+\sfh_1 )+\Exp(Q^+).
	\end{split}
	\]
	Hence, $Q=1+\sfh_1+\bar Q=\Exp(Q^+)$.
\end{proof}

\begin{example}
Using \eqref{eq:Q+} and \eqref{eq:Q}, we can recursively calculate a few low degree terms in $Q$. Recall that $\sfh_r\circ (t^kF) = t^{rk} (\sfh_r\circ F)$.

By \eqref{eq:Q+}, clearly we have $Q^+_1=\sfh_1$ and $Q^+_2=0$ and $Q^+$ starts as \[Q^+ = \sfh_1 + \cdots.\] Substituting this back to \eqref{eq:Q+}, we get 
\[\begin{split}  Q^+ &=\sfh_1 + t(\sfh_3\circ (\sfh_1 + \cdots )) \\
&=\sfh_1 + t\sfh_3\circ \sfh_1 + \cdots \\
&=\sfh_1 + t\sfh_3 + \cdots \\\end{split}
\] up to degree three. 
We repeat this process to get 
\[\begin{split}  Q^+ &=\sfh_1 + t(\sfh_3\circ (\sfh_1 +  \cdots )) +(t+t^2)(\sfh_4\circ (\sfh_1 +  \cdots ))\\
&=\sfh_1 + t\sfh_3 +(t+t^2) \sfh_4+\cdots   \end{split}
\] up to degree four 
and 
\[\begin{split}  Q^+ &=\sfh_1 + t(\sfh_3\circ (\sfh_1 +t\sfh_3 \cdots )) +(t+t^2)(\sfh_4\circ (\sfh_1 + \cdots ))+\\
&\hspace{2em}(t+t^2+t^3)(\sfh_5\circ (\sfh_1 + \cdots ))+\cdots \\
&=\sfh_1 + t\sfh_3 +(t+t^2) \sfh_4+ t(\sfh_2\circ \sfh_1)(\sfh_1\circ (t \sfh_3)) +(t+t^2+t^3)\sfh_5+\cdots \\
&=\sfh_1 + t\sfh_3 +(t+t^2) \sfh_4+ t^2 \sfh_{(3,2)}+ (t+t^2+t^3)\sfh_5+ \cdots
\end{split}
\] up to degree five. 
By taking the plethystic exponential, we get 
\[\begin{split}  Q &=1+\sfh_1 + t\sfh_3 +(t+t^2) \sfh_4+ t^2 \sfh_{(3,2)}+ (t+t^2+t^3)\sfh_5 +\\
& \hspace{2em}\sfh_2\circ (\sfh_1 + t\sfh_3  +(t+t^2) \sfh_4) +\sfh_3\circ (\sfh_1  + t\sfh_3)+\sfh_4\circ (\sfh_1)+\sfh_5\circ (\sfh_1 ) \cdots \\
&=1+\sfh_1 + t\sfh_3 +(t+t^2) \sfh_4+ t^2 \sfh_{(3,2)}+ (t+t^2+t^3)\sfh_5 +\\
& \hspace{2em}\sfh_2 + t\sfh_{(3,1)}+ (t+t^2)\sfh_{(4,1)} +\sfh_3+ t\sfh_{(3,2)}+\sfh_4+\sfh_5 \cdots\\
&=1+\sfh_1+ \sfh_2  + (1+t)\sfh_3 +(1+t+t^2) \sfh_4 + t\sfh_{(3,1)} + \\
& \hspace{2em} (1+t+t^2+t^3)\sfh_5 +(t+t^2)(\sfh_{(4,1)}+\sfh_{(3,2)})+  \cdots
\end{split}
\] up to degree five.
In the above computation, we repeatedly used Lemma \ref{lem:plethysm.exp} to expand the plethysm. Note that to compute $Q$ up to degree $n$, we need $Q^+$ up to degree $n$.
\end{example}

In general, we can derive inductive formulas for $Q_n^+$ and $Q_n$ with respect to $n$ as follows.

\begin{corollary}
\label{cor:main1}
	Set $Q_1^+=\sfh_1$. For $n\geq2$, $Q_n^+$ and $Q_n$ satisfy the following.
	\[\begin{split}
	    Q_n^+&=
	    \sum_{\lambda\vdash n}
	    \left(\sum_{i=1}^{\ell(\lambda)-2}t^i\right)\prod_{j=1}^m\left(\sfh_{r_j}\circ Q_{n_j}^+\right) \and
		Q_n=
		\sum_{\lambda\vdash n}
		\prod_{j=1}^m\left(\sfh_{r_j}\circ Q_{n_j}^+\right)
	\end{split}\]
	where $n_j$ denote the parts of $\lambda$ with multiplicities $r_j$ so that $\lambda=(n_1^{r_1},\cdots, n_m^{r_m})$ with $n_1>\cdots >n_m>0$
 and $\ell(\lambda)=\sum_{j=1}^mr_j$.
\end{corollary}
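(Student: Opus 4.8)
The plan is to derive Corollary~\ref{cor:main1} by extracting the coefficient of each fixed power of $q$ (equivalently, the degree-$n$ homogeneous part) from the two recursive identities for $\bar Q^+$ and $\bar Q$ in Proposition~\ref{prop:main1}, after re-expressing the combinatorial sums there in terms of partitions via Lemma~\ref{lem:plethysm.exp}. The point is that the sums $\sum_{a,r\geq 0,\,0<b\le a+r-2}\sfh_a(\sfh_r\circ \bar Q^+)t^b$ and its $b\ge 0$ analogue, once the inner plethysm $\sfh_r\circ \bar Q^+$ is expanded by Lemma~\ref{lem:plethysm.exp} into a sum over partitions $(r_1,\dots,r_m)\vdash r$ and distinct weighted trees $T_j\in\sT^+$, already encode the partition-indexed products $\prod_j(\sfh_{r_j}\circ Q^+_{n_j})$; collecting by total number of inputs $n$ produces the stated formulas.

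First I would show the simplification $\sum_a \sfh_a \cdot(-) $ collapses: this is exactly the manipulation already carried out in the proof of Theorem~\ref{thm:main1}, where the sum $\sum_{a+c=d}(-1)^c\sfh_a\sfe_c$ over the ``$-\sfh_1$'' correction vanishes for $d>0$. So, starting from the clean Proposition~\ref{prop:main1} (which has no auxiliary $\sfh_a$ issue because $\bar Q^+$, not $Q^+$, appears), I note that the $\sfh_a$ factor with $a\ge 1$ contributes only inputs but no new tree structure; pairing it with the known expansion of $\bar Q^+ = Q^+ - \sfh_1$ one recovers, degree by degree, that the effective recursion is governed only by $r$ and $b$ with $0<b\le r-2$ (respectively $0\le b\le r-2$). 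Concretely I would substitute $Q^+ = \sfh_1 + \bar Q^+$ and $Q = 1 + \sfh_1 + \bar Q$, use $\sfh_r\circ(t^k F)=t^{rk}(\sfh_r\circ F)$ and Lemma~\ref{lem:plethysm.exp} to write $\sfh_r\circ Q^+ = \sum_{(r_1,\dots,r_m)\vdash r}\sum_{n_1>\cdots>n_m}\prod_j \sfh_{r_j}\circ Q^+_{n_j}$ when restricted to the degree-$n$ part, and then read off $Q^+_n$ and $Q_n$.

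The key bookkeeping step is matching the index sets: a partition $\lambda\vdash n$ written as $(n_1^{r_1},\dots,n_m^{r_m})$ with $n_1>\cdots>n_m>0$ corresponds bijectively to a choice of $r=\ell(\lambda)=\sum_j r_j$ distinct weighted trees $T_j$ with $n_j$ inputs each appearing $r_j$ times (where ``$Q^+_{n_j}$'' packages the sum over all such trees with a fixed number of inputs, and $\sfh_{r_j}\circ(-)$ symmetrizes over the $r_j$ identical copies — this is precisely Lemma~\ref{lem:key2}). The range $0<b\le a+r-2$ becomes, after the collapse of the $a$-sum, $0<b\le \ell(\lambda)-2$, giving the factor $\sum_{i=1}^{\ell(\lambda)-2}t^i$ for $Q^+_n$; dropping the lower constraint gives the factor $\sum_{i=0}^{\ell(\lambda)-2}t^i$, but the $i=0$ term together with all partitions recombines into exactly $Q_n = \sum_\lambda \prod_j(\sfh_{r_j}\circ Q^+_{n_j})$ — indeed this is just the degree-$n$ part of $Q=\Exp(Q^+)$ from Theorem~\ref{thm:main1}, expanded via $\Exp(Q^+)=\prod_{\text{trees }T}\Exp(U_T t^{\cdots})$-type plethystic identities, so one may alternatively invoke \eqref{eq:Q} directly and just expand $\Exp$.

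I expect the main obstacle to be purely organizational rather than mathematical: carefully verifying that the passage from the ``$0<b\le a+r-2$'' double sum to the ``$0<b\le \ell(\lambda)-2$'' single sum is valid — i.e., that the spurious $\sfh_a$ contributions with $a\ge 1$ cancel after substituting $\bar Q^+ = Q^+-\sfh_1$ — and doing so without re-deriving Theorem~\ref{thm:main1} from scratch. The cleanest route is to deduce the $Q_n$ formula immediately from \eqref{eq:Q} by taking degree-$n$ parts and expanding the plethystic exponential via Lemma~\ref{lem:plethysm.exp}, and then obtain the $Q^+_n$ formula by taking degree-$n$ parts of the recursive identity in \eqref{eq:Q+}, using once more that $\sfh_r\circ Q^+$ in degree $n$ unpacks into the partition sum; the only care needed is identifying $\ell(\lambda)=r$ with the $r$ in the factor $\sum_{i=1}^{r-2}t^i$. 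This makes the corollary a short consequence of Theorem~\ref{thm:main1} and Lemma~\ref{lem:plethysm.exp}, with no genuinely new input.
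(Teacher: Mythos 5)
Your proposal is correct, and the route you identify as ``cleanest'' at the end --- taking degree-$n$ homogeneous parts of the recursive identity \eqref{eq:Q+} and of $Q=\Exp(Q^+)$ in \eqref{eq:Q}, and expanding $\sfh_r\circ Q^+=\sfh_r\circ(\sum_{j}Q_j^+)$ via Lemma~\ref{lem:plethysm.exp} so that the distinct indices $n_j$ with multiplicities $r_j$ assemble into partitions $\lambda=(n_1^{r_1},\dots,n_m^{r_m})\vdash n$ with $\ell(\lambda)=r$ --- is exactly the paper's proof. The preliminary detour through Proposition~\ref{prop:main1} and the $\sum_{a+c=d}(-1)^c\sfh_a\sfe_c$ cancellation is unnecessary, as you yourself note, since that work is already packaged into Theorem~\ref{thm:main1}.
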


\begin{proof}
	These formulas 
    follow from the first formula in \eqref{eq:Q+} and the formula \eqref{eq:Q} respectively, by expanding $\sfh_r\circ Q^+=\sfh_r\circ(\sum_{n>0}Q_n^+)$
	using Lemma~\ref{lem:plethysm.exp},
	and considering the homogeneous pieces in $\Lambda_n\otimes_\Z \Z[t]$.
\end{proof}

\begin{remark}\label{rem:comp.rep}
Based on Corollary~\ref{cor:main1} and Theorem~\ref{thm:CKL}, we implemented the calculation of $P_n$ and $Q_n$ in a Mathematica program and computed them for $n\leq 25$. The program ran efficiently for values up to $n=20$, completing within an hour on a standard PC. Beyond this point, the computation time increases significantly, requiring several days to compute for $n=25$.
Based on this calculation, we verified Conjectures~\ref{conj:equiv.lc} and \ref{conj:mult.lc} for $n\leq 25$.
\end{remark}

\begin{example}[$k\leq 3$] One can also perform an induction on $k$, instead of $n$. 
	\[\begin{split}
		Q_{n,0}=&~\sfh_n \qquad (n\geq 2).\\
		Q_{n,1}=&~\sfh_n+\sum_{\substack{a+b=n\\ a\geq1,\;b\geq3}}\sfh_{(a,b)} \qquad (n\geq 3).\\
		Q_{n,2}=&~\sfh_n+\sum_{\substack{a+b=n\\a\geq1,\;b\geq4}}\sfh_{(a,b)}+\sum_{\substack{a+b=n\\ a\geq2,\;b\geq3}}\sfh_{(a,b)}+\sum_{\substack{a+b+c=n\\a\geq 3,\; b\geq 2,\;c\geq 1}}\sfh_{(a,b,c)}\\
		&+\sum_{\substack{a+2b=n\\ a\geq 3,\; b\geq0}}(\sfh_2\circ \sfh_a)\sfh_b+\sum_{\substack{a+b+c=n\\3\leq a<b,\; c\geq0}}\sfh_{(a,b,c)}  \qquad (n\geq 4).
	\end{split} \]
	The last two coincide with \cite[Example 5.14]{CKL}.
	
	Moreover, $Q_{n,3}$ is equal to
	\[\begin{split}
		&~\sfh_n+\sum_{\substack{a+b=n\\a\geq1,\; b\geq5}}\sfh_{(a,b)}+\sum_{\substack{a+b=n\\a\geq2,\;b\geq4}}\sfh_{(a,b)}+\sum_{\substack{a+b=n\\a\geq3,\;b\geq3}}\sfh_{(a,b)}+\sum_{\substack{a+b+c=n\\a,b\geq2,\;c\geq3}}\sfh_{(a,b,c)}\\
		&+\sum_{\substack{a+b+c=n\\a\geq1,\;b\geq2,\;c\geq 4}}\sfh_{(a,b,c)}+\sum_{\substack{a+b+c=n\\a,b\geq3,\; c\geq1}}\sfh_{(a,b,c)}+\sum_{\substack{a+b+c=n\\ a\geq3,\;b\geq4,\;c\geq0}}\sfh_{(a,b,c)}\\
		&+\sum_{\substack{2a+b=n\\a\geq3,\; b\geq 1}}(\sfh_2\circ \sfh_a)\sfh_{b}+\sum_{\substack{a+b+c=n\\3\leq a<b,\; c\geq 1}}\sfh_{(a,b,c)}+\sum_{\substack{a+b+c+d=n\\a\geq1,\;b,c\geq2,\;d\geq3}}\sfh_{(a,b,c,d)}\\
		&+\sum_{\substack{2a+b+c=n\\a\geq 3,\;b,c\geq 1}}(\sfh_2\circ \sfh_a)\sfh_{(b,c)}+\sum_{\substack{a+b+c+d=n\\3\leq a<b,\; c,d\geq 1}}\sfh_{(a,b,c,d)}+\sum_{\substack{a+b+c+d=n\\a\geq 2,\;b,c\geq 3,\;d\geq0}}\sfh_{(a,b,c,d)}\\
		&+\sum_{\substack{a+b+c+d=n\\3\leq a<b<c,\; d\geq0}}\sfh_{(a,b,c,d)}+\sum_{\substack{2a+b+c=n\\ a,b\geq3,\;a\neq b,\;c\geq0}}(\sfh_2\circ \sfh_a)\sfh_{(b,c)}+\sum_{\substack{3a+b=n\\a\geq3,\;b\geq0}}(\sfh_3\circ \sfh_a)\sfh_b,
	\end{split}\]
	for $n\geq 5$.
	
	Note that the formulas for $P_{n,k}$ with $k\leq 3$ are given in \cite[Corollary~6.2]{CKL}.
\end{example}

\begin{remark}\label{rem:Manin}
	Theorem~\ref{thm:main1} recovers Manin's characterization in \cite[Theorem~0.3.1]{Man} of the generating series of the Poincar\'e polynomial of $\Mbar_{0,n+1}$
	\[\varphi_n:=\sum_{k\geq0}^{n-2}\dim H^{2k}(\Mbar_{0,n+1})t^k ~\in \Z[t].\] Define its generating series as in \cite[\S0.3]{Man}:
	\[\varphi:=q+\sum_{n\geq2}\frac{\varphi_n}{n!}q^n. \]
	Let $\mathrm{rk}:\Lambda\llbracket t\rrbracket\to \Q\llbracket q,t\rrbracket$ be the \emph{rank homomorphism} defined in \cite[\S7.1]{GeKa} by
	\[\sfh_n\mapsto \frac{q^n}{n!}, \quad \text{or equivalently, }~\ch_{\symS_n}(V)\mapsto \frac{\dim (V) q^n}{n!} ~\text{ for }V\in R_n\]
	and $t^k\mapsto t^k$ for all $k$. Then it satisfies $\mathrm{rk}\circ \Exp(-)=\exp\circ\mathrm{rk}(-)$ (cf.~\cite[\S8.4]{GeKa}). Applying $\mathrm{rk}(-)$ to $Q$ and the formulas in Theorem~\ref{thm:main1}, one can immediately deduce that $\mathrm{rk}(Q)=1+\varphi$ and this is the unique solution to
	\beq\label{eq:exp.manin}\exp(t\log(1+\varphi))=t^2(1+\varphi) +(1-t)(1+t+qt)\eeq
	which is equivalent to
	\[(1+\varphi)^t=t^2\varphi +1+(1-t)qt,\]
	the equation written in \cite[(0.7)]{Man}. This shows that Theorem~\ref{thm:main1} is an equivariant generalization of Manin's characterization of $\varphi$.
	
	Similarly for the generating series $\chi:=\varphi(q,1)$ of the Euler characteristics of $\Mbar_{0,n+1}$, by differentiating \eqref{eq:exp.manin} by $t$ and specializing to $t=1$, we get
	\[(1+\chi)\log(1+\chi)=2\chi-q\]
	which is the equation written in \cite[(0.9)]{Man}.

\end{remark}

\begin{remark}
	There are dualities for $Q$ and $Q^+$:
	\[Q_{n,k}=Q_{n,n-2-k} \and Q^+_{n,k}=Q^+_{n,n-1-k}.\] Indeed, there is a bijection $\sT_{n,k}^+\cong \sT_{n,n-1-k}^+$, which sends $(T,w)$ to $(T,w^t)$ with $w^t(v):=val(v)-2-w(v)$ for every vertex $v$ of $T$. One can check that this is well defined. Similarly, we have $\sT_{n,k}\cong \sT_{n,n-2-k}$ (cf. \cite[Remark 5.13]{CKL}).
\end{remark}

\bigskip

\section{Multiplicities of the trivial representation}
\label{s:invariant}
As we have a recursive algorithm to compute the $\symS_n$-representations on the cohomology of $\Mbar_{0,n+1}$ and $\Mbar_{0,n}$, it is interesting to study the behavior of the multiplicities of each irreducible representation. In this section, we focus on the multiplicities of the trivial representation, which give the Poincar\'e polynomials of $\Mbar_{0,n+1}/\symS_n$ and $\Mbar_{0,n}/\symS_n$.

The key idea is to define an operation $\Inv$ that extracts the multiplicities of the trivial representations and to show that it commutes with the plethysm product (Proposition~\ref{prop:Inv.plethysm}). By applying this operation to the formulas in the previous section, we will establish a recursive algorithm to compute the Poincar\'e polynomials of $\Mbar_{0,n+1}/\symS_n$ and $\Mbar_{0,n}/\symS_n$.

\subsection{Projection to the invariant parts} \label{ss:Inv}
We define an operation which reads off the multiplicity of the trivial representation.
\begin{definition}
	For $n\geq 0$, define a projection map
	\[\Inv_n:\Lambda_n \lra \Z, \quad \sum_{\lambda\vdash n}c_\lambda \sfs_\lambda \mapsto c_{(n)}\]
	where we set $\sfs_{(0)}:=1$.
	This naturally extends to a (bigraded) map
	\[\Inv:\Lambda\llbracket t\rrbracket \lra \Z\llbracket q,t\rrbracket , \quad \sum_{n,k\geq0}F_{n,k}t^k\mapsto \sum_{n,k\geq0}\Inv_n(F_{n,k})q^nt^k \]
	where $F_{n,k}\in \Lambda_n$ and $q$ is a formal variable that records $n$.
\end{definition}
\begin{remark}\label{rem:Inv.rep}
	(1) For an $\symS_n$-representation $V$, $\Inv_n(\ch_{\symS_n}(V))=\dim V^{\symS_n}$. So, if $V$ is a transitive permutation representation, then $\Inv_n(\ch_{\symS_n}(V))=1$.
	
	(2) For every partition $\lambda$, $\Inv(\sfh_\lambda)=1$. In particular, 
 	\[\Inv \left(\sum_{n,k\geq0,\lambda\vdash m}d_{\lambda,k} \sfh_\lambda t^k \right)= \sum_{n,k\geq0,\lambda\vdash n}d_{\lambda,k} q^nt^k.\]

  (3) By (2), it is straightforward to check that $\Inv:\Lambda\llbracket t\rrbracket \lra \Z\llbracket q,t\rrbracket$ is the $\Z$-algebra homomorphism sending $t$ to $t$ and $\sfh_n$ to $q^n$.
\end{remark}

The following 
will be useful in the proof of Proposition~\ref{prop:Inv.plethysm} below.
\begin{lemma}\label{lem:Inv.plethysm}
	$\Inv(\sfh_r\circ \sfh_\lambda)=1$ for $r\geq0$ and partitions $\lambda\vdash n$ with $n\geq 1$.
\end{lemma}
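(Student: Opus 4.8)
The plan is to prove the statement $\Inv(\sfh_r\circ \sfh_\lambda)=1$ for all $r\geq 0$ and all partitions $\lambda\vdash n$ with $n\geq 1$. The key tool is Remark~\ref{rem:Inv.rep}(1): if $V$ is a transitive permutation representation of a symmetric group, then $\Inv$ of its Frobenius characteristic equals $1$. So it suffices to recognize $\sfh_r\circ\sfh_\lambda$ as the Frobenius characteristic of a transitive permutation representation. First I would note that $\sfh_\lambda=\ch_{\symS_n}(M^\lambda)$ by \eqref{eq:ch.M.to.h}, where $M^\lambda=\Ind^{\symS_n}_{\symS_\lambda}\mathbbm{1}=U_{\symS_\lambda}$ is the transitive permutation representation attached to the Young subgroup $\symS_\lambda$. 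Then by Lemma~\ref{lem:plethysm.Ind},
\[
\sfh_r\circ\sfh_\lambda=\sfh_r\circ\ch_{\symS_n}(M^\lambda)=\ch_{\symS_{rn}}\!\left(\Ind^{\symS_{rn}}_{(\symS_n)^r\rtimes\symS_r}(M^\lambda)^{\otimes r}\right).
\]

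The remaining point is that the right-hand side is the Frobenius characteristic of a \emph{transitive} permutation representation. Since $M^\lambda$ has a $\symS_n$-invariant basis given by the cosets $\symS_n/\symS_\lambda$ — a single orbit — its $r$-th tensor power $(M^\lambda)^{\otimes r}$ has the invariant basis $(\symS_n/\symS_\lambda)^r$ on which $(\symS_n)^r\rtimes\symS_r$ acts (componentwise through $(\symS_n)^r$, permuting factors through $\symS_r$), and this action is transitive on that basis: any tuple of cosets can be moved to the base tuple componentwise, so the induced representation has an invariant basis consisting of a single $\symS_{rn}$-orbit, namely $\symS_{rn}/\big((\symS_n)^r\rtimes\symS_r\big)$ — concretely, it is the permutation module on the set of ways to partition $\{1,\dots,rn\}$ into an unordered collection of $r$ blocks of size $n$, each block further equipped with the set-partition structure recorded by $\lambda$. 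Being transitive, Remark~\ref{rem:Inv.rep}(1) gives $\Inv=1$. I would also dispatch the boundary case $r=0$ separately: $\sfh_0\circ\sfh_\lambda=1$ by property (3) of the plethysm, and $\Inv(1)=1$ since $1=\sfh_{(0)}$ with coefficient $1$ on $\sfs_{(0)}$ (alternatively, $r=1$ gives $\sfh_1\circ\sfh_\lambda=\sfh_\lambda$, with $\Inv(\sfh_\lambda)=1$ by Remark~\ref{rem:Inv.rep}(2)).

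Alternatively — and perhaps more cleanly for a short write-up — one can avoid the representation-theoretic language entirely and argue combinatorially: by Remark~\ref{rem:Inv.rep}(3), $\Inv$ is the $\Z$-algebra homomorphism $\Lambda\llbracket t\rrbracket\to\Z\llbracket q,t\rrbracket$ with $\sfh_n\mapsto q^n$. One would expand $\sfh_r\circ\sfh_\lambda$ in the $\sfh$-basis using the monomial-positivity of plethysm of complete homogeneous functions and check the coefficients sum correctly; but this is messier, so I would present the representation-theoretic proof. The only genuine obstacle is verifying transitivity of the $\symS_{rn}$-action on the invariant basis — i.e.\ confirming that the induced-module basis $(\symS_n/\symS_\lambda)^r$, acted on by $(\symS_n)^r\rtimes\symS_r$, forms a single orbit — but this is immediate from the description above, since within each of the $r$ slots the $\symS_n$-factor acts transitively on $\symS_n/\symS_\lambda$, and the inducing step from a subgroup to $\symS_{rn}$ automatically yields a transitive permutation module. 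Hence $\Inv(\sfh_r\circ\sfh_\lambda)=1$ in all cases.
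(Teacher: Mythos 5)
Your proof is correct and follows essentially the same route as the paper: identify $\sfh_\lambda$ with $\ch_{\symS_n}(M^\lambda)$, apply Lemma~\ref{lem:plethysm.Ind}, and observe that the induced module $\Ind^{\symS_{rn}}_{(\symS_n)^r\rtimes\symS_r}(M^\lambda)^{\otimes r}$ is a transitive permutation representation, so its invariant part is one-dimensional. Your explicit verification of transitivity and separate treatment of the boundary case $r=0$ (where Lemma~\ref{lem:plethysm.Ind} does not literally apply) are slightly more careful than the paper's one-line justification, but the argument is the same.
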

\begin{proof}
	By \eqref{eq:ch.M.to.h}, Lemma~\ref{lem:plethysm.Ind} and Remark~\ref{rem:Inv.rep}, we have for $V=M^\lambda$,
	\[
	\Inv(\sfh_r\circ \sfh_\lambda)=\Inv\left(\sfh_r\circ \ch_{\symS_n}(V)\right)=\dim \left(\Ind^{\symS_{rn}}_{(\symS_n)^r\rtimes \symS_r}V^{\otimes r}\right)^{\symS_{rn}}.
	\]
Since $V$ is a transitive permutation representation of $\symS_n$, 
$\Ind^{\symS_{rn}}_{(\symS_n)^r\rtimes \sfh_r}V^{\otimes r}$ is a transitive permutation representation of $\symS_{rn}$. Hence $\Inv(\sfh_r\circ \sfh_\lambda)=1$.
\end{proof}

\begin{remark}\label{rmk:invhr}
	The same argument proves $\Inv(\sfh_r\circ \ch_{\symS_n}(V))=1$ for transitive permutation representations $V$. By applying it to the regular representation $V=M^{(1^n)}$, one can also deduce that
	\[\Inv(\sfh_r\circ \sfs_\lambda)=\begin{cases}
		1 &\text{if }\lambda=(n),\\
		0 &\text{otherwise}
	\end{cases}\]
	for Schur functions $\sfs_\lambda$.
\end{remark}

\subsection{Plethystic exponential}\label{ss:function.A}
The plethysm and the plethystic exponential defined in \S\ref{ss:plethysm} and \S\ref{ss:Exp} respectively are in fact well defined on other extended power series rings as well (cf.~\cite{Get2}). We consider these operations on $\Lambda\llbracket q,t\rrbracket$.

Define the plethysm $F\circ (-):\Z\llbracket q,t\rrbracket \to \Z\llbracket q,t \rrbracket$ for $F\in \Lambda$ as the operation uniquely determined by the following conditions
\begin{enumerate}
	\item $\sfp_n\circ f=f^{[n]}$;
	\item $(F+G)\circ f=F\circ f+G\circ f$;
	\item $(FG)\circ f=(F\circ f)(G\circ f)$
\end{enumerate}
for  $f\in \Z\llbracket q,t\rrbracket$, $n\geq 1$ and $F,G\in \Lambda$, where $f^{[n]}$ is defined as
\beq \label{eq:bracket.power}f^{[n]}(q,t)=f(q^n,t^n) \eeq
for $n\geq 1$ and we set $f^{[0]}:=1$.
More precisely, if $F=\sum_\lambda c_\lambda \sfp_\lambda$, then 
\begin{equation}\label{eq:pleth}
    F \circ f = \sum_\lambda c_\lambda f^{[\lambda]}
\end{equation}
where we write $f^{[\lambda]}:=f^{[\lambda_1]}\cdots f^{[\lambda_\ell]}$ for $\lambda=(\lambda_1,\cdots,\lambda_\ell)$.

We define the plethystic exponential as before. Let $\Z\llbracket q,t \rrbracket_+$ denote the subgroup of $\Z\llbracket q,t \rrbracket$ consisting of elements with no constant terms. Let
\begin{equation}\label{eq:PlethExp}
\Exp(f):=\exp\sum_{r\geq 1}\frac{\sfp_r}{r}\circ f =\exp\sum_{r\geq 1}\frac{f^{[r]}}{r}=\sum_{r\geq0}\sfh_r\circ f
\end{equation}
for $f\in \Z\llbracket q,t\rrbracket_+$. In particular, $\Exp(f)\in 1+ \Z\llbracket q,t \rrbracket_+$.
Note also that
\beq\label{eq:expmult} \Exp(f+g)= \Exp(f)\Exp(g), \eeq
as before and hence $\Exp(-f)=\Exp(f)^{-1}$.

\begin{example}[Monomials]\label{ex:A.monomial}
Let $a, n, k$ be integers with $n,k\geq0$. By \eqref{eq:PlethExp}, we have
\beq \label{eq:A.monomial}
\begin{split}
    \Exp(a q^nt^k) &= \exp\sum_{r\geq 1}\frac{a q^{rn}t^{rk}}{r} = \exp(-a \ln(1-q^nt^k))\\&=(1-q^nt^k)^{-a},
\end{split}  \eeq
provided that $(n,k)\ne (0,0)$.
Since $\Exp(a q^nt^k) =  \sum_{r\geq1} \sfh_r\circ(a q^nt^k)$, one can see that
\[\sfh_r\circ(a q^nt^k)= (-1)^r \binom{-a}{r}q^{rn}t^{rk},  \]
where $\binom{-a}{r}= (-1)^r\binom{a+r-1}{r}$ when $a>0$.
\end{example}

\begin{lemma}\label{lem:A.homomorphism}
	Let $f=\sum_{n,k}a_{n,k}q^nt^k$ with $a_{0,0}=0$. Then,
	\beq\label{eq:A.prod.form} \Exp(f)=\prod_{n,k\geq0, \; (n,k)\neq (0,0)}(1-q^nt^k)^{-a_{n,k}} .\eeq
\end{lemma}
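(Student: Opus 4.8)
The plan is to reduce the general statement to the monomial case established in Example~\ref{ex:A.monomial}, using the multiplicativity of $\Exp$ from \eqref{eq:expmult}. The key point is that $\Exp$ converts (possibly infinite) sums into products, exactly as the ordinary exponential does, and the monomial computation \eqref{eq:A.monomial} already identifies each factor.

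First I would write $f = \sum_{(n,k)\neq(0,0)} a_{n,k}q^nt^k$ and note that this sum is summable in $\Z\llbracket q,t\rrbracket_+$ in the sense that each bidegree $(N,K)$ receives contributions from only finitely many terms. For a finite truncation $f_{\le(N,K)} = \sum_{(n,k)\le(N,K),\,(n,k)\neq(0,0)} a_{n,k}q^nt^k$, repeated application of \eqref{eq:expmult} gives
\[
\Exp\!\left(f_{\le(N,K)}\right) = \prod_{(n,k)\le(N,K),\,(n,k)\neq(0,0)} \Exp\!\left(a_{n,k}q^nt^k\right) = \prod_{(n,k)\le(N,K),\,(n,k)\neq(0,0)} (1-q^nt^k)^{-a_{n,k}},
\]
where the second equality is \eqref{eq:A.monomial}. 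Then I would check that both sides of \eqref{eq:A.prod.form} are continuous with respect to the $(q,t)$-adic topology, i.e.\ the coefficient of $q^Nt^K$ in $\Exp(f)$ and in the infinite product \eqref{eq:A.prod.form} depends only on $f_{\le(N,K)}$. For the right-hand product this is clear since any factor with $n>N$ or $k>K$ contributes $1$ modulo $(q^{N+1},t^{K+1})$; for the left-hand side one uses $\Exp(f) = \Exp(f_{\le(N,K)})\cdot\Exp(f - f_{\le(N,K)})$ together with the fact that $\Exp$ of an element in $(q^{N+1},t^{K+1})$ is $1$ modulo that ideal. Passing to the limit over $(N,K)$ yields \eqref{eq:A.prod.form}.

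I expect the main obstacle to be purely bookkeeping: making precise the convergence of the infinite product and the exchange of the limit with $\Exp$, since $\Exp$ is defined via an infinite sum of plethysms $\sum_r \sfh_r\circ f$ and one must justify that only finitely many terms affect a given bidegree. The cleanest way to handle this is to observe that $\sfh_r\circ f \in (q,t)^r$ for $f\in\Z\llbracket q,t\rrbracket_+$ (since $\sfh_r\circ(q^nt^k)$ has bidegree a multiple of $(n,k)$ of total ``level'' $r$, cf.\ Example~\ref{ex:A.monomial}), so the defining sum for $\Exp(f)$ is automatically $(q,t)$-adically summable, and likewise $\Exp$ restricted to $(q,t)^{m}$ lands in $1+(q,t)^m$. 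With these two topological remarks in hand, the argument above goes through without any further computation.
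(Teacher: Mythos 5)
Your proof is correct and follows essentially the same route as the paper: truncate $f$ to a finite sum, apply the multiplicativity $\Exp(f+g)=\Exp(f)\Exp(g)$ together with the monomial computation of Example~\ref{ex:A.monomial}, and pass to the limit in the $(q,t)$-adic topology. You spell out the convergence bookkeeping in more detail (e.g.\ the observation that $\sfh_r\circ f\in(q,t)^r$), but the argument is the same.
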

\begin{proof}
The equality \eqref{eq:A.prod.form} holds if it holds when truncated up to $(q, t)$-degrees $(n, k)$ for arbitrary $n,k$. In particular, we may assume that $f$ is a finite sum, in which case, \eqref{eq:A.prod.form} follows from Example~\ref{ex:A.monomial} and \eqref{eq:expmult}.
\end{proof}

Now we are ready to prove that the map $\Inv$ commutes with $\Exp$.

\begin{proposition}\label{prop:Inv.plethysm}
The following diagram commutes.
\beq\label{eq:diag.Exp.A}\xymatrix{\Lambda\llbracket  t\rrbracket _+\ar[r]^-{\Exp}\ar[d]_-{\Inv}& 1+\Lambda\llbracket t\rrbracket _+\ar[d]^-{\Inv} \\ \Z\llbracket q,t\rrbracket _+\ar[r]^-{\Exp} &1+\Z\llbracket q,t\rrbracket _+ }\eeq
In other words, for every $F\in \Lambda\llbracket t\rrbracket _+$, we have
\beq \label{eq:Inv.plethysm}\Inv(\Exp(F))=\Exp(\Inv(F)).\eeq
In particular, $\Inv(\sfh_r\circ F)=\sfh_r\circ\Inv(F)$ for every $F\in \Lambda\llbracket  t\rrbracket$ and  $r\geq 0$.
\end{proposition}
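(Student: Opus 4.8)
The plan is to reduce everything to the three defining properties of the plethystic exponential and to the fact, recorded in Remark~\ref{rem:Inv.rep}(3), that $\Inv:\Lambda\llbracket t\rrbracket\to\Z\llbracket q,t\rrbracket$ is a $\Z$-algebra homomorphism sending $\sfh_n$ to $q^n$. Since both $\Exp$ in the diagram are built from the operators $\sfh_r\circ(-)$ via $\Exp=\sum_{r\geq0}\sfh_r\circ(-)$, the whole statement will follow once we prove the last sentence, namely that $\Inv(\sfh_r\circ F)=\sfh_r\circ\Inv(F)$ for every $F\in\Lambda\llbracket t\rrbracket_+$ and $r\geq0$; summing over $r$ then gives \eqref{eq:Inv.plethysm}, and \eqref{eq:Inv.plethysm} combined with $\Exp(F+G)=\Exp(F)\Exp(G)$ (Lemma~\ref{lem:Exp.homo}), $\Exp(f+g)=\Exp(f)\Exp(g)$ (\eqref{eq:expmult}) and the homomorphism property of $\Inv$ gives the commuting square on all of $\Lambda\llbracket t\rrbracket_+$.

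First I would establish the identity $\Inv(\sfh_r\circ F)=\sfh_r\circ\Inv(F)$ when $F=\sfh_\lambda t^k$ is a single monomial in the $\sfh$-basis; this is the combinatorial heart. By the formula $\sfh_r\circ(t^kG)=t^{rk}(\sfh_r\circ G)$ recorded in \S\ref{ss:plethysm}, and since $\Inv$ multiplies by $t^{rk}$ compatibly, it suffices to handle $F=\sfh_\lambda$. Then $\Inv(\sfh_r\circ\sfh_\lambda)=1$ by Lemma~\ref{lem:Inv.plethysm}, while $\sfh_r\circ\Inv(\sfh_\lambda)=\sfh_r\circ q^n$ where $n=|\lambda|$; by Example~\ref{ex:A.monomial} (with $a=1$) we have $\sfh_r\circ q^n=(-1)^r\binom{-1}{r}q^{rn}=q^{rn}$, and in the invariant generating series the monomial $q^{rn}$ has coefficient $1$, matching. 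So the two sides agree on monomials $\sfh_\lambda t^k$.

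Next I would pass from monomials to general $F\in\Lambda\llbracket t\rrbracket_+$ by linearity together with the expansion formula for plethysm over a sum, Lemma~\ref{lem:plethysm.exp}. Writing $F=\sum_{j}F_j$ where the $F_j$ run over the monomials $d_{\lambda,k}\sfh_\lambda t^k$ appearing in the $\sfh$-expansion of $F$, Lemma~\ref{lem:plethysm.exp} expresses $\sfh_r\circ F$ as a finite sum of products $\prod_j \sfh_{r_j}\circ F_{i_j}$ over partitions of $r$ and distinct indices. Applying $\Inv$, which is a ring homomorphism, turns this into the corresponding sum of products of $\Inv(\sfh_{r_j}\circ F_{i_j})=\sfh_{r_j}\circ\Inv(F_{i_j})$ by the monomial case; but this is exactly the analogue of Lemma~\ref{lem:plethysm.exp} applied to $\sfh_r\circ\Inv(F)=\sfh_r\circ\sum_j\Inv(F_j)$ in $\Z\llbracket q,t\rrbracket$ (the same combinatorial identity holds there, being a consequence of \eqref{eq:plethysm.exp.two} which is valid in any such $\lambda$-ring). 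One subtlety: the $F_j$ need not be distinct as elements even though they carry distinct $(\lambda,k)$-labels, but one handles this exactly as in the proof of Lemma~\ref{lem:plethysm.exp}, and linearity of $\Inv$ takes care of the coefficients $d_{\lambda,k}$; a truncation argument as in \eqref{eq:trunc} reduces to the case of finitely many $F_j$, making all sums legitimate.

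The main obstacle is bookkeeping rather than any genuine difficulty: one must make sure that the combinatorial expansion of $\sfh_r\circ(-)$ over a sum is being applied consistently on both sides of the square and that the coefficients $d_{\lambda,k}$ in the $\sfh$-expansion of $F$ are tracked correctly through $\Inv$ (which, being a $\Z$-algebra map with $\Inv(\sfh_\lambda t^k)=q^{|\lambda|}t^k$, simply replaces $\sfh_\lambda$ by $q^{|\lambda|}$). Everything else is a direct appeal to Lemma~\ref{lem:Inv.plethysm}, Example~\ref{ex:A.monomial}, Lemma~\ref{lem:plethysm.exp}, and Remark~\ref{rem:Inv.rep}(3).
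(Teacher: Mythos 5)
Your overall strategy — reduce to the commutation $\Inv(\sfh_r\circ F)=\sfh_r\circ\Inv(F)$, verify it on monomials $\sfh_\lambda t^k$ via Lemma~\ref{lem:Inv.plethysm} and Example~\ref{ex:A.monomial}, then propagate via Lemma~\ref{lem:plethysm.exp} — is reasonable, and your monomial computation is correct and matches what the paper does at that step. But the passage from monomials to general $F$ has a genuine gap.

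You decompose $F=\sum_j F_j$ with $F_j=d_{\lambda,k}\sfh_\lambda t^k$, apply Lemma~\ref{lem:plethysm.exp}, and then invoke ``the monomial case'' for $\Inv(\sfh_{r_j}\circ F_{i_j})=\sfh_{r_j}\circ\Inv(F_{i_j})$. However, what you proved in the monomial step was only the coefficient-one case $F=\sfh_\lambda t^k$; it does not cover $F_j=d\sfh_\lambda t^k$ with $d\ne1$. The remark that ``linearity of $\Inv$ takes care of the coefficients'' does not close this, because the issue is not linearity of $\Inv$ but the fact that plethysm is \emph{not} linear: $\sfh_r\circ(dG)\ne d(\sfh_r\circ G)$ for $d\ne1$. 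For positive integer $d$ one can split $d\sfh_\lambda t^k$ into $d$ identical summands and run Lemma~\ref{lem:plethysm.exp} again, but negative coefficients occur in $\sfh$-expansions and your argument gives no handle on $F_j=-\sfh_\lambda t^k$; the monomial case does not apply, and $\sfh_r\circ(-G)=(-1)^r\sfe_r\circ G$ is a different plethysm that you have not analyzed. The paper sidesteps exactly this by working at the level of $\Exp$ rather than individual $\sfh_r\circ(-)$: it reduces to single $\pm\sfh_\lambda t^k$ using the multiplicativity $\Exp(F+G)=\Exp(F)\Exp(G)$ on both sides, verifies the $+\sfh_\lambda t^k$ case by summing the geometric series $\sum_r q^{rn}t^{rk}=(1-q^nt^k)^{-1}$, and then disposes of $-\sfh_\lambda t^k$ via $\Inv(\Exp(-F))=\Inv(\Exp(F))^{-1}=\Exp(\Inv(F))^{-1}=\Exp(\Inv(-F))$. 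Your proof needs an analogous step (or, alternatively, could be rebuilt on the $\sfp_n$-plethysms, which \emph{are} additive and for which linearity genuinely does carry the coefficients) to be complete.
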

\begin{proof}
By \eqref{eq:trunc} and truncation with respect to $(q,t)$-degree, we may assume that $F$ has finitely many summands of the form $\pm \sfh_\lambda t^k$ with $\lambda \vdash n$. By Remark~\ref{rem:Inv.rep} (3) and \eqref{eq:expmult}, we have
\[\begin{split}
\Inv(\Exp(F+G))&=\Inv(\Exp(F))\Inv(\Exp(G))\\
\Exp(\Inv(F+G))&=\Exp(\Inv(F))\Exp(\Inv(G)).
	\end{split}\]
Therefore, by induction on the number of summands, we may further assume $F= \pm \sfh_\lambda t^k$.

Moreover, if $F$ satisfies \eqref{eq:Inv.plethysm}, so does $-F$:
	\[\Inv(\Exp(-F))=\Inv(\Exp(F))^{-1}=\Exp(\Inv(F))^{-1}=\Exp(\Inv(-F)).\]

Hence, it suffices to show \eqref{eq:Inv.plethysm} for $F=\sfh_\lambda t^k$. In this case, $\Inv(\sfh_r\circ F)=q^{rn}t^{rk}$ for every $r$ by Lemma~\ref{lem:Inv.plethysm}. Consequently, $\Inv(\Exp(F))=\sum_{r\ge 0} q^{rn} t^{rk}=(1-q^nt^k)^{-1}$. This is equal to $\Exp(\Inv(F))=\Exp(q^nt^k)$, as computed in \eqref{eq:A.monomial}. This proves 
\eqref{eq:Inv.plethysm} and the proof for the last assertion is similar.
\end{proof}

\subsection{Multiplicities of the trivial representations}
We present a recursive algorithm to compute the multiplicities of the trivial representations by applying the map $\Inv$ to the formulas in the previous section.
\begin{definition}\label{def:fp.fq}
	Define
	\[\fp:=\Inv(P),  \quad \fq:=\Inv(Q) \and \fq^+:=\Inv(Q^+).\]
Let $\fp_n$, $\fq_n$ and $\fq_n^+\in \Z[t]$ be the coefficients of $q^n$, and let $\fp_{n,k}$, $\fq_{n,k}$ and $\fq_{n,k}^+\in \Z$ be the coefficients of $q^nt^k$ in the respective power series.
\end{definition}

Note that
\beq\label{eq:Betti.quotients}\fp_{n,k}=h^{2k}(\Mbar_{0,n}/\symS_n) \and \fq_{n,k}=h^{2k}(\Mbar_{0,n+1}/\symS_n)\eeq
are equal to the Betti numbers of the quotients $\Mbar_{0,n}/\symS_n$ and $\Mbar_{0,n+1}/\symS_n$ respectively. Thus, $\fp_n$ and $\fq_n$ are the Poincar\'e polynomials of the quotients $\Mbar_{0,n}/\symS_n$ and $\Mbar_{0,n+1}/\symS_n$ respectively.

\begin{theorem} \label{thm:main2}
	$\fq^+$ satisfies the following equivalent formulas:
	\[\begin{split}
		\mathrm{(Recursive)} \quad &\fq^+=q+\sum_{r\geq3}\left(\sum_{i=1}^{r-2}t^i\right)(\sfh_r\circ\fq^+),\\
		\mathrm{(Exponential)} \quad &\Exp(t\fq^+)=t^2\Exp(\fq^+)+(1-t)(1+t+qt).\\
	\end{split}\]
	Moreover, $\fq=\Exp(\fq^+)$.
\end{theorem}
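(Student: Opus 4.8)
The plan is to deduce Theorem~\ref{thm:main2} directly from Theorem~\ref{thm:main1} by applying the projection $\Inv$ to each of the three formulas characterizing $Q^+$ and $Q$, using Proposition~\ref{prop:Inv.plethysm} to commute $\Inv$ past the plethysm and the plethystic exponential. The point is that everything in sight lives in the image of $\Inv$: we have $\fq^+=\Inv(Q^+)$, $\fq=\Inv(Q)$ by Definition~\ref{def:fp.fq}, and the operations appearing on the right-hand sides ($\Z$-algebra multiplication, multiplication by polynomials in $t$, the operations $\sfh_r\circ(-)$, and $\Exp$) are all compatible with $\Inv$.

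First I would treat the exponential formula $Q=\Exp(Q^+)$ from \eqref{eq:Q}. Since $Q^+-\sfh_1\in\Lambda\llbracket t\rrbracket_+$ and $\Exp$ only depends on the positive-degree part up to the multiplicative constant $\Exp(\sfh_1)$, one applies $\Inv$ and invokes \eqref{eq:Inv.plethysm}: $\Inv(\Exp(Q^+))=\Exp(\Inv(Q^+))=\Exp(\fq^+)$. Together with $\Inv(Q)=\fq$ this gives $\fq=\Exp(\fq^+)$. A small point to check is that $\fq^+$ indeed lies in $\Z\llbracket q,t\rrbracket_+$ so that $\Exp(\fq^+)$ is defined — this is clear since $\fq^+=q+\cdots$ has no constant term.

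Next I would handle the recursive formula. Apply $\Inv$ to the first identity in \eqref{eq:Q+}. By Remark~\ref{rem:Inv.rep}(3), $\Inv$ is a $\Z$-algebra homomorphism sending $t\mapsto t$ and $\sfh_n\mapsto q^n$, so $\Inv(\sfh_1)=q$ and $\Inv$ commutes with multiplication by the polynomial $\sum_{i=1}^{r-2}t^i$; by the last assertion of Proposition~\ref{prop:Inv.plethysm}, $\Inv(\sfh_r\circ Q^+)=\sfh_r\circ\Inv(Q^+)=\sfh_r\circ\fq^+$, where on the right $\sfh_r\circ(-)$ is the plethysm on $\Z\llbracket q,t\rrbracket$ from \S\ref{ss:function.A}. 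Summing over $r\geq 3$ (the sum converges $(q,t)$-adically since each term $\sfh_r\circ\fq^+$ starts in $q$-degree $\geq r$) yields the recursive formula $\fq^+=q+\sum_{r\geq3}\big(\sum_{i=1}^{r-2}t^i\big)(\sfh_r\circ\fq^+)$. For the exponential form of the $\fq^+$ identity, one applies $\Inv$ to the second line of \eqref{eq:Q+}: $\Inv(\Exp(tQ^+))=\Exp(\Inv(tQ^+))=\Exp(t\fq^+)$ again by \eqref{eq:Inv.plethysm} (noting $\Inv(tQ^+)=t\fq^+$), $\Inv(t^2\Exp(Q^+))=t^2\Exp(\fq^+)$, and $\Inv((1-t)(1+t+\sfh_1 t))=(1-t)(1+t+qt)$ since $\Inv$ fixes $t$ and sends $\sfh_1\mapsto q$. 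The equivalence of the recursive and exponential forms for $\fq^+$ then follows the same way it did for $Q^+$ — either by transporting the identity \eqref{21} under $\Inv$, or by repeating that short computation directly in $\Z\llbracket q,t\rrbracket$.

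The only genuine content is Proposition~\ref{prop:Inv.plethysm}, which is already established; given it, the argument is a routine transport of structure, and the main thing to be careful about is the bookkeeping of convergence of the infinite sums and the fact that $\Exp$ on $\Lambda\llbracket t\rrbracket$ versus $\Exp$ on $\Z\llbracket q,t\rrbracket$ are genuinely intertwined by $\Inv$ (rather than some sign or normalization discrepancy creeping in). I expect no serious obstacle; the proof is essentially "apply $\Inv$ to Theorem~\ref{thm:main1} and cite Proposition~\ref{prop:Inv.plethysm}."
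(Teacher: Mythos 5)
Your proposal is correct and is exactly the paper's argument: the paper's proof consists of the single sentence that the formulas follow from Theorem~\ref{thm:main1} via Proposition~\ref{prop:Inv.plethysm}, and your write-up simply fills in the routine details (that $\Inv$ is a $\Z$-algebra homomorphism fixing $t$, sends $\sfh_1\mapsto q$, and intertwines the two plethystic exponentials). No gaps.
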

\begin{proof}
These formulas immediately follow from those in Theorem~\ref{thm:main1}, due to Proposition~\ref{prop:Inv.plethysm}.
\end{proof}

\begin{corollary}
\label{cor:main2}
	Set $\fq_1^+=1$. For $n\geq2,$ $\fq_n^+$ and $\fq_n$ satisfy the following.
    \[\begin{split}
        \fq_n^+&=
        \sum_{\lambda\vdash n}
        \left(\sum_{i=1}^{\ell(\lambda)-2}t^i\right)\prod_{j=1}^m\left(\sfh_{r_j}\circ \fq_{n_j}^+\right)
        \and
		\fq_n=
		\sum_{\lambda\vdash n}
		\prod_{j=1}^m\left(\sfh_{r_j}\circ\fq_{n_j}^+\right)
    \end{split}\]
	where $n_j$ denote the parts of $\lambda$ with multiplicities $r_j$ so that
    $\lambda=(n_1^{r_1},\cdots, n_m^{r_m})$ with $n_1>\cdots >n_m>0$
	and $\ell(\lambda)=\sum_{j=1}^mr_j$.
\end{corollary}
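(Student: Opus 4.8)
The plan is to extract Corollary~\ref{cor:main2} directly from Corollary~\ref{cor:main1} by applying the map $\Inv$ to both sides, exactly as Theorem~\ref{thm:main2} was extracted from Theorem~\ref{thm:main1}. Recall from Definition~\ref{def:fp.fq} that $\fq_n^+$ and $\fq_n$ are, by construction, the coefficients of $q^n$ in $\Inv(Q^+)$ and $\Inv(Q)$ respectively; equivalently, since $\Inv$ preserves the bigrading and $\Inv_n$ acts on the $\Lambda_n$-component, $\fq_n^+ = \sum_k \Inv_n(Q_{n,k}^+)\,t^k$ and similarly for $\fq_n$. For $n=1$ the base case $Q_1^+ = \sfh_1$ gives $\fq_1^+ = \Inv_1(\sfh_1) = 1$, matching the stated initialization.

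For $n\ge 2$, I would start from the two inductive identities in Corollary~\ref{cor:main1},
\[ Q_n^+=\sum_{\lambda\vdash n}\left(\sum_{i=1}^{\ell(\lambda)-2}t^i\right)\prod_{j=1}^m\bigl(\sfh_{r_j}\circ Q_{n_j}^+\bigr)\and Q_n=\sum_{\lambda\vdash n}\prod_{j=1}^m\bigl(\sfh_{r_j}\circ Q_{n_j}^+\bigr), \]
and apply $\Inv$ termwise. By Remark~\ref{rem:Inv.rep}(3), $\Inv$ is a $\Z$-algebra homomorphism $\Lambda\llbracket t\rrbracket\to\Z\llbracket q,t\rrbracket$ fixing $t$; hence it carries the finite sum over $\lambda$ and the finite product over $j$ on the right to the corresponding sum and product in $\Z\llbracket q,t\rrbracket$, and the polynomial factor $\sum_{i=1}^{\ell(\lambda)-2}t^i$ (which lies in $\Z[t]$) is untouched. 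It remains to identify $\Inv\bigl(\sfh_{r_j}\circ Q_{n_j}^+\bigr)$ with $\sfh_{r_j}\circ\fq_{n_j}^+$. This is precisely the content of the last assertion of Proposition~\ref{prop:Inv.plethysm}: $\Inv(\sfh_r\circ F)=\sfh_r\circ\Inv(F)$ for every $F\in\Lambda\llbracket t\rrbracket$ and $r\ge 0$, where the right-hand plethysm is the one on $\Z\llbracket q,t\rrbracket$ of \S\ref{ss:function.A}. Applying this with $F=Q_{n_j}^+$ and using $\Inv(Q_{n_j}^+)=\fq_{n_j}^+$ yields the desired formulas.

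The only point requiring a little care — and the closest thing to an obstacle — is bookkeeping of the grading: one must check that the $q^n$-coefficient of $\Inv\bigl(\prod_j \sfh_{r_j}\circ Q_{n_j}^+\bigr)$ is indeed $\prod_j \sfh_{r_j}\circ \fq_{n_j}^+$ with $n = \sum_j r_j n_j$, i.e.\ that no cross terms from other values of $n$ intrude. This follows because $Q_{n_j}^+\in \Lambda_{n_j}\llbracket t\rrbracket$ is homogeneous of degree $n_j$, so $\sfh_{r_j}\circ Q_{n_j}^+ \in \Lambda_{r_j n_j}\llbracket t\rrbracket$, the product lies in $\Lambda_{\sum_j r_j n_j}\llbracket t\rrbracket = \Lambda_n\llbracket t\rrbracket$, and $\Inv$ sends $\Lambda_n\llbracket t\rrbracket$ into $q^n\Z\llbracket t\rrbracket$; thus each $\lambda$-summand contributes only to the $q^n$-coefficient, and comparing $q^n$-coefficients on both sides gives the stated identities. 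Alternatively, and even more economically, one can bypass Corollary~\ref{cor:main1} entirely: apply $\Inv$ to the formulas of Theorem~\ref{thm:main2} (expanding $\sfh_r\circ\fq^+ = \sfh_r\circ(\sum_{n>0}\fq_n^+ q^n)$ via Lemma~\ref{lem:plethysm.exp}, which is valid on $\Z\llbracket q,t\rrbracket$ as noted in \S\ref{ss:function.A}) and read off the coefficient of $q^n$. Either route is routine once Proposition~\ref{prop:Inv.plethysm} is in hand.
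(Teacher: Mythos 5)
Your proposal follows exactly the paper's argument: apply $\Inv$ to the two formulas of Corollary~\ref{cor:main1}, using that $\Inv$ is a $\ZZ$-algebra homomorphism (Remark~\ref{rem:Inv.rep}(3)) and that it commutes with $\sfh_r\circ(-)$ (Proposition~\ref{prop:Inv.plethysm}). The grading-bookkeeping remark and the alternate route via Theorem~\ref{thm:main2} are correct extra detail but the core reasoning is identical to the paper's.
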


\begin{proof}
	These follow from the two formulas 
 in Corollary~\ref{cor:main1} respectively by applying $\Inv$ to them. Indeed, by Remark~\ref{rem:Inv.rep} (3) and Proposition~\ref{prop:Inv.plethysm},
	$\Inv_n\left(\prod_{j=1}^m(\sfh_{r_j}\circ Q_{n_j}^+)\right)=\prod_{j=1}^m(\sfh_{r_j}\circ\fq_{n_j}^+)$. 
\end{proof}

We also obtain a formula relating $\fp$ and $\fq$ by applying the map $\Inv$ to the wall-crossing formula \eqref{eq:qwc.gen} in Corollary~\ref{cor:CKL.gen}.

\begin{theorem}
\label{thm:qwc.inv}
	$\fp$ and $\fq$ satisfy the following.
	\[(1+t)\fp=(1+t+qt)\fq-\frac{1}{2}t\left(\fq^2-\fq^{[2]}\right)\]
	where $\fq^{[2]}(q,t):=\sfp_2\circ \fq = \fq(q^2,t^2)$ as in \eqref{eq:bracket.power}.
\end{theorem}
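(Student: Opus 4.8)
The plan is to apply the projection $\Inv$ to the wall-crossing formula \eqref{eq:qwc.gen} of Corollary~\ref{cor:CKL.gen}, namely
\[(1+t)P=(1+t+\sfh_1 t)Q-t\,\sfs_{(1,1)}\circ Q,\]
and track how each term transforms. Since $\Inv:\Lambda\llbracket t\rrbracket\to\Z\llbracket q,t\rrbracket$ is a $\Z$-algebra homomorphism (Remark~\ref{rem:Inv.rep}(3)) sending $t\mapsto t$ and $\sfh_n\mapsto q^n$, the left side becomes $(1+t)\fp$ and the first summand on the right becomes $(1+t+qt)\fq$, using $\Inv(\sfh_1)=q$ and $\Inv(P)=\fp$, $\Inv(Q)=\fq$ by Definition~\ref{def:fp.fq}. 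So the only nontrivial point is to compute $\Inv\big(\sfs_{(1,1)}\circ Q\big)$.

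For that term I would use the identity $\sfs_{(1,1)}=\sfh_1^2-\sfh_2$ from Example~\ref{ex:antisym}, which gives $\sfs_{(1,1)}\circ Q=Q^2-\sfh_2\circ Q$. Applying the algebra homomorphism $\Inv$ to the first piece gives $\Inv(Q^2)=\Inv(Q)^2=\fq^2$. For the second piece, $\sfh_2\circ Q$, I invoke the last assertion of Proposition~\ref{prop:Inv.plethysm}: $\Inv(\sfh_r\circ F)=\sfh_r\circ\Inv(F)$ for all $F\in\Lambda\llbracket t\rrbracket$ and $r\ge0$, where on the right $\sfh_r\circ(-)$ is the plethysm on $\Z\llbracket q,t\rrbracket$ from \S\ref{ss:function.A}. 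Hence $\Inv(\sfh_2\circ Q)=\sfh_2\circ\fq$. Now $\sfh_2=\tfrac12(\sfp_1^2+\sfp_2)$, so by the defining properties \eqref{eq:pleth} of the plethysm on $\Z\llbracket q,t\rrbracket$ we get $\sfh_2\circ\fq=\tfrac12(\fq^2+\fq^{[2]})$, where $\fq^{[2]}(q,t)=\fq(q^2,t^2)$ as in \eqref{eq:bracket.power}. Combining, $\Inv(\sfs_{(1,1)}\circ Q)=\fq^2-\tfrac12(\fq^2+\fq^{[2]})=\tfrac12(\fq^2-\fq^{[2]})$.

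Substituting everything back into the image of \eqref{eq:qwc.gen} under $\Inv$ yields
\[(1+t)\fp=(1+t+qt)\fq-\tfrac12 t\big(\fq^2-\fq^{[2]}\big),\]
which is the claimed identity. (As a sanity check on signs, one can note that $\sfs_{(1,1)}\circ F=\sum_i\sfs_{(1,1)}\circ F_i+\sum_{i<j}F_iF_j$ from \eqref{eq:antisymm} together with $\Inv$ of the antisymmetric part vanishing on each transitive summand would reproduce the same $\tfrac12(\fq^2-\fq^{[2]})$, since $\tfrac12(\fq^2-\fq^{[2]})=\sum_{n<m}\fq_n\fq_m\,q^{\bullet}+\sum_n\binom{\fq_n}{2}$-type terms match the diagonal/off-diagonal split.)

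There is no serious obstacle here: the proof is essentially bookkeeping once Proposition~\ref{prop:Inv.plethysm} is in hand. The only point requiring a moment's care is the computation $\sfh_2\circ\fq=\tfrac12(\fq^2+\fq^{[2]})$ on $\Z\llbracket q,t\rrbracket$ and, correspondingly, making sure the sign and the factor $\tfrac12$ come out right when one subtracts from $\fq^2=\Inv(Q^2)$; this is where a naive transcription of the symmetric-function identity could go wrong, so I would write out the $\sfp_\lambda$-expansion of $\sfh_2$ explicitly to pin it down.
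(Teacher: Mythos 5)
Your proof is correct and follows exactly the paper's own argument: apply the algebra homomorphism $\Inv$ to \eqref{eq:qwc.gen}, use $\sfs_{(1,1)}\circ Q=Q^2-\sfh_2\circ Q$ together with Proposition~\ref{prop:Inv.plethysm}, and compute $\sfh_2\circ\fq=\tfrac12(\fq^2+\fq^{[2]})$ from $\sfh_2=\tfrac12\sfp_{(1,1)}+\tfrac12\sfp_2$. The bookkeeping, including the factor $\tfrac12$ and the sign, matches the paper's proof.
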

\begin{proof}
	This follows from Corollary~\ref{cor:CKL.gen} by applying $\Inv(-)$ to it. Indeed,
	\[\Inv(\sfs_{(1,1)}\circ Q)=\Inv(Q)^2-\Inv(\sfh_2\circ Q)=\fq^2-\sfh_2\circ\fq=\frac{1}{2}\left(\fq^2-\fq^{[2]}\right)\]
	by Proposition~\ref{prop:Inv.plethysm}. The last equality follows from $\sfh_2= \frac12 \sfp_{(1,1)} + \frac12 \sfp_2 $.
\end{proof}

\medskip

\begin{corollary}\label{cor:qwc.inv} \cite[Corollary~6.6 (1)]{CKL}
	For $n\geq 3$,
	\beq\label{n1} (1+t)\fp_n=\fq_n-\frac{1}{2}t\left(\sum_{h=2}^{n-2}\fq_h\fq_{n-h}-\fq_{n/2}^{[2]}\right)\eeq
	where we set $\fq_{n/2}=0$ for odd $n$, and $\fq_{n/2}^{[2]}(t):=\sfp_2\circ\fq_{n/2}(t)=\fq_{n/2}(t^2)$.\footnote{There is a typo in \cite[Corollary~6.6 (1)]{CKL}, where $\fq_{n/2}$ should be $\fq_{n/2}^{[2]}$ as in \eqref{n1}. We thank Matthew Hase-Liu for bringing this to our attention.}
\end{corollary}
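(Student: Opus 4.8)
The plan is to derive the formula simply by reading off the coefficient of $q^n$ in the closed identity $(1+t)\fp=(1+t+qt)\fq-\frac{1}{2}t\left(\fq^2-\fq^{[2]}\right)$ of Theorem~\ref{thm:qwc.inv}.

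First I would fix notation: write $\fq=\sum_{m\ge 0}\fq_m q^m$ and $\fp=\sum_{m\ge 0}\fp_m q^m$ with $\fq_m,\fp_m\in\Z[t]$, so that for $m\ge 3$ (resp.\ $m\ge 2$) these $\fp_m$ (resp.\ $\fq_m$) agree with the Poincar\'e polynomials appearing in the statement. The one point that needs care is the low-order data: by \eqref{eq:gen.series.inv} we have $\fq_0=\fq_1=1$, and these are exactly the terms that will produce, and then cancel, spurious contributions when we expand $\fq^2$.

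Next, for a fixed $n\ge 3$ I would extract the $q^n$-coefficient term by term. On the left it is $(1+t)\fp_n$. On the right, the $q^n$-coefficient of $(1+t+qt)\fq$ is $(1+t)\fq_n+t\fq_{n-1}$; the $q^n$-coefficient of $\fq^2$ is $\sum_{h=0}^n\fq_h\fq_{n-h}=2\fq_n+2\fq_{n-1}+\sum_{h=2}^{n-2}\fq_h\fq_{n-h}$ using $\fq_0=\fq_1=1$; and since $\fq^{[2]}(q,t)=\fq(q^2,t^2)$ involves only even powers of $q$, the $q^n$-coefficient of $\fq^{[2]}$ is $\fq_{n/2}(t^2)=\fq_{n/2}^{[2]}(t)$ when $n$ is even and $0$ when $n$ is odd, which matches the stated convention $\fq_{n/2}=0$ for $n$ odd.

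Finally I would assemble the pieces: the $q^n$-coefficient of the right-hand side equals
\[(1+t)\fq_n+t\fq_{n-1}-\frac{1}{2}t\left(2\fq_n+2\fq_{n-1}+\sum_{h=2}^{n-2}\fq_h\fq_{n-h}-\fq_{n/2}^{[2]}\right)=\fq_n-\frac{1}{2}t\left(\sum_{h=2}^{n-2}\fq_h\fq_{n-h}-\fq_{n/2}^{[2]}\right),\]
since $(1+t)\fq_n-t\fq_n=\fq_n$ and the two $t\fq_{n-1}$ terms cancel. Equating this with $(1+t)\fp_n$ gives the claimed identity. There is no genuine obstacle here; the only thing to be careful about is keeping track of the $h=0,1$ (equivalently $h=n-1,n$) boundary terms in the expansion of $\fq^2$, whose cancellation is what makes the final formula clean.
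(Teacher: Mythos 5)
Your proposal is correct and follows exactly the paper's route: the paper's proof is precisely to take the coefficient of $q^n$ in Theorem~\ref{thm:qwc.inv} (equivalently, to apply $\Inv_n$ to \eqref{eq:CKL}), and your careful bookkeeping of the $h=0,1$ and $h=n-1,n$ boundary terms coming from $\fq_0=\fq_1=1$ is the correct verification of that coefficient extraction.
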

\begin{proof}
	Consider the coefficients of $q^n$  
	in Theorem~\ref{thm:qwc.inv}, or equivalently, apply $\Inv_n(-)$ to \eqref{eq:CKL}.
\end{proof}

\begin{remark}\label{rem:comp.inv}
	Based on Corollaries~\ref{cor:main2} and \ref{cor:qwc.inv}, we computed $\fp_n$ and $\fq_n$ for $n\leq 45$ using Mathematica. This computation took approximately 3 hours to complete for $n=45$ on a standard PC. With these results, we verified that Conjecture~\ref{conj} holds for $n\leq 45$.
\end{remark}

\bigskip

\section{Asymptotic log-concavity for the invariant part}
\label{s:asymptotic}
In this section, we prove asymptotic formuas for $\fp_n$ and $\fq_n$  which tell us that they are log-concave for sufficiently large $n$.
From the combinatorial formula for $Q_{n,k}$ in Proposition~\ref{prop:Q.wrt} and the wall-crossing formula in Theorem~\ref{thm:CKL} follow asymptotic formulas for $\fp_{n,k}$ and $\fq_{n,k}$ as functions of $n$ while keeping $k$ fixed. They exhibit polynomial growth of degree $k$, and we can explicitly calculate the leading coefficients, which allow us to verify the asymptotic log-concavity.

\subsection{Asymptotic formula for $\fq_{n,k}$}

By Proposition~\ref{prop:Q.wrt}, the $\symS_n$-module $H^{2k}(\Mbar_{0,n+1})$ is the direct sum of $U_{(T,w)}$ for each $(T,w)\in \sT_{n,k}$. Since each $U_{(T,w)}$ is a transitive permutation representation, one can immediately see that $\fq_{n,k}=\lvert \sT_{n,k}\rvert$. To study asymptotic behavior of $\fq_{n,k}$, we consider a stratification of $\sT_{n,k}$ based on the pairs of rooted trees and weight functions 
obtained by forgetting all inputs of elements in $\sT_{n,k}$.

Let $\overline{\sT}$ be the set of all pairs of rooted trees and weighted functions obtained from $\sT$
by removing all the inputs while keeping the weight function. Consider a function \[F:\sT\lra \overline{\sT}, \quad (T,w)\mapsto (F(T),w)\] which forgets all the inputs.
We denote by
\[F_{n}:~\bigsqcup_{k\geq0}\sT_{n,k}\hookrightarrow\sT\xrightarrow{~F~} \overline \sT\]
the restriction of $F$ to the set of weighted rooted trees with $n$ inputs.

Let $\overline \sT_k \subset \overline \sT$ denote the subset consisting of elements having weight $k$. In particular, $\overline\sT=\bigsqcup_{k\geq0}\overline\sT_k$ and $F_n(\sT_{n,k})\subset \overline\sT_k$.
Hence, the set $\sT_{n,k}$ admits a stratification \beq\label{eq:strat.Tnk}\sT_{n,k}=\bigsqcup_{T\in \overline{\sT}_k}F_{n}^{-1}(T).\eeq

\begin{definition}
	Let $n\geq2$ and $k\geq 0$.
	For $T\in \overline\sT_k$, define 
	\[\fq_{n,T}:=\lvert F_n^{-1}(T)\rvert.\]
	We will consider this as a function of $n$ for a given $T$.
\end{definition}
By definition, $\fq_{n,T}$ is 
the number of different ways of attaching $n$ inputs to (the vertices of) a given $T\in \overline \sT_k$ so that it becomes an element of $\sT_{n,k}$. For example, if $T\in \overline \sT_k$ has only one vertex (the root), then $\fq_{n,T}=1$ for $n\geq k+2$ and $\fq_{n,T}=0$ otherwise.

\smallskip
The stratification \eqref{eq:strat.Tnk} induces
\beq\label{eq:fq_k.into.fq_T}\fq_{n,k}=\sum_{T\in \overline{\sT}_k}\fq_{n,T}.\eeq

Let $T\in \overline \sT_k$. We denote by $\Aut(T)$ the group of automorphisms of the rooted tree $T$ without inputs. An automorphism of a rooted tree without inputs is by definition a permutation of non-root vertices sending edges to edges which preserves the weight function. If we assume that $\lvert V(T)\rvert=k+1$ so that every non-root vertex of $T$ has weight 1 and the root vertex of $T$ has weight 0, $\Aut(T)$ is a subgroup of $\symS_k$.
\begin{lemma}\label{lem:fq_T}
 Let $k\geq 0$ and $T\in \overline \sT_k$. Then, $\lvert V( T)\rvert\leq k+1$ and
	 \[\lim_{n\to \infty}\frac{\fq_{n,T}}{n^k}=\begin{cases}
	 	0 &\text{ if }~ \lvert V(T)\rvert \leq k\\
	 	\frac{1}{k!\cdot\lvert \Aut(T)\rvert} & \text{ if }~\lvert V(T)\rvert=k+1.
	 \end{cases}\]
\end{lemma}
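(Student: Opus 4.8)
The plan is to estimate $\fq_{n,T}$ by counting, directly and combinatorially, the number of ways to distribute $n$ unordered inputs among the vertices of $T$ subject to the valency constraints of Definition~\ref{def:wrt}. Let $V(T)=\{v_0,v_1,\dots,v_s\}$ with $v_0$ the root, and for each vertex $v_i$ let $d_i$ denote the number of edges of $T$ incident to $v_i$ (no inputs yet). Attaching inputs means choosing nonnegative integers $a_0,a_1,\dots,a_s$ with $\sum_i a_i=n$, where $a_i$ is the number of inputs attached to $v_i$, and then choosing which of the $n$ labelled-then-forgotten... no: inputs are unordered in $\sT_{n,k}$, so once the multiset $(a_0,\dots,a_s)$ is fixed the resulting element of $\sT_{n,k}$ is determined up to the automorphisms of $T$. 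Thus $\fq_{n,T}$ is the number of tuples $(a_0,\dots,a_s)$ with $\sum a_i=n$ satisfying $w(v_i)\le (d_i+a_i)-3$ for all $i$ (and $0\le w(v_0)$, $0<w(v_i)$ for $i\ge 1$, which are already built into $T\in\overline\sT_k$), counted modulo the action of $\Aut(T)$ permuting the coordinates corresponding to vertices it identifies.

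First I would establish the bound $|V(T)|\le k+1$: each non-root vertex has weight $\ge 1$ by Definition~\ref{def:wrt}(2), so $k=\sum_{v}w(v)\ge |V(T)|-1$. Next, I would handle the count of tuples. The valency condition gives a lower bound $a_i\ge c_i$ for each $i$ (where $c_i:=\max(0,w(v_i)+3-d_i)$), and no upper bound, so the number of valid tuples with $\sum a_i=n$ is the number of lattice points in a translated simplex, namely $\binom{n-\sum_i c_i+s}{s}$, which is a polynomial in $n$ of degree $s=|V(T)|-1$ with leading term $\frac{n^s}{s!}$. Dividing by $n^k$: if $|V(T)|\le k$, i.e. $s\le k-1<k$, the ratio $\to 0$; if $|V(T)|=k+1$, i.e. $s=k$, the ratio $\to \frac{1}{k!}$ — but then we must divide by $|\Aut(T)|$ to account for the unordered nature of the inputs, since distinct tuples related by $\Aut(T)$ give the same weighted rooted tree. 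The main point to be careful about is that when $|V(T)|=k+1$ every non-root vertex has weight exactly $1$, so $\Aut(T)$ only permutes vertices and does not interact with the weight function in a subtle way; moreover for large $n$ a ``generic'' tuple $(a_0,\dots,a_s)$ has trivial stabilizer under $\Aut(T)$, and the tuples with nontrivial stabilizer lie on finitely many hyperplanes $a_i=a_j$, hence number $O(n^{s-1})=O(n^{k-1})$. Therefore, by Burnside / a direct orbit-counting argument, $\fq_{n,T}=\frac{1}{|\Aut(T)|}\binom{n-\sum c_i+s}{s}+O(n^{k-1})$, giving $\lim_{n\to\infty}\fq_{n,T}/n^k=\frac{1}{k!\,|\Aut(T)|}$.

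The step I expect to be the main obstacle is the bookkeeping around $\Aut(T)$ and the unordered inputs: one must argue cleanly that $\fq_{n,T}$ equals the number of $\Aut(T)$-orbits of valid integer tuples, and that the correction from non-free orbits is lower order. I would organize this by noting that the set of valid tuples is an $\Aut(T)$-invariant subset of the simplex, applying the orbit-counting lemma, and bounding each fixed-point set $\{(a_i): \sigma\cdot(a_i)=(a_i)\}$ for $\sigma\ne e$ by the number of lattice points in a simplex of dimension $\le s-1$, which is $O(n^{s-1})$. Everything else is the routine fact that the number of lattice points in a dilating rational simplex is asymptotic to its normalized volume, for which I would simply cite the stars-and-bars formula $\binom{n-m+s}{s}\sim \frac{n^s}{s!}$.
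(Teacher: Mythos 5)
Your proposal is correct and follows essentially the same path as the paper's proof: bound $\lvert V(T)\rvert\le k+1$ from positivity of non-root weights, count valid input-distributions as lattice points of a translated simplex via stars and bars, and divide by $\lvert\Aut(T)\rvert$ in the leading term. Your explicit Burnside/orbit-counting step, with the $O(n^{k-1})$ bound on fixed-point sets of nontrivial $\sigma\in\Aut(T)$, is a welcome tightening of the paper's somewhat terse remark that ``the number should be divided by $\lvert\Aut(T)\rvert$.''
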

\begin{proof}
	Let $m:=\lvert V(T)\rvert$. First note that we always have $m\leq k+1$, since every non-root vertex has a positive weight.
	
By definition, $\fq_{n,T}$ is less than or equal to the number of ways to write $n$ as a sum of $m$ non-negative integers, which is equal to $\binom{n+m-1}{m-1}$. Hence, when $m\leq k$, we have
	\[0\leq \lim_{n\to \infty}\frac{\fq_{n,T}}{n^k}\leq \lim_{n\to\infty}\frac{\binom{n+m-1}{m-1}}{n^k}=\lim_{n\to\infty}\frac{n^{m-1-k}}{(m-1)!}=0.\]
	
	Now suppose that $m=k+1$. Then every non-root vertex of $T$ has weight 1 and the root vertex $v_0$ of $T$ has weight 0.
To get an element in $\sT_{n,k}$, we attach $n$ inputs to $T$. Let $a_v$ denote the number of inputs to be attached to the vertex $v\in V(T)$ such that $n =\sum_v a_v$. Then the tree obtained after attaching inputs is an element of $\sT_{n, k}$ if and only if
\beq \label{eq:cond.inputs} a_v\geq 4-val_T(v) ~\text{ for }~ v\neq v_0 \and a_{v_0}\geq 3-val_T(v_0),\eeq
because the valency of $v$ of the new tree is equal to $a_v+val_T(v)$, where $val_T(v)$ denotes the valency of $v$ in $T$. Hence, when $\Aut(T)$ is trivial, $\fq_{n,T}$ is equal to the number of ways to write $n=\sum_{v\in V(T)}a_v$ with $k+1$ non-negative integers $a_v$ satisfying \eqref{eq:cond.inputs}. Clearly, if we let
	\[n_T:=n-\sum_{v\in V(T), v\neq v_0}\max\{4-val_T(v),0\}-\max\{3-val_T(v_0),0\},\]
this number is equal to
	\[\binom{n_T+k}{k}= \frac{1}{k!}n^k+o(n^k).\]
In general, when $\Aut(T)$ is not trivial, as the vertices of $T$ are unordered, the number should be divided by $\lvert\Aut(T)\rvert$.
\end{proof}

\begin{theorem}\label{thm:asymp.q_k}
	For $k\geq 0$, we have  
	\[\fq_{n,k}=\frac{(k+1)^{k-1}}{(k!)^2}n^k+o(n^k).\]
\end{theorem}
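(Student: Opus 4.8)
The plan is to combine the stratification \eqref{eq:fq_k.into.fq_T} with the asymptotic count from Lemma~\ref{lem:fq_T}. Since $\fq_{n,k}=\sum_{T\in\overline{\sT}_k}\fq_{n,T}$ and each term satisfies $\fq_{n,T}=o(n^k)$ unless $\lvert V(T)\rvert=k+1$, the leading-order asymptotics of $\fq_{n,k}$ is governed only by those $T\in\overline{\sT}_k$ with exactly $k+1$ vertices — equivalently, those rooted trees (without inputs) on $k+1$ vertices in which the root carries weight $0$ and every non-root vertex carries weight $1$. For such $T$, Lemma~\ref{lem:fq_T} gives $\fq_{n,T}/n^k\to \frac{1}{k!\,\lvert\Aut(T)\rvert}$. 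Summing over the finitely many such $T$ and using that the number of summands is fixed (independent of $n$), I would conclude
\[
\lim_{n\to\infty}\frac{\fq_{n,k}}{n^k}=\frac{1}{k!}\sum_{T}\frac{1}{\lvert\Aut(T)\rvert},
\]
where the sum runs over isomorphism classes of rooted trees on $k+1$ vertices with the weight function described above.

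The remaining task is the combinatorial identity $\sum_T \frac{1}{\lvert\Aut(T)\rvert}=\frac{(k+1)^{k-1}}{k!}$, where $T$ ranges over isomorphism classes of \emph{rooted} trees on $k+1$ vertices (the root distinguished, the other $k$ vertices unlabeled). This is exactly an orbit-counting statement: the set of $\emph{labeled}$ rooted trees on vertex set $\{0,1,\dots,k\}$ with root $0$ carries a $\symS_k$-action permuting the labels $\{1,\dots,k\}$, the orbits are the isomorphism classes $T$, and the stabilizer of a labeled representative of $T$ is $\Aut(T)$. By the orbit–stabilizer theorem, the orbit of $T$ has size $k!/\lvert\Aut(T)\rvert$, so $\sum_T \frac{k!}{\lvert\Aut(T)\rvert}$ equals the total number of labeled rooted trees on $k+1$ labeled vertices with a prescribed root, which is $(k+1)^{k-1}$ by Cayley's formula (the number of labeled trees on $k+1$ vertices is $(k+1)^{k-1}$, and fixing which vertex is the root does not change the count since every tree becomes rooted in exactly one way once the root vertex is specified). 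Dividing by $k!$ gives the claim, and hence $\fq_{n,k}=\frac{(k+1)^{k-1}}{(k!)^2}n^k+o(n^k)$.

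I would organize the write-up in three short steps: (i) invoke \eqref{eq:fq_k.into.fq_T} and Lemma~\ref{lem:fq_T} to reduce to $T$ with $k+1$ vertices and extract the sum $\frac{1}{k!}\sum_T \lvert\Aut(T)\rvert^{-1}$; (ii) set up the $\symS_k$-action on labeled rooted trees on $k+1$ vertices and apply orbit–stabilizer to rewrite the sum as $\frac{1}{k!\cdot k!}$ times the number of such labeled trees; (iii) apply Cayley's formula to evaluate that number as $(k+1)^{k-1}$.

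The main obstacle is purely bookkeeping rather than conceptual: one must be careful that the rooted trees appearing in $\overline{\sT}_k$ with $k+1$ vertices are precisely counted by Cayley's formula, i.e.\ that the valency conditions in Definition~\ref{def:wrt} impose no further constraint once every non-root vertex has weight $1$ (indeed, weight $1$ at a non-root vertex $v$ forces only $val(v)\ge 4$ \emph{after} inputs are attached, which is always arrangeable by Lemma~\ref{lem:fq_T} and imposes nothing on the bare tree $T$), and that weight $0$ at the root is likewise unconstrained. Once this is checked, the identification of $\Aut(T)$ with the stabilizer under the $\symS_k$-action and the application of Cayley's formula are routine.
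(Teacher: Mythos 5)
Your proposal is correct and follows essentially the same approach as the paper: reduce via the stratification \eqref{eq:fq_k.into.fq_T} and Lemma~\ref{lem:fq_T} to the sum of $1/\lvert\Aut(T)\rvert$ over rooted trees $T$ on $k+1$ vertices, then apply the orbit--stabilizer theorem for the $\symS_k$-action on labeled trees together with Cayley's formula. The paper's own proof is a slightly more compressed version of exactly this argument.
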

\begin{proof}
By Lemma~\ref{lem:fq_T} and \eqref{eq:fq_k.into.fq_T}, $\lim_{n\to \infty}\fq_{n,k}/(n^k/k!)$ is equal to the sum of $1/\lvert \Aut(T)\rvert$ over rooted trees $T\in \overline \sT_k$ with $k+1$ vertices. We claim that this sum multiplied by $k!$ is equal to the number of trees on $k+1$ labeled vertices, which is ${(k+1)^{k-1}}$, known as Cayley's tree formula.

We label non-root vertices of $T$ by integers from $1$ to $k$, while keeping the root labeled as $k+1$, so that we get a tree on $k+1$ labeled vertices. For each $T$, there are $k!$ ways to do that. Let the group $\mathbb{S}_k$ act on this labeled tree by permuting the labels. Then $\Aut(T)$ is exactly the stabilizing subgroup of the labeled tree. Clearly, this process is reversible and hence we have
\[\sum_{T\in \overline{\sT}_k}\frac{k!}{\lvert \Aut(T)\rvert} ={(k+1)^{k-1}}, \]
as required.
\end{proof}

\begin{corollary}\label{cor:asymp.q_k}
	The coefficients of the Poincar\'e polynomial of $\Mbar_{0,n+1}/\symS_n$ are asymptotically strictly log-concave, in the sense that for each $k\geq 1$,
	\[\fq_{n,k}^2>\fq_{n,k-1}\fq_{n,k+1}\]
	for any sufficiently large $n$.
\end{corollary}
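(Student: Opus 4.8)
The plan is to deduce the asymptotic log-concavity directly from the leading-term asymptotics in Theorem~\ref{thm:asymp.q_k}. First I would substitute the asymptotic formula $\fq_{n,k}=\frac{(k+1)^{k-1}}{(k!)^2}n^k+o(n^k)$ into both sides of the desired inequality $\fq_{n,k}^2>\fq_{n,k-1}\fq_{n,k+1}$. Both sides are polynomials in $n$ of degree $2k$; the leading coefficient of $\fq_{n,k}^2$ is $\left(\frac{(k+1)^{k-1}}{(k!)^2}\right)^2$ and the leading coefficient of $\fq_{n,k-1}\fq_{n,k+1}$ is $\frac{k^{k-2}}{((k-1)!)^2}\cdot\frac{(k+2)^{k}}{((k+1)!)^2}$. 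Hence the inequality holds for all sufficiently large $n$ provided the strict numerical inequality
\[
\left(\frac{(k+1)^{k-1}}{(k!)^2}\right)^2 \;>\; \frac{k^{k-2}}{((k-1)!)^2}\cdot\frac{(k+2)^{k}}{((k+1)!)^2}
\]
holds for every $k\geq 1$ (and for $k=1$ one checks the degenerate cases separately, since $\fq_{n,0}=1$).

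The next step is to clear factorials. Writing $k!=k\cdot(k-1)!$ and $(k+1)!=(k+1)\cdot k!$, the inequality reduces, after cancelling the common factor $\frac{1}{((k-1)!)^2(k!)^2}$ appropriately, to a purely elementary inequality between powers of $k$, $k+1$, and $k+2$. A short computation shows it is equivalent to
\[
(k+1)^{2k}\;>\;k^{k-1}(k+2)^{k+1},
\]
i.e.\ to $\left(\frac{k+1}{k}\right)^{k-1}>\left(\frac{k+2}{k+1}\right)^{k+1}$, which is just the statement that the sequence $\left(1+\tfrac1m\right)^m$ is increasing (applied with $m=k$ and $m=k+1$, together with an extra positive factor). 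This is a standard fact; alternatively one can take logarithms and use the strict concavity of $x\mapsto\log(1+x)$, or the strict log-concavity of the sequence $m\mapsto\left(1+\frac1m\right)^m$.

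I would organize the write-up as: (i) reduce to the leading-coefficient comparison via Theorem~\ref{thm:asymp.q_k}, noting that a strict inequality between the top coefficients of two degree-$2k$ polynomials forces the inequality for all large $n$; (ii) perform the factorial cancellation to arrive at $(k+1)^{2k}>k^{k-1}(k+2)^{k+1}$; (iii) verify this elementary inequality using monotonicity of $(1+1/m)^m$. The main obstacle — and it is a minor one — is bookkeeping the factorials correctly and handling the small cases $k=1$ (where $\fq_{n,0}=1$ is constant, so $\fq_{n,1}^2>\fq_{n,0}\fq_{n,2}$ still follows from the degree comparison since $\deg\fq_{n,1}^2=2>1=\deg\fq_{n,2}$) so that the statement is genuinely valid for \emph{every} $k\geq1$ and not just $k$ large. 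No deeper input beyond Theorem~\ref{thm:asymp.q_k} is needed.
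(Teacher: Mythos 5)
Your overall strategy is exactly the paper's: substitute the asymptotics from Theorem~\ref{thm:asymp.q_k}, note both sides of $\fq_{n,k}^2>\fq_{n,k-1}\fq_{n,k+1}$ grow like $n^{2k}$, and compare leading coefficients. However, your factorial bookkeeping has a concrete error, and the inequality you arrive at is in fact \emph{false}. Carrying out the cancellation carefully: using $((k-1)!)^2=(k!)^2/k^2$ and $((k+1)!)^2=(k+1)^2(k!)^2$, the ratio of leading coefficients is
\[
\frac{\left(\frac{(k+1)^{k-1}}{(k!)^2}\right)^2}{\frac{k^{k-2}}{((k-1)!)^2}\cdot\frac{(k+2)^{k}}{((k+1)!)^2}}
=\frac{(k+1)^{2k-2}\cdot\frac{(k!)^2}{k^2}\cdot(k+1)^2(k!)^2}{(k!)^4\,k^{k-2}(k+2)^k}
=\frac{(k+1)^{2k}}{k^k(k+2)^k}
=\left(1+\frac{1}{k^2+2k}\right)^{k},
\]
which is the paper's expression. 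The inequality you must verify is therefore $(k+1)^{2k}>k^k(k+2)^k$, i.e.\ $(k^2+2k+1)^k>(k^2+2k)^k$, which is trivial — no monotonicity of $(1+1/m)^m$ is needed. Your version $(k+1)^{2k}>k^{k-1}(k+2)^{k+1}$ is strictly stronger than the correct one (multiply by $k/(k+2)<1$) and actually fails already at $k=1$, where it reads $4>9$. Relatedly, your special-case treatment of $k=1$ also contains an error: $\fq_{n,2}$ grows like $\tfrac{3}{4}n^2$, so $\deg\fq_{n,0}\fq_{n,2}=2$, not $1$; there is no degree mismatch. The case $k=1$ is covered uniformly by the correct leading-coefficient comparison ($4>3$), so no separate argument is needed. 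The paper packages the conclusion slightly differently, bounding the limit $\left(1+\frac{1}{k^2+2k}\right)^k\geq 1+\frac{1}{k+2}>1$ via Bernoulli's inequality, but once the ratio is computed correctly, any of these finishing touches works.
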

\begin{proof}
	By Theorem~\ref{thm:asymp.q_k}, we have
	\[\lim_{n\to \infty}\frac{\fq_{n,k}^2}{\fq_{n,k-1}\fq_{n,k+1}}= \left(1+\frac{1}{k^2+2k}\right)^k\geq 1+\frac{1}{k+2}>1\]
	since $(1+x)^k\geq 1+kx$ for any $x\geq 0$.
\end{proof}

\begin{remark}\label{rem:c.exp}
	Let $c_k:=\frac{(k+1)^{k-1}}{k!}$. 
	Then its generating series
	\[\fc:=\sum_{k\geq0}c_kt^k=\sum_{k\geq0}\frac{(k+1)^{k-1}}{k!}t^k\]
	satisfies $\fc=\exp(t\fc)$. See \cite[Example 4i]{BEW}.
\end{remark}

\begin{remark}\label{rem:ACM}
	Our approach also allows us to recover \cite[Theorem~1.3]{ACM}:
	\[\lim_{n\to\infty}\frac{h^{2k}(\Mbar_{0,n+1})}{(k+1)^n}=c_k.\]
	Indeed, by arguments similar to those used in the proof of Lemma~\ref{lem:fq_T}, we find that for $T\in \bar \sT_k$,
	\beq\label{eq:nonequiv.limit}\lim_{n\to \infty}\frac{\sum_{T'\in F_n^{-1}(T)}\dim U_{T'}}{(k+1)^n}=\begin{cases}
		0 &\text{if }\lvert V(T)\rvert \leq k\\
		\frac{1}{\lvert\Aut(T)\rvert}&\text{if }\lvert V(T)\rvert=k+1.
	\end{cases}\eeq
	Hence, the sum of the left-hand side over all $T\in \bar \sT_k$ is equal to $c_k$.
\end{remark}

\subsection{Asymptotic formula for $\fp_{n,k}$}
Recall that
$$\fp_{n,k}=\Inv(P_{n,k})=h^{2k}(\Mbar_{0,n}/\symS_n).$$ We find an asymptotic formula for $\fp_{n,k}.$
We begin with explicit computation of $\fp_{n,k}$ with $k\leq 2$. 

\begin{example}\label{ex:fp_k.lowdeg}
For $0\leq k\leq 2$, the numbers $\fp_{n,k}$ can be explicitly computed using the formula for $P_{n,k}$ in \cite[Corollary 6.2]{CKL}. In particular,
	\[\begin{split}
		\fp_{n,0}&=1,\qquad \\
		\fp_{n,1}&=\left\lfloor\frac{n-2}{2}\right\rfloor=:m,\quad \text{ and}\\
		\fp_{n,2}&=\left\lfloor\frac{n-3}{2}\right\rfloor+\frac{1}{2}\left(\binom{n-4}{2}+\left\lfloor\frac{n-4}{2}\right\rfloor\right)=\begin{cases}
			m(m-1) &\text{ for }n \text{ even}\\
			m^2 & \text{ for } n \text{ odd},
		\end{cases}
	\end{split}
	\]
	where $m:=\lfloor\frac{n-2}{2}\rfloor$. Note that
	$\fp_{n,1}= \frac{n}{2}+o(n)$ and $\fp_{n,2}= \frac{n^2}{4}+o(n^2)$.
\end{example}

Now we prove that $\fp_{n,k}$ has polynomial growth of degree $k$ as $n$ increases, and compute its leading coefficient.
Write $c_k=\frac{(k+1)^{k-1}}{k!}$ as in Remark~\ref{rem:c.exp}, so that $\fq_{n,k}=\frac{c_k}{k!}n^k+o(n^k)$.

\begin{lemma}\label{lem:qwc.d_k}
	Let $k\geq 0$ and define
\[d_k :=c_k-\frac{1}{2}\sum_{j=0}^{k-1}c_jc_{k-1-j}.\]
Then we have
\[\fp_{n,k}=\frac{d_k}{k!}n^k+o(n^k).\]
\end{lemma}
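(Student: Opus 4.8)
The plan is to extract the asymptotic formula for $\fp_{n,k}$ from the wall-crossing identity in Corollary~\ref{cor:qwc.inv}, using the known asymptotics for $\fq_{n,k}$ from Theorem~\ref{thm:asymp.q_k}. Recall that Corollary~\ref{cor:qwc.inv} gives
\[
(1+t)\fp_n = \fq_n - \frac{1}{2}t\left(\sum_{h=2}^{n-2}\fq_h\fq_{n-h} - \fq_{n/2}^{[2]}\right),
\]
so comparing coefficients of $t^k$ on both sides yields
\[
\fp_{n,k} + \fp_{n,k-1} = \fq_{n,k} - \frac{1}{2}\sum_{h=2}^{n-2}\sum_{j=0}^{k-1}\fq_{h,j}\,\fq_{n-h,k-1-j} + \frac{1}{2}\,[\text{contribution of }\fq_{n/2}^{[2]}],
\]
where the last term is present only when $n$ is even and contributes to the coefficient of $t^k$ only from $\fq_{n/2,(k-1)/2}$ when $k$ is odd; in any case this term is bounded by $O(n^{(k-1)/2}) = o(n^k)$ and is negligible.

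The main step is to analyze the double sum $\sum_{h=2}^{n-2}\sum_{j=0}^{k-1}\fq_{h,j}\,\fq_{n-h,k-1-j}$ asymptotically in $n$. For fixed $j$, we have $\fq_{h,j} = \frac{c_j}{j!}h^j + o(h^j)$ and $\fq_{n-h,k-1-j} = \frac{c_{k-1-j}}{(k-1-j)!}(n-h)^{k-1-j} + o((n-h)^{k-1-j})$, so each term behaves like a product of polynomials in $h$ and $n-h$. Summing over $h$ from $2$ to $n-2$ and using the standard estimate $\sum_{h=0}^{n}h^a(n-h)^b = \frac{a!\,b!}{(a+b+1)!}n^{a+b+1} + o(n^{a+b+1})$ (Euler beta-integral/Faulhaber asymptotics), the $(j, k-1-j)$-term of the double sum contributes $\frac{c_j\,c_{k-1-j}}{j!\,(k-1-j)!}\cdot\frac{j!\,(k-1-j)!}{k!}\,n^k + o(n^k) = \frac{c_j c_{k-1-j}}{k!}n^k + o(n^k)$. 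Summing over $j = 0, \ldots, k-1$ gives $\frac{1}{k!}\left(\sum_{j=0}^{k-1}c_j c_{k-1-j}\right)n^k + o(n^k)$. Combined with $\fq_{n,k} = \frac{c_k}{k!}n^k + o(n^k)$, the right-hand side of the coefficient identity is $\frac{1}{k!}\left(c_k - \frac{1}{2}\sum_{j=0}^{k-1}c_j c_{k-1-j}\right)n^k + o(n^k) = \frac{d_k}{k!}n^k + o(n^k)$.

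Finally, I would conclude by induction on $k$: the identity reads $\fp_{n,k} = \frac{d_k}{k!}n^k + o(n^k) - \fp_{n,k-1}$, and by the inductive hypothesis $\fp_{n,k-1} = \frac{d_{k-1}}{(k-1)!}n^{k-1} + o(n^{k-1}) = o(n^k)$, so $\fp_{n,k} = \frac{d_k}{k!}n^k + o(n^k)$, with the base case $\fp_{n,0} = 1$ (consistent with $d_0 = c_0 = 1$). The routine-but-nontrivial part I expect to require care is the uniformity in the double-sum estimate: one must check that the $o(\cdot)$ error terms, summed over the $n-3$ values of $h$, still total $o(n^k)$ — this follows because each individual $\fq_{h,j}$ is bounded by a polynomial in $h$ of degree exactly $j$ with controlled constants (indeed $\fq_{h,j} = \lvert\sT_{h,j}\rvert \le \binom{h+k}{k}$-type bounds from Lemma~\ref{lem:fq_T}), so the error is genuinely lower-order after summation. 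Handling the even-$n$ correction term $\fq_{n/2}^{[2]}$ and the truncation of the sum to $h \ge 2$ (versus $h \ge 0$) only affects finitely many boundary terms of lower order and is harmless.
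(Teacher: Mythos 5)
Your proof is correct and follows essentially the same route as the paper: apply $\Inv$ to the wall-crossing identity (Corollary~\ref{cor:qwc.inv}), compare coefficients of $t^k$, plug in the asymptotics $\fq_{h,j}=\frac{c_j}{j!}h^j+o(h^j)$ from Theorem~\ref{thm:asymp.q_k}, evaluate the double sum via $\sum_h h^a(n-h)^b \sim \frac{a!\,b!}{(a+b+1)!}n^{a+b+1}$, and finish by induction on $k$. Your explicit check that the error terms summed over $h$ remain $o(n^k)$, and that the $\fq_{n/2}^{[2]}$ correction is negligible, is a sound and slightly more careful version of the limit exchange that the paper carries out implicitly.
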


\begin{proof}
Taking the degree $k$ parts of both sides of the formula in Corollary~\ref{cor:qwc.inv}, dividing by $n^k$ and taking their limits as $n\to \infty$,
we obtain
	\[\lim_{n\to\infty }\frac{\fp_{n,k}+\fp_{n,k-1}}{n^k}=\frac{c_k}{k!}-\frac{1}{2}\sum_{j=0}^{k-1}\frac{c_jc_{k-1-j}}{j!(k-1-j)!}\cdot\lim_{n\to\infty }\sum_{h=2}^{n-2}\frac{h^j(n-h)^{k-1-j}}{n^k}.\]
Since
	\[\lim_{n\to\infty}\sum_{h=2}^{n-2}h^a(n-h)^b\frac{1}{n^{k}}= \int_0^1x^a(1-x)^bdx=\frac{a!b!}{(a+b+1)!}\]
	for integers $a,b\geq0$,
	\[\lim_{n\to \infty}\frac{\fp_{n,k}+\fp_{n,k-1}}{n^k}=\frac{c_k}{k!}- \frac{1}{2}\sum_{j=0}^{k-1}\frac{c_jc_{k-1-j}}{k!}=\frac{d_k}{k!}.  \] 
	Now the assertion follows by an induction on $k$.
\end{proof}

Similarly as in $c_k$, the number $d_k$ can be explicitly calculated. 

\begin{theorem}\label{thm:asymp.p_k}
For $k\geq 0$, we have
\[ d_k=\frac{(k+1)^{k-2}}{k!}.\] In particular,
\[\fp_{n,k}= \frac{(k+1)^{k-2}}{(k!)^2} n^k+o(n^k).\]
\end{theorem}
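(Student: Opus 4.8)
The plan is to encode the recursion for $d_k$ as a single identity of generating series and then exploit the functional equation for $\fc$ recorded in Remark~\ref{rem:c.exp}. Writing $\fc=\sum_{k\ge 0}c_kt^k$ and $\fd=\sum_{k\ge 0}d_kt^k$, the sum $\sum_{j=0}^{k-1}c_jc_{k-1-j}$ is precisely the coefficient of $t^{k-1}$ in $\fc^{2}$, so the defining relation $d_k=c_k-\tfrac12\sum_{j=0}^{k-1}c_jc_{k-1-j}$ is equivalent to
\[\fd=\fc-\tfrac{t}{2}\,\fc^{2}.\]
Once I show that the right-hand side equals $\sum_{k\ge 0}\tfrac{(k+1)^{k-2}}{k!}t^k$, the formula $d_k=\tfrac{(k+1)^{k-2}}{k!}$ follows, and the asymptotic statement for $\fp_{n,k}$ is then immediate from Lemma~\ref{lem:qwc.d_k}, since $\fp_{n,k}=\tfrac{d_k}{k!}n^k+o(n^k)$.

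The computation I would carry out is as follows. Put $g:=t\,\fc$; since $c_0=1$, the series $g$ lies in the maximal ideal $t\,\QQ\llbracket t\rrbracket$, so $\exp(g)$, $g'$ and formal integration all make sense. By Remark~\ref{rem:c.exp}, $\fc=\exp(t\fc)=\exp(g)$, hence $g=t\exp(g)$ (i.e.\ $g$ is the classical tree function). Substituting $\fc=g/t$ into the displayed identity gives $t\,\fd=g-\tfrac12 g^{2}$. Differentiating the functional equation $g=t\exp(g)$ yields $g'=\exp(g)+g\,g'$, so $g'(1-g)=\exp(g)$, and therefore
\[(t\,\fd)'=g'-g\,g'=g'(1-g)=\exp(g)=\fc.\]
As $t\,\fd$ has vanishing constant term, integrating term by term gives $t\,\fd=\sum_{j\ge 0}\tfrac{c_j}{j+1}t^{j+1}$, that is $d_k=\tfrac{c_k}{k+1}=\tfrac{(k+1)^{k-2}}{k!}$, which is what we want.

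I do not expect a genuine obstacle here: once the functional equation is in hand this is essentially a one-line generating-function manipulation. The only points requiring care are the index bookkeeping that turns the convolution $\sum_{j=0}^{k-1}c_jc_{k-1-j}$ into $t\fc^{2}$, checking that $g$ really lies in the maximal ideal of $\QQ\llbracket t\rrbracket$ so that $\exp(g)$, $g'$ and the integration are legitimate operations on formal power series, and the constant-term verification that licenses the final integration. Should one prefer to avoid differentiating the functional equation, the same coefficient $[t^{k+1}]\!\big(g-\tfrac12 g^{2}\big)$ can be extracted via Lagrange inversion applied to $g=t\exp(g)$ with $H(u)=u-\tfrac12u^{2}$, giving $[t^{m}]\big(g-\tfrac12 g^{2}\big)=\tfrac1m[u^{m-1}]\big((1-u)e^{mu}\big)=\tfrac{m^{m-2}}{m!}$; reading off $m=k+1$ again yields $d_k=\tfrac{(k+1)^{k-2}}{k!}$.
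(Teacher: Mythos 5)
Your argument is correct and is essentially the paper's proof: both reduce the claim to the identity $(t\fd)'=\fc$ via $\fd=\fc-\tfrac{t}{2}\fc^2$ and then differentiate the functional equation $\fc=\exp(t\fc)$ from Remark~\ref{rem:c.exp}; your substitution $g=t\fc$ (the tree function) is only a cosmetic repackaging of the same computation. The Lagrange-inversion alternative you sketch at the end is a valid independent check but not needed.
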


\begin{proof}
    Let $\fc:=\sum_{k\geq0}c_kt^k$ and  $\fd:=\sum_{k\geq0}d_kt^k$. Then the assertion is equivalent to the identity $(t\fd)'=\fc$. By definition of $d_k$, we have $\fd = \fc - \frac{1}{2}t\fc^2$. By differentiating both sides of the identity $\exp(t\fc)=\fc$ discussed in Remark~\ref{rem:c.exp}, we obtain $\fc' = \fc^2 +t\fc\fc'$. Hence
    \[
        (t\fd)' = (t\fc - \frac{1}{2}t^2\fc^2)'=\fc +t\fc' -t\fc^2 -t^2 \fc\fc' =\fc,
    \]
    as required.
\end{proof}

\begin{corollary}\label{cor:asymp.p_k}
	The coefficients of the Poincar\'e polynomial of $\Mbar_{0,n}/\symS_n$ are asymptotically log-concave, in the sense that for each $k\geq 1$,
	\[\fp_{n,k}^2\geq \fp_{n,k-1}\fp_{n,k+1}\]
	for any sufficiently large $n$.
	Moreover, when $k>1$, the inequality is always strict as long as $n$ is sufficiently large. When $k=1$, the inequality holds for all $n$, and it is strict if and only if $n$ is even.
\end{corollary}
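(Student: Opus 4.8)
The plan is to split the argument into the two ranges $k\ge 2$ and $k=1$, because the leading-order asymptotics of Theorem~\ref{thm:asymp.p_k} yield a ratio tending to a constant strictly greater than $1$ precisely when $k\ge 2$, whereas for $k=1$ that limiting constant degenerates to $1$ and the statement must be extracted from the explicit small-degree values of Example~\ref{ex:fp_k.lowdeg}. This $k=1$ case is the only delicate point, and it is also where the parity-dependence of strictness originates.

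First, for $k\ge 2$, I would substitute $\fp_{n,j}=\frac{(j+1)^{j-2}}{(j!)^2}\,n^j+o(n^j)$ for $j=k-1,k,k+1$ (all indices are $\ge 0$, so Theorem~\ref{thm:asymp.p_k} applies) and compute
\[
\lim_{n\to\infty}\frac{\fp_{n,k}^2}{\fp_{n,k-1}\,\fp_{n,k+1}}
=\frac{\bigl((k+1)^{k-2}/(k!)^2\bigr)^2}{\dfrac{k^{k-3}}{((k-1)!)^2}\cdot\dfrac{(k+2)^{k-1}}{((k+1)!)^2}}.
\]
Using $((k-1)!)^2=(k!)^2/k^2$ and $((k+1)!)^2=(k+1)^2(k!)^2$, this collapses to
\[
\frac{(k+1)^{2k-2}}{k^{k-1}(k+2)^{k-1}}
=\left(\frac{(k+1)^2}{k(k+2)}\right)^{k-1}
=\left(1+\frac{1}{k(k+2)}\right)^{k-1},
\]
which is strictly greater than $1$ for every $k\ge 2$. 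Hence $\fp_{n,k}^2>\fp_{n,k-1}\fp_{n,k+1}$ for all sufficiently large $n$, giving both the asserted log-concavity and its strict form when $k>1$. (This parallels the computation in Corollary~\ref{cor:asymp.q_k}, the only difference being the exponent $k-1$ here in place of $k$ there.)

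Next, for $k=1$, I would invoke Example~\ref{ex:fp_k.lowdeg}: with $m:=\lfloor(n-2)/2\rfloor$ one has $\fp_{n,0}=1$, $\fp_{n,1}=m$, and $\fp_{n,2}=m(m-1)$ for $n$ even while $\fp_{n,2}=m^2$ for $n$ odd. Therefore
\[
\fp_{n,1}^2-\fp_{n,0}\,\fp_{n,2}=m^2-\fp_{n,2}=
\begin{cases} m,& n\text{ even},\\ 0,& n\text{ odd}.\end{cases}
\]
Since $\Mbar_{0,n}/\symS_n$ is considered for $n\ge 3$, every even $n$ occurring here satisfies $n\ge 4$ and hence $m=(n-2)/2\ge 1$. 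Thus $\fp_{n,1}^2\ge\fp_{n,0}\fp_{n,2}$ holds for every $n$, and it is strict exactly when $n$ is even. Combining this with the $k\ge 2$ case completes the proof.

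The only genuine obstacle is the boundary index $k=1$: there the ratio of leading terms is exactly $1$, so Theorem~\ref{thm:asymp.p_k} alone is inconclusive, and one must use the closed forms of $\fp_{n,1}$ and $\fp_{n,2}$ — fortunately available from Example~\ref{ex:fp_k.lowdeg} — to establish both the non-strict inequality valid for all $n$ and the precise parity criterion for strictness.
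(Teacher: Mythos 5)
Your proposal is correct and follows the same route as the paper: for $k>1$ it computes the limit of $\fp_{n,k}^2/(\fp_{n,k-1}\fp_{n,k+1})$ from Theorem~\ref{thm:asymp.p_k}, obtaining $\left(1+\frac{1}{k^2+2k}\right)^{k-1}>1$, and for $k=1$ it falls back on the explicit formulas of Example~\ref{ex:fp_k.lowdeg} to get the parity criterion. Your write-up merely makes explicit the algebra and the $k=1$ case that the paper leaves to the reader.
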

\begin{proof}
	First assume $k>1$.  By Theorem~\ref{thm:asymp.p_k}, we have
	\[\lim_{n\to\infty}\frac{\fp_{n,k}^2}{\fp_{n,k-1}\fp_{n,k+1}}= \left( 1+\frac{1}{k^2+2k}\right)^{k-1}\geq 1+\frac{k-1}{k^2+2k}> 1.\]
The assertion for $k=1$ follows from the explicit formulas for $\fp_k$ for $0\leq k\leq2$, stated in Example~\ref{ex:fp_k.lowdeg}.
\end{proof}

\subsection{Failure of ultra-log-concavity}\label{ss:ultra.lc}
	We say that a  sequence $a_0,\cdots, a_n$ of integers is \emph{ultra-log-concave} if the sequence $\left(a_i/\binom{n}{i}\right)_{0\leq i\leq n}$ is log-concave.

	In general, the coefficients of the Poincar\'e polynomials $\fp_n$ and $\fq_n$ of $\Mbar_{0,n}/\symS_n$ and $\Mbar_{0,n+1}/\symS_n$ do not satisfy the ultra-log-concavity: If  
	we let
	\[\tfp_{n,k}:=\frac{\fp_{n,k}}{\binom{n-3}{k}} \and \tfq_{n,k}:=\frac{\fq_{n,k}}{\binom{n-2}{k}},\]
	then the following holds.
\begin{corollary}\label{cor:ultra.lc}
	Let $k>0$. For any sufficiently large $n$, 
	\[\tfp_{n,k}^2<\tfp_{n,k-1}\tfp_{n,k+1} \and \tfq_{n,k}^2<\tfq_{n,k-1}\tfq_{n,k+1}.\]
\end{corollary}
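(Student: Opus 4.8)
The plan is to deduce the failure of ultra-log-concavity by passing to the limit $n\to\infty$, using the asymptotic formulas of Theorems~\ref{thm:asymp.q_k} and~\ref{thm:asymp.p_k}. For fixed $k$, both $\binom{n-2}{k}$ and $\binom{n-3}{k}$ are polynomials in $n$ of degree $k$ with leading coefficient $\tfrac{1}{k!}$, i.e. of the form $\tfrac{1}{k!}n^k+o(n^k)$; combining this with $\fq_{n,k}=\tfrac{c_k}{k!}n^k+o(n^k)$ and $\fp_{n,k}=\tfrac{d_k}{k!}n^k+o(n^k)$, where $c_k=\tfrac{(k+1)^{k-1}}{k!}$ and $d_k=\tfrac{(k+1)^{k-2}}{k!}$ (Remark~\ref{rem:c.exp}, Theorem~\ref{thm:asymp.p_k}), one gets
\[\lim_{n\to\infty}\tfq_{n,k}=c_k>0\and\lim_{n\to\infty}\tfp_{n,k}=d_k>0\]
for every fixed $k$. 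Hence $\tfq_{n,k}^2\to c_k^2$ and $\tfq_{n,k-1}\tfq_{n,k+1}\to c_{k-1}c_{k+1}$, and likewise for $\tfp$, so it suffices to prove the \emph{strict} inequalities $c_k^2<c_{k-1}c_{k+1}$ and $d_k^2<d_{k-1}d_{k+1}$ for all $k\ge 1$; that is, that $(c_k)$ and $(d_k)$ are strictly log-convex. Here $c_0=d_0=1$, consistently with the closed forms read at $k=0$.

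To establish the log-convexity I would substitute the closed forms and simplify. A short manipulation of factorials and powers yields
\[\frac{c_{k-1}c_{k+1}}{c_k^2}=\frac{k+1}{k}\Bigl(1-\frac{1}{(k+1)^2}\Bigr)^{k}\and\frac{d_{k-1}d_{k+1}}{d_k^2}=\frac{k+1}{k}\Bigl(1-\frac{1}{(k+1)^2}\Bigr)^{k-1}.\]
Applying Bernoulli's inequality $(1-x)^m\ge 1-mx$ with $x=\tfrac{1}{(k+1)^2}$ and $m=k$ (resp. $m=k-1$) bounds the first ratio below by $1+\tfrac{1}{k(k+1)}$ and the second below by $1+\tfrac{2}{k(k+1)}$, both of which are $>1$ for every $k\ge 1$. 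This gives the strict log-convexity, and then the limit argument closes the proof: for each $k$ there is $N(k)$ beyond which $\tfq_{n,k}^2<\tfq_{n,k-1}\tfq_{n,k+1}$ and $\tfp_{n,k}^2<\tfp_{n,k-1}\tfp_{n,k+1}$.

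There is no serious obstacle; the only place where genuine content enters is the strict log-convexity of $c_k$ and $d_k$, and even that reduces to one application of Bernoulli's inequality once the ratios are written in the displayed product form. The one point requiring a little care is the boundary case $k=1$, which involves $c_0$ and $d_0$: the formulas $c_k=(k+1)^{k-1}/k!$ and $d_k=(k+1)^{k-2}/k!$ are read at $k=0$ as $c_0=d_0=1$, and with that convention the Bernoulli estimate (with $m=1$, resp. $m=0$) still yields a ratio strictly larger than $1$.
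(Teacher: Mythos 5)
Your argument is correct and is essentially the paper's: both pass to the limit $n\to\infty$ via Theorems~\ref{thm:asymp.q_k} and \ref{thm:asymp.p_k}, reduce to the same explicit ratio $\frac{k+1}{k}\bigl(1-\frac{1}{(k+1)^2}\bigr)^{k}$ (resp.\ exponent $k-1$), and then verify it exceeds $1$ by an elementary estimate. The only cosmetic difference is that the paper bounds the reciprocal ratio above by $1$ using $\exp(x)<\frac{1}{1-x}$, whereas you bound the ratio below by $1$ using Bernoulli's inequality.
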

\begin{proof}
	From 
	Theorem~\ref{thm:asymp.q_k}, we have 
	\[\begin{split}
		\lim_{n\to \infty}\frac{\tfq_{n,k}^2}{\tfq_{n,k-1}\tfq_{n,k+1}}&=\frac{(k+1)^{2k-1}}{k^{k-1}(k+2)^{k}}=\frac{k}{k+1}\left(1+\frac{1}{k^2+2k}\right)^{k}\\
		&<\frac{k}{k+1}\exp\left(\frac{1}{k+2}\right)<1,
	\end{split}
	\]
	where the last inequality holds by the well-known inequality $\exp(x)<\frac{1}{1-x}$ for $0<x<1$. Similarly, from Theorem~\ref{thm:asymp.p_k}, we have 
	\[\begin{split}
		\lim_{n\to \infty}\frac{\tfp_{n,k}^2}{\tfp_{n,k-1}\tfp_{n,k+1}}&=\frac{(k+1)^{2k-3}}{k^{k-2}(k+2)^{k-1}}=\frac{k}{k+1}\left(1+\frac{1}{k^2+2k}\right)^{k-1}\\
		&<\frac{k}{k+1}\exp\left(\frac{k-1}{k^2+2k}\right)<1
	\end{split}
	\]
	where the last inequality holds because $\exp\left(\frac{k-1}{k^2+2k}\right)<\frac{k^2+2k}{k^2+k+1}<\frac{k+1}{k}$.
\end{proof}

\subsection{Multiplicities of other irreducible representations}
We end this section with a discussion of the asymptotic behavior of the multiplicities of other irreducible representations in $P_{n,k}$ and $Q_{n,k}$. 
For $F\in \Lambda_n$ and $\lambda \vdash n$, we denote by $\mult_{\lambda}(F)$ the coefficient of $\sfs_\lambda$ in the Schur expansion of $F$. 

By Theorem~\ref{thm:asymp.q_k}, the functions $\fp_{n,k}$ and $\fq_{n,k}=\lvert \sT_{n,k}\rvert$ show polynomial growth as $n$ increases. Similar phenomena occur for $\mult_\lambda(P_{n,k})$ and $\mult_\lambda(Q_{n,k})$ as well. To state this precisely, we employ the following notations.

For $n\geq \lvert \lambda\rvert+\lambda_1$, we define the \emph{padded partition} (\cite[\S2.1]{CF})
\[\lambda[n]:=(n-\lvert\lambda\rvert,~\lambda_1,~\cdots,~\lambda_\ell).\]
For example, when $\lambda=(1)$, $\lambda[n]=(n-1,1)$ is the partition corresponding to the standard representation of $\symS_n$. For a partition $\lambda:=(\lambda_1,\cdots, \lambda_\ell)$, we write $\lvert \lambda\rvert:=\sum_{i}\lambda_i$ and $\lambda!:=\prod_{i=1}^\ell \lambda_i!$.

\begin{theorem}\label{thm:bound}
	Let $\lambda$ be a partition, and let $k\geq0$. Then the functions $n\mapsto \mult_{\lambda[n]}(P_{n,k})$ and  $n\mapsto \mult_{\lambda[n]}(Q_{n,k})$ are bounded by polynomials of degree $\lvert \lambda\rvert+k$. Moreover,
	\[\limsup_{n\to\infty}\frac{\mult_{\lambda[n]}(P_{n,k})}{n^{\lvert \lambda \rvert +k}}\leq \limsup_{n\to\infty}\frac{\mult_{\lambda[n]}(Q_{n,k})}{n^{\lvert \lambda \rvert +k}}\leq \frac{(k+1)^{k-1}}{\lambda! (k!)^2}.\]
\end{theorem}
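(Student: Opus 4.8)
The plan is to reduce everything to the combinatorial formula $Q_{n,k}=\sum_{(T,w)\in\sT_{n,k}}\ch_{\symS_n}(U_{(T,w)})$ of Proposition~\ref{prop:Q.wrt}, stratify $\sT_{n,k}$ by the underlying inputless tree as in \eqref{eq:strat.Tnk}, and bound each $\mult_{\lambda[n]}(\ch_{\symS_n}(U_{(T,w)}))$ uniformly. First I would recall that for a fixed inputless weighted tree $T\in\overline\sT_k$ with $m=|V(T)|\le k+1$ vertices, attaching $n$ inputs means distributing them among the vertices; the resulting permutation module $U_{(T,w)}$ is induced from a Young-type subgroup $\symS_{\mathbf a}\rtimes\Aut(T)$ where $\mathbf a=(a_v)_{v\in V(T)}$ records the number of inputs at each vertex (up to the residual symmetry $\Aut(T)$ of the inputless tree). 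Ignoring $\Aut(T)$ only increases the count, so it suffices to bound $\mult_{\lambda[n]}(\sfh_{\mathbf a})$ summed over all compositions $\mathbf a$ of $n$ into $m$ nonnegative parts meeting the valency constraints \eqref{eq:cond.inputs}.

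The key estimate is: for a partition $\mu\vdash n$ with at most $m$ parts, $\mult_{\lambda[n]}(\sfh_\mu)$ is the Kostka number $K_{\lambda[n],\mu}$, which counts semistandard Young tableaux of shape $\lambda[n]$ and content $\mu$. Since $\lambda[n]$ has a long first row of length $n-|\lambda|$ and a fixed ``tail'' $\lambda$ of total size $|\lambda|$, such a tableau is determined by how the $|\lambda|$ boxes of the tail are filled — a bounded amount of data depending only on $\lambda$, not on $n$ — so $K_{\lambda[n],\mu}$ is bounded by a constant $C_\lambda$ independent of $n$ and of $\mu$ (as long as $\ell(\mu)$ is bounded). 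More precisely, one gets $\sum_{\mu}K_{\lambda[n],\mu}\le$ (number of content vectors reachable) and the refined leading-order count will come from tracking which tableaux actually occur. Then $\mult_{\lambda[n]}(Q_{n,k})\le\sum_{T\in\overline\sT_k}\sum_{\mathbf a}C_\lambda$, and the number of valid $\mathbf a$ is $O(n^{m-1})\le O(n^k)$ per tree, with finitely many trees, giving the polynomial bound of degree — wait, we need degree $|\lambda|+k$, so the constant $C_\lambda$ must be replaced by something of size $O(n^{|\lambda|})$: indeed $K_{\lambda[n],\mathbf a}$ where $\mathbf a$ has a \emph{large} part $a_{v_0}\approx n$ behaves like a polynomial in $n$ of degree $|\lambda|$ (the tail boxes can be filled using entries from the large rows/columns in $\sim n^{|\lambda|}$ ways). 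This is the heart of the matter and I would isolate it as a lemma: for $\mathbf a=(a_1,\dots,a_m)$ with $a_1=n-O(1)$ and $a_2,\dots,a_m$ bounded, $\mult_{\lambda[n]}(\sfh_{\mathbf a})=\frac{1}{\lambda!}\prod\binom{\cdots}{\cdots}\sim(\text{const})\,n^{|\lambda|}$, or at any rate $O(n^{|\lambda|})$, with the constant summing to $1/\lambda!$ after accounting for all distributions.

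For the leading coefficient: combining the $O(n^{|\lambda|})$ bound on $\mult_{\lambda[n]}(\sfh_{\mathbf a})$ per distribution with the $O(n^{m-1})$ count of distributions, only trees with $m=k+1$ contribute to order $n^{|\lambda|+k}$ (exactly as in Lemma~\ref{lem:fq_T}), these being trees all of whose non-root vertices have weight $1$. For such a tree the $\Aut(T)$ division and the $k!$ labelings reproduce Cayley's count $(k+1)^{k-1}$ exactly as in the proof of Theorem~\ref{thm:asymp.q_k}; the extra factor $1/\lambda!$ and the extra power $n^{|\lambda|}$ come from the refined Kostka asymptotics of the previous step (the $\lambda!$ is precisely the symmetry of filling $\lambda$'s rows with entries drawn from the unique ``large'' content class). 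Assembling: $\limsup \mult_{\lambda[n]}(Q_{n,k})/n^{|\lambda|+k}\le \frac{1}{\lambda!}\cdot\frac{1}{k!}\cdot\frac{(k+1)^{k-1}}{k!}=\frac{(k+1)^{k-1}}{\lambda!(k!)^2}$, where the $1/k!$ is the volume factor $\int$ from counting lattice points (as in Lemma~\ref{lem:fq_T}) and the other $1/k!$ comes from $k!$ labelings in Cayley's formula. Finally the inequality $\mult_{\lambda[n]}(P_{n,k})\le\mult_{\lambda[n]}(Q_{n,k})$ follows from the wall-crossing formula \eqref{eq:CKL}: $(1+t)P_n=Q_n-t(\sum Q_iQ_{n-i}+\sfs_{(1,1)}\circ Q_{n/2})$, so $P_{n,k}+P_{n,k-1}=Q_{n,k}-(\text{Schur-positive})$, whence $\mult_{\lambda[n]}(P_{n,k})\le\mult_{\lambda[n]}(P_{n,k})+\mult_{\lambda[n]}(P_{n,k-1})\le\mult_{\lambda[n]}(Q_{n,k})$ since all terms are nonnegative (every $\mult_{\lambda[n]}(P_{n,j})\ge0$ as $P_{n,j}$ is an honest representation).

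The main obstacle I anticipate is the refined Kostka-number asymptotics: proving not just that $\mult_{\lambda[n]}(\sfh_{\mathbf a})=O(n^{|\lambda|})$ uniformly over the relevant distributions $\mathbf a$, but pinning the precise leading constant so that it sums to $1/\lambda!$ over all distributions compatible with a fixed weight-$1$ tree. One clean way to organize this: use the explicit formula $\ch_{\symS_n}(U_{(T,w)})=\sfh_{a_{v_0}}\cdot\prod_{v\ne v_0}\sfh_{a_v}$ (a product of complete homogeneous functions, since each vertex contributes a trivial-representation factor on its inputs), expand $\mult_{\lambda[n]}(\sfh_{a_{v_0}}\sfh_{\text{rest}})$ via the dual Jacobi–Trudi / Pieri iteration peeling off the small parts $a_v$ one at a time, and observe each Pieri step multiplies by a polynomial-in-$n$ factor of controlled degree; alternatively invoke the ``representation stability'' language of \cite{CF} (padded partitions), under which $\mult_{\lambda[n]}$ of a fixed-degree expression is eventually polynomial in $n$ — but I would prefer the self-contained Pieri computation to keep the leading-constant bookkeeping transparent.
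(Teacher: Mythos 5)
Your reduction of $P$ to $Q$ is fine: from \eqref{eq:CKL} one gets $P_{n,k}+P_{n,k-1}=Q_{n,k}-(\text{Schur-positive})$, and since $P_{n,k-1}$ is itself Schur-positive this yields $\mult_{\lambda[n]}(P_{n,k})\leq \mult_{\lambda[n]}(Q_{n,k})$. (The paper argues instead that the pullback $H^{2k}(\Mbar_{0,n})\to H^{2k}(\Mbar_{0,n+1})$ is an $\symS_n$-equivariant injection because the forgetful map has a section; both routes are valid.)

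The problem is the lemma you yourself flag as "the heart of the matter." Your first instinct was right and your correction is wrong: for a content $\mu\vdash n$ with at most $k+1$ parts, the Kostka number $K_{\lambda[n],\mu}=\mult_{\lambda[n]}(\sfh_\mu)$ is bounded by $(k+1)^{\lvert\lambda\rvert}$, \emph{independently of $n$}. An SSYT of shape $\lambda[n]$ is determined by the filling of the $\lvert\lambda\rvert$ boxes below the first row, and each such box carries a \emph{value} in $\{1,\dots,\ell(\mu)\}$; the entries of a tableau with content $\mu$ are not distinguishable objects, so a large part $\mu_1\approx n$ does not produce $\sim n^{\lvert\lambda\rvert}$ fillings of the tail. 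Consequently your claimed asymptotic $K_{\lambda[n],\mathbf a}\sim(\mathrm{const})\,n^{\lvert\lambda\rvert}$ is false, and the assembly of the leading constant $\tfrac{1}{\lambda!}\cdot\tfrac{1}{k!}\cdot\tfrac{(k+1)^{k-1}}{k!}$ rests on it. The factor $n^{\lvert\lambda\rvert}/\lambda!$ in the paper comes from a completely different (and much cruder) source: each $U_T$ is a transitive permutation representation, hence a subrepresentation of the regular representation, so $\mult_{\lambda[n]}(U_T)\leq\dim S^{\lambda[n]}\leq\dim M^{\lambda[n]}=n^{\lvert\lambda\rvert}/\lambda!+o(n^{\lvert\lambda\rvert})$, and multiplying by $\lvert\sT_{n,k}\rvert=\fq_{n,k}\sim\frac{(k+1)^{k-1}}{(k!)^2}n^k$ finishes the proof in one line. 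Ironically, if you replace your false lemma by the correct bound $K_{\lambda[n],\mu}\leq(k+1)^{\lvert\lambda\rvert}$ (using $U_T\subset M^{\mu(\mathbf a)}$ via Lemma~\ref{lem:Schur.pos.perm.rep}), your stratification yields the \emph{stronger} estimate $\mult_{\lambda[n]}(Q_{n,k})=O(n^k)$ — which still implies the stated theorem and is consistent with Remark~\ref{rem:bound.not.sharp} on the non-sharpness of the degree $\lvert\lambda\rvert+k$ — but as written the proposal does not establish the claimed limsup inequality by a correct argument.
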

\begin{proof}
It suffices to prove the assertions on $Q_{n,k}$, because $Q_{n,k}-P_{n,k}$ is Schur-positive. Indeed, since the forgetful morphism $\Mbar_{0,n+1}\to \Mbar_{0,n}$ is $\symS_n$-equivariant and admits a section, the pullback map $H^{2k}(\Mbar_{0,n})\to H^{2k}(\Mbar_{0,n+1})$ is $\symS_n$-equivariant and injective.

By Proposition~\ref{prop:Q.wrt}, $\mult_{\lambda[n]}(Q_{n,k})$ is the sum of $\mult_{\lambda[n]}(U_T)$ over all $T\in \sT_{n,k}$. Since $U_T$ is a subrepresentation of the regular representation, $\mult_{\lambda[n]}(U_T)\leq \mult_{\lambda[n]}(M^{(1^n)})=\dim S^{\lambda[n]}$ (see \eqref{eq:reg.rep}). Thus, \[\mult_{\lambda[n]}(Q_{n,k})\leq \dim S^{\lambda[n]}\cdot \lvert \sT_{n,k}\rvert.\]

    Since $S^{\lambda[n]}\subset M^{\lambda[n]}$ and the dimension of $M^{\lambda[n]}$ is equal to the multinomial coefficient
	$\frac{n!}{\lambda[n]!}=\frac{n^{\lvert \lambda \rvert }}{\lambda!}+o(n^{\lvert\lambda\rvert})$, we have
	$\limsup_{n\to \infty}\frac{\dim S^{\lambda[n]}}{n^{\lvert\lambda\rvert}}\leq \frac{1}{\lambda!}$. Hence, the assertions immediately follow from Theorem~\ref{thm:asymp.q_k}.
\end{proof}

\begin{remark}\label{rem:bound.not.sharp}
	The degree $\lvert \lambda \rvert+k$ of the polynomial bounds in the above theorem is not sharp. For example, let $\lambda=(a)$ so that $\lambda[n]=(n-a,a)$. When $k=1$, we have
	\[\mult_{\lambda[n]}(Q_{n,1})=n-a-\max\{a,3\}+1=n+o(n)\]
	which is bounded by a linear polynomial,
	while $\lvert \lambda \rvert =a$ can be arbitrarily large. Similarly, one can check that $\mult_{\lambda[n]}(P_{n,1})=\frac{n}{2}+o(n)$.
\end{remark}

\begin{remark}
There is another interesting fact that can be derived from the combinatorial formula \eqref{eq:Qnk}. Namely, $\sfs_\lambda$ with $\ell(\lambda)> k+1$ or $\lambda_1<3$ does not appear in the Schur expansion of $Q_{n,k}$ (and hence in that of $P_{n,k}$). This is essentially because a weighted rooted tree $T\in \sT_{n,k}$ has at most $k+1$ vertices where inputs can be attached, and it has at least one vertex $v$ with at least three inputs.

We do not include a full proof of this fact, as a more general statement can be found in \cite[Theorem~4.2]{FP} and \cite[Theorem~5.1 and Corollary~5.2]{BM}.
\end{remark}

\end{document}